\newcommand{\modG}{\vert G\vert_{\Theta}}
\newcommand{\toC}[2]{
	\ifthenelse{\isempty{#1}}
		{\varphi_{#2}}
		{\varphi_{#2}(#1)}
}
\newcommand{\invLim}{\varprojlim}
\newcommand{\cb}{\mathfrak{c}}
\newcommand{\rest}{\upharpoonright}
\newcommand{\rise}{\downharpoonright}
\newcommand{\subclosureIn}[2]{{\normalfont\text{cl}}{}_{#2}\,(\,#1\,)}
\DeclareMathOperator{\medcup}{\mathsmaller{\bigcup}}
\DeclareMathOperator{\Gsim}{\sim}
\DeclareMathOperator{\bGsim}{\sim_\beta}
\DeclareMathOperator{\G*sim}{\approx_\ast}
\DeclareMathOperator{\Gxsim}{\approx_\times}
\newcommand{\ulim}[1]{{#1}\text{-}{\lim}}
\newcommand{\uI}{\mathbb{I}}
\renewcommand{\subset}{\subseteq}
\renewcommand{\supset}{\supseteq}
\def\comp{com\-pac\-ti\-fi\-ca\-tion}
\def\HD{Haus\-dorff}
\def\HDcomp{\HD\ \comp}
\def\SC{Stone-Čech}
\newcommand{ \M } { \mathbb{M} }
\newcommand{ \N } { \mathbb{N} }
\newcommand{\Hstar}{\mathbb{H}^*}
\newcommand{\parentheses}[1]{{\left( {#1} \right)}}
\newcommand{\sequence}[1]{{\langle {#1} \rangle}}
\newcommand{\p}{\parentheses}
\newcommand{\of}{\parentheses}
\newcommand{\closure}[1]{\overline{#1}}
\newcommand{\closureIn}[2]{\closure{#1}^{#2}}
\newcommand{\Set}[1]{{\left\lbrace {#1} \right\rbrace}}
\newcommand{\singleton}{\Set}
\def\set#1:#2{\Set{{#1} \colon {#2}}}
\def\sequence#1:#2{\left \langle #1 \; \colon \; #2 \right \rangle}
\def\calCommandfactory#1{%
   \expandafter\def\csname c#1\endcsname{\mathcal{#1}}}
\def\frakCommandfactory#1{%
   \expandafter\def\csname frak#1\endcsname{\mathfrak{#1}}}
\newcounter{ctr}
  \edef\X{\@Alph\c@ctr}
  \edef\Y{\@alph\c@ctr}
\renewcommand{\cC}{\mathscr{C}}
\renewcommand{\cK}{\mathscr{K}}
\renewcommand{\cP}{\mathscr{P}}
\newtheorem{theorem}{Theorem}[section]
\newtheorem{mainTheorem}{Theorem}
\newtheorem{proposition}[theorem]{Proposition}
\newtheorem{corollary}[theorem]{Corollary}
\newtheorem{lemma}[theorem]{Lemma}
\newtheorem{lemmaDefinition}[theorem]{Lemma and Definition}
\newtheorem{obs}[theorem]{Observation}
\newenvironment{customthm}[1]
  {\innercustomthm}
  {\endinnercustomthm}
\theoremstyle{definition}
\newtheorem{example}[theorem]{Example}
\newtheorem{fact}[theorem]{Fact}
\newtheorem{definition}[theorem]{Definition}
\theoremstyle{remark}
\newtheorem*{notation}{Notation}
\newif\ifshow
\def\lowfwd #1#2#3{{\mathop{\kern0pt #1}\limits^{\kern#2pt\raise.#3ex
\vbox to 0pt{\hbox{$\scriptscriptstyle\rightarrow$}\vss}}}}
\def\lowbkwd #1#2#3{{\mathop{\kern0pt #1}\limits^{\kern#2pt\raise.#3ex
\vbox to 0pt{\hbox{$\scriptscriptstyle\leftarrow$}\vss}}}}
\def\vS{{\vec S}}
\def\vSdash{{\mathop{\kern0pt S\lower-1pt\hbox{${}
     \scriptstyle'$}}\limits^{\kern2pt\raise.1ex
     \vbox to 0pt{\hbox{$\scriptscriptstyle\rightarrow$}\vss}}}}
\begin{document}

\title[Tangles and the Stone-Čech compactification of infinite graphs]{Tangles and the Stone-Čech compactification\\of infinite graphs}


\author{Jan Kurkofka}
\address{University of Hamburg, Department of Mathematics, Bundesstraße 55 (Geomatikum), 20146 Hamburg, Germany}
\email{jan.kurkofka@uni-hamburg.de, max.pitz@uni-hamburg.de}

\author{Max Pitz}

\keywords{infinite graph; Stone-Čech compactification; end; tangle}

\subjclass[2010]{54D35, 05C63}

\begin{abstract}
We show that the tangle space of a graph, which compactifies it, is a quotient of its \SC\ remainder obtained by contracting the connected components.
%
\end{abstract}
\vspace*{-2.7cm}
\maketitle

\vspace*{-.7cm}
\section{Introduction}

Every locally finite connected graph can be naturally compactified by its ends to form its well-known end \comp, see e.g.~\cite[\S 8.6]{Bible}.
For graphs that are not locally finite, however, adding their ends no longer suffices to compactify them, and it has been a longstanding quest to decide what other `points at infinity' besides the ends should be added to obtain a \comp , see e.g.~Cartwright, Soardi and Woess~\cite{Woess93} and Polat~\cite{Polat90}.

Recently, Diestel~\cite{EndsAndTangles} proposed a solution to this problem employing Robertson and Seymour's notion of a tangle~\cite{GMX}, which naturally generalises the end \comp\ (using the terminology from~\cite[§12.5]{Bible}): First, he observed that an end $\omega$ of a graph $G$ orients every finite-order separation $\{A,B\}$ of $G$ towards the side that contains a tail from every ray in $\omega$; and since these orientations for distinct separations are consistent in a number of ways, every end naturally induces an infinite-order tangle of $G$ in this way.
Diestel then proceeded to show that, conversely, every infinite-order tangle of a locally finite connected graph $G$ is defined by an end in this way.
Thus, if $G$ is locally finite and connected, there is a canonical bijection between its infinite-order tangles and its ends.

Finally, Diestel showed that \emph{every} graph, in particular also the non-locally finite ones, is compactified by its infinite-order tangles in much the same way as the ends of a locally finite connected graph compactify it in its end-compactification. The arising \emph{tangle \comp } coincides with the end \comp\ if $G$ is locally finite and connected. 
Hence, for the tangle compactification, it is precisely those infinite-order tangles not corresponding to an end which need to be added as points at infinity besides the ends in order to compactify the graph.

Diestel concludes his paper with the question of how the tangle \comp\ of an infinite graph relates to its \SC\ \comp~\cite[§6]{EndsAndTangles}.
Indeed, it is well-known that the end \comp\ of a locally finite connected graph $G$ can be described naturally in terms of its \SC\ \comp, namely, it is the quotient obtained by collapsing each connected component of the \SC\ remainder to a single point, see e.g.~\cite[\S VI.3]{aarts1993dimension}.
As our main result, we show that this correspondence extends to all graphs when ends are generalised to tangles. Hence, even though Diestel's reasoning and motivation behind the tangle \comp\ was purely combinatorial, it naturally happens to generalise the end \comp\ also in this second, more topological aspect.

\begin{mainTheorem}\label{mainresult2}
The tangle \comp\ of any graph $G$ is obtained from its \SC\ \comp\ $\beta G$ by first declaring $G$ to be open\footnote{When $G$ is locally compact, it is automatically open in $\beta G$, and so this step is redundant for locally finite graphs.} in $\beta G$ and then collapsing each connected component of the \SC\ remainder to a single point.
\end{mainTheorem}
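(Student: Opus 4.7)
The approach is to use the universal property of the \SC\ \comp\ to construct a continuous surjection $\pi : \beta G \to \modG$ extending the identity on $G$, and then to verify that its fibre equivalence matches the equivalence described in the theorem after declaring $G$ open in $\beta G$. Since $\modG$ is compact Hausdorff and $G$ is a Tychonoff space embedded densely in $\modG$ by Diestel's construction, the universal property yields a unique continuous extension $\pi : \beta G \to \modG$ of the inclusion $G \hookrightarrow \modG$; this $\pi$ is surjective (image compact and dense) and closed (continuous between compact Hausdorff spaces), hence a quotient map, so $\modG$ is homeomorphic to $\beta G$ quotiented by the fibre equivalence of $\pi$. The theorem is therefore reduced to identifying the equivalence classes as the singletons of $G$ together with the connected components of the remainder $R := \beta G \setminus G$.

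I would first verify $\pi^{-1}(v) = \{v\}$ for each $v \in G$ by noting that each point of $G \subseteq \modG$ has a neighborhood basis in $\modG$ whose pullbacks force any convergent $z$-ultrafilter on $G$ to be the principal one at $v$. This shows $\pi$ sends $R$ into the tangle set $\Theta(G) := \modG \setminus G$. Next, $\Theta(G)$ is totally disconnected in the topology inherited from $\modG$: for each finite-order separation $\{A,B\}$ the sets of tangles oriented to $A$ and to $B$ give a clopen partition of $\Theta(G)$, and the common refinement of these partitions over all such separations separates distinct tangles. Consequently, each component $C$ of $R$ has connected image $\pi(C) \subseteq \Theta(G)$, which must therefore be a single tangle $\tau_C$, giving a surjective assignment $C \mapsto \tau_C$ from components of $R$ onto $\Theta(G)$.

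The main obstacle is injectivity of $C \mapsto \tau_C$, equivalently that each fibre $\pi^{-1}(\tau)$ is connected and so lies in a single component of $R$. My plan is to write $\pi^{-1}(\tau) = \bigcap_B \closureIn{B^\ast}{\beta G}$ as a nested intersection of \SC\ closures of connected subsets of $G$, where $B$ ranges over the big sides of $\tau$ and $B^\ast$ denotes a canonical connected thick core of $B$ (for instance the subgraph on the union of the separator $A \cap B$ with the infinite components of $G[B \setminus (A \cap B)]$, which is connected and whose closure in $\beta G$ still captures the whole fibre). Tangle coherence --- in particular the axiom that no three small sides of $\tau$ cover $G$ --- ensures that the family $\{B^\ast\}$ is downward directed, so the corresponding $\beta G$-closures form a downward-directed family of continua in the compact Hausdorff space $\beta G$; their intersection is itself a continuum, giving $\pi^{-1}(\tau)$ connected. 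Combining these steps, the $\pi$-fibres are exactly the singletons $\{v\}$ for $v \in G$ and the connected components of $R$ for $\tau \in \Theta(G)$; refining $\beta G$ by declaring $G$ open matches the fact that $G$ is open in $\modG$, and the induced continuous bijection from the resulting quotient onto the compact Hausdorff $\modG$ is then a homeomorphism.
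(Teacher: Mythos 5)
Your opening move already fails for exactly the graphs that make this theorem non-trivial: the tangle \comp\ $\modG$ is \emph{not} \HD\ unless $G$ is locally finite, so the universal property of $\beta G$ (Theorem~\ref{thm_stonecechchar}~(ii), which requires a compact \emph{Hausdorff} target) cannot be invoked to extend $G\hookrightarrow\modG$ to a map $\pi\colon\beta G\to\modG$. Worse, no continuous extension exists at all when $G$ has a vertex $v$ of infinite degree: a basic $\modG$-neighbourhood $O=\bigcup_{e\in E(v)}[v,i_e)$ of $v$ lies entirely in $G$, so $\pi^{-1}(O)$ would have to avoid $G^\ast$, yet every $\beta G$-open set containing $v$ meets $G^\ast$, because $v=[0_U]$ for an ultrafilter $U$ concentrated on $E(v)$ and the set $P_U\setminus\{0_U,1_U\}\subset\check{\uI}_U\subset G^\ast$ is dense in $\uI_U$. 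This is precisely why the theorem says ``first declaring $G$ to be open'': that refinement is not a cosmetic remark to be appended at the end, it is what makes any map to $\modG$ continuous in the first place; and once the topology is refined, the domain is no longer $\beta G$, so you lose both the universal property and the compactness on which your closing ``continuous bijection from a compact space onto a compact Hausdorff space'' step relies (that step fails on both counts, since $\modG$ is not Hausdorff either).

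The fibre-connectedness argument also has gaps. The ``thick core'' $B^\ast$ need not be connected (take a separator $X=A\cap B$ of size two whose two vertices are joined only through a finite component that the core discards), so the sets $\closureIn{B^\ast}{\beta G}$ need not be continua; and even where they are, their intersection is not contained in $G^\ast$ --- by Proposition~\ref{componentGraphInteraction} it picks up the dominating vertices of an end, respectively the critical vertex set $X_\tau$ of an ultrafilter tangle, and deleting these points of $G$ from the intersection could a priori disconnect it. The paper argues in the opposite direction: the Separating Lemma~\ref{SuperLemma} shows that any two components of $G^\ast$ are separated by a clopen bipartition $P_\ast$ induced by a finite-order separation, so the set $G^\ast\cap\bigcap_{\gamma}\closureIn{G[\tau,\gamma]}{\beta G}$, being a union of components that contains at most one of them, \emph{is} a single component (Theorem~\ref{tangleComponents}); no continuum argument is needed. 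The paper also runs all of its topological machinery on the compact \HD\ spaces $\Theta\cong\invLim\beta(\cC_X)$ and the auxiliary remainder $G^\times$ rather than on $\modG$, and only at the very end builds the homeomorphism $\Psi$ onto $\modG$ by hand, verifying continuity and closedness directly (Lemmas~\ref{PsiCts} and~\ref{PsiClosed}). Your proposal would need to be restructured along these lines to go through.
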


This paper is organised as follows: First, in Section~\ref{sec2} we recall graph-theoretic background and provide a brief summary of Diestel's tangle compactification of an infinite graph. In Section~\ref{section_inversetangles}, we describe the remainder of the tangle compactification as an inverse limit of finite discrete spaces. In Section~\ref{sec4}, we provide the necessary background on the Stone-\v{C}ech compactification, and explain how the quotient relation defining the 1-complex $G$ can be used to describe the Stone-\v{C}ech compactification of an infinite graph as a `fake 1-complex' on standard intervals and \emph{non-standard} intervals (where the non-standard intervals are the standard subcontinua of the remainder of the positive half-line). Sections~\ref{sec5} and~\ref{sec6} contain the proof of our main theorem. We conclude this paper in Section~\ref{sec7} with three additional observations about the tangle compactification that might be of independent interest. In particular, we show that no compactification of a non-locally finite graph can both be Hausdorff and have a totally disconnected remainder.

\section{Reviewing Diestel's tangle compactification}
\label{sec2}

From now on, we fix an arbitrary connected simple infinite graph $G=(V,E)$.

\subsection{The 1-complex of a graph}
In the \emph{1-complex} of $G$ which we denote also by $G$, every edge $e=xy$ is a homeomorphic copy $[x,y]:=\{x\}\sqcup\mathring{e}\sqcup\{y\}$ of $\uI=[0,1]$ with $\mathring{e}$ corresponding to $(0,1)$ and points in $\mathring{e}$ being called \emph{inner edge points}. The space $[x,y]$ is called a \emph{topological edge}, but we refer to it simply as \emph{edge} and denote it by $e$ as well.
For each subcollection $F\subseteq E$ we write $\mathring{F}$ for the set $\bigsqcup_{e\in F}\mathring{e}$ of inner edge points of edges in $F$. By $E(v)$ we denote the set of edges incident with a vertex $v$.
The point set of $G$ is $V\sqcup\mathring{E}$, and an open neighbourhood basis of a vertex $v$ of $G$ is given by the unions $\bigcup_{e\in E(v)}[v,i_e)$ of half open intervals with each $i_e$ some inner edge point of $e$.
Note that the 1-complex of $G$ is (locally) compact if and only if the graph $G$ is (locally) finite, and also that the 1-complex fails to be first-countable at vertices of infinite degree.
Note that if the graph $G$ has no isolated vertices, then its 1-complex can be obtained from the disjoint sum $\bigoplus_{e\in E}\uI_e$ of copies $\uI_e$ of the unit interval by taking the quotient with respect to a suitable equivalence relation on $\bigoplus_{e\in E}\{0,1\}$.

\subsection{Combinatorial ends of graphs}
\label{SummaryEndsAndTangles}
Given a graph $G=(V,E)$ we write $\cX$ for the collection of all finite subsets of its vertex set $V$, partially ordered and directed by inclusion. A (combinatorial) \emph{end} of a graph is an equivalence class of rays, where a \emph{ray} is a 1-way infinite path, \cite{halin64}.
Two rays are \emph{equivalent} if for every $X\in\cX$ both have a subray (also called \emph{tail}) in the same component of $G-X$.
In particular, for every end $\omega$ of $G$ there is a unique component of $G-X$ in which every ray of $\omega$ has a tail, and we denote this component by $C(X,\omega)$.
Whenever we say end, we mean a combinatorial one.
The set of ends of a graph $G$ is denoted by $\Omega=\Omega(G)$. 
Further details on ends as well as any graph-theoretic notation not explained here can be found in Diestel's book~\cite{Bible}, especially in Chapter~8.

If $\omega$ is an end of $G$, then the components $C(X,\omega)$ are compatible in that they form elements of the inverse limit of the system $\{\cC_X,\cb_{X',X},\cX\}$ where $\cC_X$ is the set of components of $G-X$ and for $X' \supseteq X$, the bonding map $\cb_{X',X}\colon\cC_{X'}\to\cC_X$ sends each component of $G-X'$ to the unique component of $G-X$ including it.
Clearly, the inverse limit consists precisely of the directions of the graph: choice maps $f$ assigning to every $X\in\cX$ a component of $G-X$ such that $f(X')\subseteq f(X)$ whenever $X'\supseteq X$.
In 2010, Diestel and K\"uhn~\cite{Ends} showed that

\begin{theorem}[{\cite[Theorem 2.2]{Ends}}]\label{EndsAreDirections}
Let $G$ be any graph. Then there is a canonical bijection between the (combinatorial) ends of $G$ and its directions, i.e. $\Omega =\invLim{}\cC_X$.
\end{theorem}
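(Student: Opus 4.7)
The plan is to establish both directions of the bijection $\omega\mapsto f_\omega$ given by $f_\omega(X):=C(X,\omega)$. Injectivity is easy: if $\omega\neq\omega'$, then some representative rays $R\in\omega$, $R'\in\omega'$ are inequivalent, witnessed by a finite $X$ with tails in different components of $G-X$, so $f_\omega(X)\neq f_{\omega'}(X)$. The substance is surjectivity.

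Given a direction $f$, I first note that every $f(X)$ is infinite: otherwise, $X':=X\cup f(X)$ would force $f(X')\subseteq f(X)\cap(V\setminus X')=\emptyset$, a contradiction. I then build a ray $R=v_0v_1v_2\ldots$ with $v_n\in f(\{v_0,\ldots,v_{n-1}\})$ by greedy induction; this succeeds because in any connected graph $H$, each component of $H-v$ is adjacent to $v$, so applying this with $H=f(\{v_0,\ldots,v_{n-1}\})$ and the sub-component $f(\{v_0,\ldots,v_n\})\subseteq H-v_n$ yields a neighbour $v_{n+1}$ of $v_n$ inside $f(\{v_0,\ldots,v_n\})$.

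The main obstacle is the last step: showing that this ray has a tail in $f(X)$ for \emph{every} finite $X$, not only those absorbed by some initial segment $\{v_0,\ldots,v_{n-1}\}$. The fix is to enumerate a cofinal sequence $Y_1\subseteq Y_2\subseteq\ldots$ in $\cX$ (a countable substructure argument reduces the uncountable case) and to \emph{interleave} with the ray construction: at stage $n$, walk from the current endpoint $v_{k_{n-1}}$ through the connected subgraph $f(Y_{n-1}\cup V(P_{n-1})\setminus\{v_{k_{n-1}}\})$ to a vertex $v_{k_n}$ lying in the smaller sub-component $f(Y_n\cup V(P_n)\setminus\{v_{k_n}\})$. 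Such a walk exists by connectivity of the ambient direction component together with the infinitude of every $f(\,\cdot\,)$, and the connected-graph vertex-removal lemma again ensures that at each micro-step we can stay in the correct nested $f$-component. Once this invariant is installed, the tail of $R$ beyond stage $n$ stays in $f(Y_n)$, hence in $f(X)$ for every $X\subseteq Y_n$, which covers all finite $X$ and finishes the proof.
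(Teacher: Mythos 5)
The paper does not actually prove this statement --- it is quoted from Diestel and K\"uhn --- so I can only assess your argument on its own terms. Your injectivity argument, the observation that every $f(X)$ is infinite, and the greedy construction of a ray with $v_{n+1}\in f(\{v_0,\dots,v_n\})$ are all correct, and you have correctly located the real difficulty: the greedy ray need not represent $f$. (A concrete witness: let $G$ consist of a ray $r_0r_1r_2\dots$, a vertex $x$ joined to every $r_i$, and a second ray attached to $G$ only at $x$; for the direction $f$ pointing into the second ray one has $f(S)=G-S$ for every finite $S\subseteq\{r_0,r_1,\dots\}$, so $r_0r_1r_2\dots$ itself satisfies your greedy invariant yet its tail lies in the wrong component of $G-x$.)

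The gap is in your fix. A countable increasing sequence $Y_1\subseteq Y_2\subseteq\dots$ that is cofinal in $\cX$ exists if and only if $V(G)$ is countable: $\bigcup_n Y_n$ is countable, so for uncountable $V$ some finite $X$ lies below no $Y_n$, and your final step ``$X\subseteq Y_n$ for some $n$'' is unavailable. The parenthetical ``countable substructure argument'' is doing all the work here and cannot be taken for granted: if one passes to a countable subgraph $H$ closed under the obvious operations (connecting paths within the $f(X)$ for finite $X\subseteq V(H)$, witnesses of non-emptiness, etc.) and runs your interleaved construction inside $H$, one only learns that the tail of the resulting ray lies in $f(X)$ for finite $X\subseteq V(H)$; for a finite $X\not\subseteq V(H)$ the tail could a priori land in a component of $G-X$ other than $f(X)$. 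This is not a hypothetical worry: modify the example above by attaching \emph{uncountably many} vertices $x_\alpha$, each joined to all $r_i$ and each carrying its own private ray, and take $f$ pointing into the private ray at $x_\alpha$; a countable closure built from a starting vertex on $r_0r_1\dots$ need never pick up $x_\alpha$, and then the ray it produces has its tail in the wrong component of $G-x_\alpha$. So the reduction to the countable case is itself the hard part of the theorem, not a routine preliminary. A correct argument has to exploit the consistency of $f$ globally rather than along a pre-chosen chain --- for instance via the observation that $f(Y)\cap X=\emptyset$ forces $f(Y)\subseteq f(X)$ (because $\emptyset\neq f(X\cup Y)\subseteq f(X)\cap f(Y)$ and $f(Y)$ is connected and avoids $X$), which shows that any finite $X$ violating the conclusion must meet \emph{every} set $f(Y)$ used to steer the ray; turning this into a contradiction is precisely where the remaining work lies, and your sketch does not contain it. As written, the proposal proves the theorem only for countable graphs.
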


\subsection{Tangles}\label{tangleSection}
Next, we formally introduce tangles for a particular type of `separation system', referring the reader to~\cite{AbstractSepSys} for an overview of the full theory and its applications.
More precisely, we introduce a definition of $\aleph_0$-tangles provided by Diestel~\cite{EndsAndTangles} which, as he proved, is equivalent to the original one due to Robertson and Seymour~\cite{GMX}.
In the next subsection, however, we explain a third, equivalent viewpoint for tangles (due to Diestel), which describes $\aleph_0$-tangles as the elements of the compact \HD\ inverse limit $\invLim{}\beta (\cC_X)$ and which we take as our point of reference for the remainder of this paper.

A (\emph{finite order}) \emph{separation} of a graph $G$ is a set $\{A,B\}$ with $A\cap B$ finite and $A\cup B=V$ such that $G$ has no edge between $A\setminus B$ and $B\setminus A$.
The collection of all finite order separations is denoted by $S$.
The ordered pairs $(A,B)$ and $(B,A)$ are then called the \emph{orientations} of the separation $\{A,B\}$, or (\emph{oriented}) \emph{separations}.
Informally we think of $A$ and $B$ as the \emph{small side} and the \emph{big side} of $(A,B)$, respectively. 
Furthermore, we think of the separation $(A,B)$ as \emph{pointing towards} its big side $B$ and \emph{away from} its small side $A$.
We write $\vS$ for the collection of all oriented separations. 
A subset $O$ of $\vS$ is an \emph{orientation} if it contains precisely one of $(A,B)$ and $(B,A)$ for each separation $\{A,B\}\in S$.

We define a partial ordering $\le$ on $\vS$ by letting
\begin{align*}
(A,B)\le (C,D):\Leftrightarrow A\subseteq C\text{ and }B\supseteq D.
\end{align*}
Here, we informally think of the oriented separation $(A,B)$ as \emph{pointing towards} $\{C,D\}$ and its orientations, whereas we think of $(C,D)$ as \emph{pointing away from} $\{A,B\}$ and its orientations.
If $O$ is an orientation and no two distinct separations $(B,A)$ and $(C,D)$ in $O$ satisfy $(A,B)<(C,D)$, i.e., no two distinct separations in $O$ point away from each other, then we call $O$ \emph{consistent}.

We call a set $\sigma\subseteq\vS$ of oriented separations a \emph{star} if every two distinct separations $(A,B)$ and $(D,C)$ in $\sigma$ point towards each other, i.e. satisfy $(A,B)\le (C,D)$.
The \emph{interior} of a star $\sigma=\{\,(A_i,B_i)\mid i\in I\,\}$ is the intersection $\bigcap_{i\in I}B_i$ of the big sides.

\begin{definition}
An \emph{$\aleph_0$-tangle} (of $G$) is a consistent orientation of $S$ that contains no finite star of finite interior as a subset.
We write $\Theta$ for the set of all $\aleph_0$-tangles.
\end{definition}

\subsection{Ends and Tangles}

If $\omega$ is an end of $G$, then letting
\begin{align*}
\tau_\omega:=\{\,(A,B)\in\vS\mid C(A\cap B,\omega)\subseteq G[B\setminus A]\,\}
\end{align*}
defines an injection $\Omega\hookrightarrow\Theta,\;\omega\mapsto\tau_\omega$ from the ends of $G$ into the $\aleph_0$-tangles. Therefore, we call the tangles of the form $\tau_\omega$ the \emph{end tangles} of $G$.
By abuse of notation we write $\Omega$ for the collection of all end tangles of $G$, so we have $\Omega\subseteq\Theta$.

In order to understand the $\aleph_0$-tangles that are not ends, Diestel studied an inverse limit description of $\Theta$ which we introduce in a moment.
First, we note that every finite order separation $\{A,B\}$ corresponds to the bipartition $\{\cC,\cC'\}$ of the component space $\cC_X$ with $X=A\cap B$ and
\begin{align*}
\{A,B\}=\big\{\,V[\cC]\cup X\,,\,X\cup V[\cC']\,\big\}
\end{align*}
where $V[\cC]=\bigcup_{C\in\cC}V(C)$,
and this correspondence is bijective for fixed $X\in\cX$. 
For all $\cC\subseteq\cC_X$ let us write $s_{X\to\cC}$ for the separation $(V\setminus V[\cC]\,,\,X\cup V[\cC])$. Hence if $\tau$ is an $\aleph_0$-tangle of the graph, then for each $X\in\cX$ it also chooses one \emph{big side} from each bipartition $\{\cC,\cC'\}$ of $\cC_X$, namely the $\cK\in\{\cC,\cC'\}$ with $s_{X\to\cK}\in\tau$. 
Since it chooses theses sides consistently, it induces an ultrafilter $U(\tau,X)$ on $\cC_X$, one for every $X\in\cX$, which is given by
\begin{align*}
U(\tau,X)=\{\,\cC\subseteq\cC_X\mid s_{X\to\cC}\in\tau\,\},
\end{align*}
and these ultrafilters are compatible in that they form a limit of the inverse system $\{\,\beta(\cC_X)\,,\,\beta(\cb_{X',X})\,,\,\cX\,\}$.
Here, each set $\cC_X$ is endowed with the discrete topology and $\beta(\cC_X)$ denotes its \SC\ \comp .
Every bonding map $\beta(\cb_{X',X})$ is the unique continuous extension of $\cb_{X',X}$ that is provided by the \SC\ property (see Theorem~\ref{thm_stonecechchar}~(ii)).
As one of his main results, Diestel showed that the map
\begin{align*}
\tau\mapsto (\,U(\tau,X)\mid X\in\cX\,)
\end{align*}
defines a bijection between the tangle set $\Theta$ and the inverse limit $\invLim{}\beta (\cC_X)$.
From now on, we view the tangle space $\Theta$ as the compact \HD\ space $\invLim{}\beta (\cC_X)$.

In his paper, Diestel moreover showed that the ends of $G$ are precisely those $\aleph_0$-tangles whose induced ultrafilters are all principal.
For every $\aleph_0$-tangle $\tau$ we write $\cX_\tau$ for the collection of all $X\in\cX$ for which the induced ultrafilter $U(\tau,X)$ is free. The set $\cX_\tau$ is empty if and only if $\tau$ is an end tangle; an $\aleph_0$-tangle $\tau$ with $\cX_\tau$ non-empty is called an \emph{ultrafilter tangle}. For every ultrafilter tangle $\tau$ the set $\cX_\tau$ has a least element $X_\tau$ of which it is the up-closure. We characterised the sets of the form $X_\tau$ combinatorially in \cite[Theorem~4.10]{EndsTanglesCrit}: they are precisely the \emph{critical} vertex sets of $G$, finite sets $X\subset V$ whose deletion leaves some infinitely many components each with neighbourhood precisely equal to $X$, and they can be used together with the ends to compactify the graph, \cite[Theorem~4.11]{EndsTanglesCrit}.

We conclude our summary of `Ends and tangles' with the formal construction of the tangle \comp .
To obtain the tangle \comp\ $\modG$ of a graph $G$ we extend the 1-complex of $G$ to a topological space $G\sqcup\Theta$ by declaring as open in addition to the open sets of $G$, for all $X\in\cX$ and all $\cC\subseteq\cC_X$, the sets
\begin{align*}
\cO_{\modG}(X,\cC):=\medcup\cC\cup\mathring{E}(X,\medcup\cC)\cup\{\,\tau\in\Theta\mid\cC\in U(\tau,X)\,\}
\end{align*}
and taking the topology this generates. Notably, $\modG$ contains $\Theta$ as a subspace.
\begin{theorem}[{\cite[Theorem 1]{EndsAndTangles}}]
\label{DiestelsTangleCompWorks}
Let $G$ be any graph, possibly disconnected.
\begin{enumerate}
\item $\modG$ is a \comp\ of $G$ with totally disconnected remainder.
\item If $G$ is locally finite and connected, then $\modG$ coincides with the Freudenthal \comp\ of $G$.
\end{enumerate}
\end{theorem}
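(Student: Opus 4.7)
The plan is to verify in turn that $\modG$ is Hausdorff, that $G$ is dense in $\modG$, that $\modG$ is compact, and that $\Theta$ is totally disconnected; then in part~(ii) to identify $\modG$ with the Freudenthal compactification in the locally finite connected case. Density is immediate: for every basic neighbourhood $\cO_{\modG}(X,\cC)$ of a tangle $\tau$ we have $\cC\in U(\tau,X)$, so $\cC$ is nonempty and $\medcup\cC$ contains a vertex of~$G$. Hausdorffness reduces to separating two distinct tangles $\tau\neq\tau'$: by the inverse limit description they disagree at some $X\in\cX$, and picking $\cC\subseteq\cC_X$ with $\cC\in U(\tau,X)$ and $\cC_X\setminus\cC\in U(\tau',X)$ yields disjoint basic opens $\cO_{\modG}(X,\cC)$ and $\cO_{\modG}(X,\cC_X\setminus\cC)$. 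The same bipartition shows that every set $\cO_{\modG}(X,\cC)\cap\Theta=\{\tau\in\Theta:\cC\in U(\tau,X)\}$ is clopen in $\Theta$, giving a separating clopen basis and hence total disconnectedness of~$\Theta$.

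The main obstacle is compactness. I would exploit that $\Theta=\invLim\beta(\cC_X)$ is already compact Hausdorff as an inverse limit of such spaces. Given an open cover $\cU$ of $\modG$, cover $\Theta$ by finitely many basic opens $\cO_{\modG}(X_i,\cC_i)$ ($i=1,\dots,n$), each contained in an element of $\cU$. Put $X:=\bigcup_i X_i$. The key lemma is that every infinite component of $G-X$ is \emph{absorbed} by some $\cC_j$ under the bonding map $\cb_{X,X_j}$, and only finitely many finite components of $G-X$ remain unabsorbed. Indeed, an infinite unabsorbed component $C$ supports the principal ultrafilter at $C$ in $\beta(\cC_X)$, while an infinite family of unabsorbed finite components supports a free ultrafilter on $\cC_X$ concentrating on them; in either case the ultrafilter extends via compactness of the inverse limit to a tangle escaping all the $\cO_{\modG}(X_j,\cC_j)$, contradicting the chosen subcover of $\Theta$. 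A direct case analysis on edges then confines the complement of $\bigcup_j\cO_{\modG}(X_j,\cC_j)$ in $\modG$ to the finite subgraph on $X$ together with the finitely many unabsorbed finite components and their attaching edges, which is a compact subcomplex and receives a finite subcover from~$\cU$.

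For part~(ii), assume $G$ is locally finite and connected. Then $\cC_X$ is finite for every $X\in\cX$ (every component of $G-X$ contains a neighbour of $X$, and $X$ has finite degree sum), so $\beta(\cC_X)=\cC_X$ is discrete and every $\aleph_0$-tangle is an end tangle; the map $\omega\mapsto\tau_\omega$ is a bijection $\Omega\to\Theta$. For each end $\omega$ and each $X$, the ultrafilter $U(\tau_\omega,X)$ is principal at $C(X,\omega)$, so the basic open neighbourhoods of $\tau_\omega$ in $\modG$ are generated by $\cO_{\modG}(X,\{C(X,\omega)\})=C(X,\omega)\cup\mathring{E}(X,C(X,\omega))\cup\{\tau_{\omega'}:C(X,\omega')=C(X,\omega)\}$, which match the standard basic neighbourhoods of $\omega$ in the Freudenthal end compactification $\VG$. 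The map $G\sqcup\Theta\to\VG$ given by the identity on $G$ and $\tau_\omega\mapsto\omega$ is thus a continuous bijection between compact Hausdorff spaces, hence a homeomorphism.
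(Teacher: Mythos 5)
The paper itself offers no proof of this statement; it is quoted verbatim from Diestel's \emph{Ends and tangles}, and your argument is essentially a reconstruction of the proof given there: density is immediate, compactness is obtained by covering the compact space $\Theta=\invLim\beta(\cC_X)$ with finitely many basic open sets $\cO_{\modG}(X_i,\cC_i)$ and showing that what they miss of $G$ is a finite subgraph, and total disconnectedness of the remainder follows from the clopen bipartitions $\{\tau:\cC\in U(\tau,X)\}\oplus\{\tau:\cC\notin U(\tau,X)\}$ of $\Theta$. These parts, and part~(ii), are sound. One phrase you should tighten is ``the ultrafilter extends via compactness of the inverse limit to a tangle'': the projections $\Theta\to\beta(\cC_X)$ are \emph{not} surjective in general (no tangle induces the principal ultrafilter at a finite component of $G-X$), so the extension must be argued for the two ultrafilters you actually use --- a principal one at an infinite component, and a free one concentrated on infinitely many components --- which is precisely Diestel's Lemma~3.7, the same lemma invoked in the proof of Theorem~\ref{ThetaInvLim} above.

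The genuine error is your opening claim that $\modG$ is Hausdorff, together with the purported reduction of Hausdorffness ``to separating two distinct tangles''. That reduction is invalid, and the claim itself is false whenever $G$ is not locally finite; the paper records this explicitly right after Theorem~\ref{DiestelsTangleCompWorks}. Concretely, let $v$ be a vertex of infinite degree and $\tau$ an ultrafilter tangle with $v\in X_\tau$ (for instance $v$ the centre of the infinite star). Every basic neighbourhood of $v$ in $\modG$ contains an initial segment of each edge at $v$, while every basic neighbourhood $\cO_{\modG}(X,\cC)$ of $\tau$ contains the interiors of the infinitely many edges in $E(X,\medcup\cC)$ incident with $v$; the two therefore always intersect, and $v$ and $\tau$ cannot be separated. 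Fortunately the theorem asserts only that $\modG$ is a compactification with totally disconnected remainder --- Hausdorffness of the whole space is not claimed --- so your proof survives once this step is deleted: what you actually verify, namely that distinct \emph{tangles} admit disjoint basic neighbourhoods, shows that the remainder $\Theta$ is Hausdorff, which is all the total-disconnectedness argument needs.
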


The tangle \comp\ is \HD\ if and only if $G$ is locally finite.
However, the subspace $\modG\setminus\mathring{E}$ is compact \HD .
Teegen~\cite{MTeegen} generalised the tangle \comp\ to topological spaces.

\section{Tangles as inverse limit of finite spaces}
\label{section_inversetangles}

The \SC\ \comp\ of a discrete space can be viewed as the inverse limit of all its finite partitions, where each finite partition carries the discrete topology.
In this section, we extend this fact to the tangle space.

We start by choosing the point set for our directed poset:
\begin{align*}
\Gamma:=\{\,(X,P)\mid X\in\cX\text{ and }P\text{ is a finite partition of }\cC_X\,\}.
\end{align*} 
\begin{notation}
If an element of $\Gamma$ is introduced just as $\gamma$, then we write $X(\gamma)$ and $P(\gamma)$ for the sets satisfying $(X(\gamma),P(\gamma))=\gamma$.
Given $X\subset X'\in\cX$ and a finite partition $P$ of $\cC_X$ we write $P\downharpoonright X'$ for the finite partition
\begin{align*}
\{\,\cb_{X',X}^{-1}(\cC)\mid\cC\in P\,\}\setminus\{\emptyset\}
\end{align*}
that $P$ induces on $\cC_{X'}$.
\end{notation}
Letting $(X,P)\le (Y,Q)$ whenever $X\subset Y$ and $Q$ refines $P\downharpoonright Y$ defines a directed partial ordering on $\Gamma$:

\begin{lemma}
\label{directed}
$(\Gamma,\le)$ is a directed poset.
\end{lemma}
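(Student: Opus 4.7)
The plan is to verify the three partial-order axioms and then directedness, the main conceptual point being that the operation $P\mapsto P\downharpoonright X'$ is functorial in $X'$ and preserves refinement.

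First I would handle the partial-order axioms. Reflexivity is immediate once one observes that $\cb_{X,X}=\id$, so $P\downharpoonright X=P$, and every partition refines itself. Antisymmetry follows because $(X,P)\le(Y,Q)$ and $(Y,Q)\le(X,P)$ force $X=Y$, whence $P\downharpoonright Y=P$ and $Q\downharpoonright X=Q$, so $P$ and $Q$ refine each other and must be equal. Transitivity is the one step that needs a genuine argument: given $(X_1,P_1)\le(X_2,P_2)\le(X_3,P_3)$, one has to show $P_3$ refines $P_1\downharpoonright X_3$. The key observation is that for $X\subset Y\subset Z$ the bonding maps compose as $\cb_{Z,X}=\cb_{Y,X}\circ\cb_{Z,Y}$, so preimages satisfy $(P_1\downharpoonright X_2)\downharpoonright X_3=P_1\downharpoonright X_3$; and since taking preimages under a function preserves refinement of partitions, $P_2$ refining $P_1\downharpoonright X_2$ implies $P_2\downharpoonright X_3$ refines $P_1\downharpoonright X_3$. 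Combining this with $P_3$ refining $P_2\downharpoonright X_3$ and the transitivity of refinement then yields the claim.

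For directedness, given $(X_1,P_1),(X_2,P_2)\in\Gamma$, I would set $Y:=X_1\cup X_2\in\cX$ and let
\[
Q:=\bigl\{\,\cK_1\cap\cK_2\;:\;\cK_1\in P_1\downharpoonright Y,\;\cK_2\in P_2\downharpoonright Y\,\bigr\}\setminus\{\emptyset\}.
\]
This is a finite partition of $\cC_Y$ since both $P_1\downharpoonright Y$ and $P_2\downharpoonright Y$ are. By construction $Q$ refines each $P_i\downharpoonright Y$, and together with $X_i\subset Y$ this gives $(X_i,P_i)\le(Y,Q)$ for $i=1,2$.

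I do not anticipate any serious obstacle here; the only point that merits care is verifying that $P\downharpoonright X'$ really is a partition of $\cC_{X'}$ (it is, because the $\cb_{X',X}$-preimages of the classes of $P$ cover $\cC_{X'}$ and are pairwise disjoint once the empty preimages are deleted) and that preimages preserve refinement. Both facts are routine consequences of $\cb_{X',X}$ being a function, and once they are noted the rest of the proof is a short bookkeeping exercise.
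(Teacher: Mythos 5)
Your proof is correct and follows essentially the same route as the paper: the paper likewise dismisses the poset axioms as straightforward and establishes directedness by passing to $X\cup Y$ and taking the coarsest common refinement of $P\downharpoonright(X\cup Y)$ and $Q\downharpoonright(X\cup Y)$, which is exactly your $Q$ of nonempty pairwise intersections. Your explicit verification of transitivity via composition of the bonding maps is just a spelled-out version of what the paper leaves to the reader.
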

\begin{proof}
Checking the poset properties is straightforward; we verify that it is
directed: Given any two elements $(X,P)$ and $(Y,Q)$ of $\Gamma$ let $R$ be the coarsest refinement of $P\downharpoonright (X\cup Y)$ and $Q\downharpoonright (X\cup Y)$. 
Then $(X,P),(Y,Q)\le (X\cup Y,R)\in\Gamma$.
\end{proof}

For a reason that will become clear in the proof of our next theorem, we consider a cofinal subset of $\Gamma$, namely
\begin{align*}
\Gamma':=\{\,\gamma\in\Gamma\mid \forall\,\cC\in P(\gamma)\colon V[\cC]\text{ is infinite}\,\}.
\end{align*}

\begin{lemma}
\label{Gamma'cofinal}
$\Gamma'$ is cofinal in $\Gamma$.
\end{lemma}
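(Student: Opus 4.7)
The plan is to prove cofinality by a direct construction: given any $(X,P) \in \Gamma$, I will explicitly exhibit some $(Y,Q) \in \Gamma'$ with $(X,P) \le (Y,Q)$. The natural idea is to absorb the offending vertex sets into the ground set $X$. Concretely, let $\cC_{\text{fin}}:= \{\cC \in P : V[\cC] \text{ finite}\}$ and set
\[
F := \bigcup_{\cC \in \cC_{\text{fin}}} V[\cC],
\]
which is a finite union of finite sets, hence finite. I then take $Y := X \cup F \in \cX$ and $Q := P \rise Y$, and claim that $(Y,Q)$ does the job.

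That $(X,P) \le (Y,Q)$ is immediate from the definition of $\le$, since $X \subset Y$ and $P \rise Y$ trivially refines itself. The content is to verify $(Y,Q) \in \Gamma'$, i.e.\ that every class of $P \rise Y$ has infinite vertex union. For this I would split according to whether a class $\cC \in P$ lies in $\cC_{\text{fin}}$ or not. If $\cC \in \cC_{\text{fin}}$, then $V[\cC] \subset F \subset Y$, so no component $C \in \cC$ retains a vertex in $G-Y$, giving $\cb_{Y,X}^{-1}(\cC) = \emptyset$; such classes are removed when forming $P \rise Y$ (by definition of $\rise$). If $V[\cC]$ is infinite, then since $\cC \subset \cC_X$ the set $V[\cC]$ is disjoint from $X$; it is also disjoint from $F$ because $F$ consists of vertices from other components of $G-X$. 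Hence $V[\cC] \setminus Y = V[\cC]$, and by the standard fact that each component of $G-Y$ sits inside a unique component of $G-X$, the vertex union of $\cb_{Y,X}^{-1}(\cC)$ equals $V[\cC] \setminus Y$, which remains infinite.

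I do not anticipate a serious obstacle here: the only tiny subtlety is remembering that $\rise$ discards empty blocks, which is precisely what allows the finite-vertex classes to vanish after passing to $Y$, rather than having to be partitioned further. If that normalisation had not been built into the definition of $\rise$, one would instead need to merge the vanishing class into one of the surviving infinite classes, which would technically force the argument to first check $P \setminus \cC_{\text{fin}}$ is nonempty (equivalently, that $G-X$ has at least one infinite-vertex union class, which is true because $G$ is an infinite connected graph and $X$ is finite). As stated, this complication is avoided.
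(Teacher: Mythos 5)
Your construction is exactly the paper's: absorb the finitely many finite vertex unions $V[\cC]$ into $X$ to form $X' = X \cup F$ and pass to the induced partition $P \rise X'$, whose empty blocks are discarded by definition. The proposal is correct and merely spells out the verification (vanishing of finite classes, preservation of infinite ones) that the paper leaves implicit.
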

\begin{proof}
Given $(X,P)\in\Gamma$ we put
\begin{align*}
X'=X\cup\medcup\{\,V[\cC]\mid\cC\in P\text{ with }V[\cC]\text{ finite}\,\}.
\end{align*}
Then $(X,P)\le (X',P\downharpoonright X')\in\Gamma'$ as desired.
\end{proof}

We aim to describe the tangle space as an inverse limit of finite \HD\ spaces.
For this, we choose $\Gamma$ as our directed poset, and for each $\gamma\in\Gamma$ we let $\cP_\gamma$ be the set $P(\gamma)$ endowed with the discrete topology.
Our bonding maps $f_{\gamma',\gamma}\colon \cP_{\gamma'}\to \cP_{\gamma}$ send each $\cC'\in\cP_{\gamma'}$ to the unique $\cC\in \cP_{\gamma}$ with $\cC'\rest X(\gamma)\subset\cC$.
Since the spaces $\cP_\gamma$ are compact \HD , so is their inverse limit
\begin{align*}
\cP:=\invLim\;(\,\cP_\gamma\mid \gamma\in\Gamma\,).
\end{align*}
By \cite[Corollary~2.5.11]{EngelkingBook} we may replace $\Gamma$ with its cofinal subset $\Gamma'$ without changing the inverse limit $\cP$, so we assume without loss of generality that $\Gamma=\Gamma'$.

\begin{notation}
If $\tau$ is an $\aleph_0$-tangle and $\gamma=(X,P)\in\Gamma$ is given, then we write $\cC(\tau,\gamma)$ for the unique partition class of $P$ that is contained in the ultrafilter $U(\tau,X)$.
\end{notation}

\begin{theorem}
\label{ThetaInvLim}
For any graph $G$, its tangle space is homeomorphic to the inverse limit $\cP$, i.e.\ $\Theta\cong\cP$.
\end{theorem}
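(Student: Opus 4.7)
My plan is to exhibit the natural map $\varphi\colon\Theta\to\cP$ sending a tangle $\tau$ to the compatible family $(\cC(\tau,\gamma))_{\gamma\in\Gamma}$, verify that it is a continuous bijection, and then invoke compactness. Since $\Theta$ and $\cP$ are both compact \HD\ (the latter by construction as an inverse limit of finite discrete spaces), a continuous bijection will automatically be a homeomorphism.

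First I would check well-definedness and continuity of $\varphi$. Given $\gamma=(X,P)\le\gamma'=(X',P')$ in $\Gamma$, the required identity $f_{\gamma',\gamma}(\cC(\tau,\gamma'))=\cC(\tau,\gamma)$ unfolds to the inclusion $\cC(\tau,\gamma')\subseteq\cb_{X',X}^{-1}(\cC(\tau,\gamma))$, which is immediate from the inverse-limit compatibility of $(U(\tau,X))_X$ along the bonding maps $\beta(\cb_{X',X})$. Continuity of $\varphi$ then follows from the observation that each subbasic open set $\pi_\gamma^{-1}(\{\cC\})$ of $\cP$ pulls back to $\{\tau:\cC\in U(\tau,X(\gamma))\}$, which is the preimage of a basic clopen subset of $\beta(\cC_{X(\gamma)})$ under the canonical projection $\Theta\to\beta(\cC_{X(\gamma)})$.

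For injectivity, I would argue that if $\tau\ne\tau'$ then some $\cC\subseteq\cC_X$ belongs to $U(\tau,X)\setminus U(\tau',X)$, and both $V[\cC]$ and $V[\cC_X\setminus\cC]$ must then be infinite: if, say, $V[\cC]$ were finite then the separation induced by $\{\cC,\cC_X\setminus\cC\}$ would have the finite small side $V[\cC]\cup X$, and the consistency axiom for $\aleph_0$-tangles would force every tangle to exclude $\cC$ from $U(\cdot,X)$---contradicting $\cC\in U(\tau,X)$. Hence $\gamma:=(X,\{\cC,\cC_X\setminus\cC\})$ lies in $\Gamma'$ and distinguishes $\varphi(\tau)$ from $\varphi(\tau')$.

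The heart of the proof is surjectivity, and my plan is to leverage the standard identification $\beta(D)\cong\invLim_P\cP_P$ of the \SC\ \comp\ of a discrete set $D$ with the inverse limit of its finite partitions (the Stone-dual description of $\beta D$). Applying this to each $D=\cC_X$ yields $\beta(\cC_X)\cong\invLim_P\cP_{(X,P)}$, and a direct calculation with the universal property of the \SC\ \comp\ shows that under this identification the bonding map $\beta(\cb_{X',X})$ corresponds to the map sending the compatible family $(\cC_{(X',Q)})_Q$ to $(\cC_{(X,P)})_P$, where $\cC_{(X,P)}$ is the unique class of $P$ with $\cb_{X',X}[\cC_{(X',P\downharpoonright X')}]\subseteq\cC_{(X,P)}$. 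Consequently
\begin{align*}
\Theta=\invLim_X\beta(\cC_X)\cong\invLim_X\invLim_P\cP_{(X,P)}\cong\invLim_{\gamma\in\Gamma}\cP_\gamma=\cP,
\end{align*}
where the last identification uses that each bonding arrow $\gamma\le\gamma'$ in $\Gamma$ factors as $(X,P)\le(X',P\downharpoonright X')\le(X',P')$, so the joint compatibility of the double limit reduces exactly to $\Gamma$-compatibility. The hard part will be this bookkeeping---carefully matching the two orthogonal layers of bonding (refinement at fixed $X$, pullback across $X$) with the single $\Gamma$-bonding, while keeping in mind that passing from $\Gamma$ to the cofinal subset $\Gamma'$ leaves the limit unchanged.
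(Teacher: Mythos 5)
Your proposal is correct, and up to the surjectivity step it follows the paper: same comparison map $\varphi$, same verification of compatibility and continuity via $\varphi_\gamma^{-1}(\cC)=\{\,\tau\mid\cC\in U(\tau,X(\gamma))\,\}$, and the same final appeal to compactness. Where you genuinely diverge is surjectivity. The paper proves that each coordinate map $\varphi_\gamma$ is onto by invoking Diestel's \cite[Lemma~3.7]{EndsAndTangles} --- a graph-theoretic existence statement producing a tangle with $\cC\in U(\tau,X)$ whenever $V[\cC]$ is infinite, which is precisely why the paper first passes to the cofinal subset $\Gamma'$ --- and then deduces surjectivity of $\varphi$ itself from \cite[Corollary~3.2.16]{EngelkingBook}. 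You instead get surjectivity by a purely formal rearrangement: the Stone-duality identification $\beta(\cC_X)\cong\invLim_P\cP_{(X,P)}$ (the fact the paper states without proof at the start of Section~\ref{section_inversetangles}) followed by collapsing the double inverse limit over $\cX$ and over partitions into the single limit over $\Gamma$, using the factorisation $(X,P)\le(X',P\downharpoonright X')\le(X',P')$. This buys independence from Diestel's existence lemma and from the passage to $\Gamma'$, which your argument then needs only for injectivity; there your verification that both classes of a distinguishing bipartition have infinite vertex sets is correct and is more explicit than the paper's bare assertion of injectivity, though the axiom you are actually using is the ``no finite star of finite interior'' condition rather than consistency (the singleton $\{s_{X\to\cC}\}$ is a star whose interior is the finite set $X\cup V[\cC]$). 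The cost of your route is the double-limit bookkeeping, which is routine but must be checked; the paper's route is shorter given its cited lemmas, while yours is more structural and self-contained.
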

\begin{proof}
Letting $\varphi_\gamma\colon\Theta\to\cP_\gamma$ assign $\cC(\tau,\gamma)$ to each tangle $\tau\in\Theta$ defines a collection of maps 
that are compatible as tangles are consistent. To see that our maps are continuous, it suffices to note that for all $\gamma\in\Gamma$ and $\cC\in \cP_\gamma$ we have
\begin{align*}
\varphi_\gamma^{-1}(\cC)=\{\,\tau\in\Theta\mid\cC\in U(\tau,X(\gamma))\,\}.
\end{align*}
The set $V[\cC]$ is infinite due to $\Gamma=\Gamma'$, so Diestel's \cite[Lemma 3.7]{EndsAndTangles} ensures that the preimage $\varphi_\gamma^{-1}(\cC)$ is non-empty, i.e. that our maps are surjective.
Since the tangle space $\Theta$ is compact and the inverse limit $\cP$ is \HD , the maps $\varphi_\gamma$ combine into a continuous surjection $\varphi\colon\Theta\twoheadrightarrow\cP$ (cf.~\cite[Corollary~3.2.16]{EngelkingBook}).
Moreover, $\varphi$ is injective, so it follows from compactness that $\varphi$ is a homeomorphism.
\end{proof}

\section{Background on the Stone-\texorpdfstring{\v{C}}{C}ech compactification of an infinite graph}

\label{sec4}

\subsection{Stone-\texorpdfstring{\v{C}}{C}ech compactification of 1-complexes}
\label{sec_beta1}

The following characterisation of the Stone-\v{C}ech compactification is well-known:
\begin{theorem}[Cf.\ {\cite{EngelkingBook},\cite{GillmanJerison}}]
\label{thm_stonecechchar}
Let $X$ be a Tychonoff space. The following are equivalent for a Hausdorff compactification $\gamma X \supset X$:
\begin{enumerate}
\item $\gamma X = \beta X$,
\item every continuous function $f \colon X \to T$ to a compact Hausdorff space $T$ has a continuous extension $\hat{f} \colon \gamma X \to T$ with $\hat{f} \restriction X = f$,
\item every continuous function $f \colon X \to \uI$ has a continuous extension $\hat{f} \colon \gamma X \to \uI$ with $\hat{f} \restriction X = f$.
\end{enumerate}
Moreover, if $X$ is normal\footnote{In this paper, the property \emph{normal} always includes \emph{Hausdorff}.}, then we may add
\begin{enumerate}[resume]
\item any two closed disjoint sets $Z_1 ,Z_2 \subset X$ have disjoint closures in $\gamma X$,
\item for any two closed sets $Z_1, Z_2 \subset X$ we have
\[\closureIn{Z_1 \cap Z_2}{\gamma G} = \closureIn{Z_1}{\gamma G} \cap\closureIn{Z_2}{\gamma G}. \qedhere\]
\end{enumerate}
\end{theorem}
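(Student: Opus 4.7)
The theorem collects several classical characterisations of $\beta X$; the plan is to verify the cycle (i)$\Rightarrow$(ii)$\Rightarrow$(iii)$\Rightarrow$(i) in the general case, and then to treat (iv) and (v) under normality by connecting them back to (i) and (iii).

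The implication (i)$\Rightarrow$(ii) invokes the usual universal property of $\beta X$ obtained via the diagonal embedding $X \hookrightarrow \uI^{C(X,\uI)}$, and (ii)$\Rightarrow$(iii) is immediate because $\uI$ is compact Hausdorff. The substance lies in (iii)$\Rightarrow$(i). The plan is to apply the universal property of $\beta X$ to the inclusion $X \hookrightarrow \gamma X$, yielding a continuous surjection $\pi \colon \beta X \to \gamma X$ that fixes $X$ pointwise; I will then show $\pi$ is injective by picking distinct $p, q \in \beta X$, using the Tychonoff embedding to produce some $f \in C(X,\uI)$ whose $\beta X$-extension $\hat{f}$ separates them, and extending the same $f$ to $\tilde{f} \colon \gamma X \to \uI$ via (iii). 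Since $\tilde{f} \circ \pi$ and $\hat{f}$ agree on the dense subset $X$ and $\uI$ is Hausdorff, the two maps coincide on all of $\beta X$, forcing $\pi(p) \neq \pi(q)$. A continuous bijection from a compact to a Hausdorff space is automatically a homeomorphism.

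In the normal case, (i)$\Rightarrow$(iv) is a direct consequence of Urysohn's lemma: some $f \colon X \to \uI$ with $f \equiv 0$ on $Z_1$ and $f \equiv 1$ on $Z_2$ extends continuously to $\beta X$ by (ii), and the closures of $Z_1$ and $Z_2$ then sit inside the disjoint fibres $\hat{f}^{-1}(0)$ and $\hat{f}^{-1}(1)$. The direction (v)$\Rightarrow$(iv) is trivial upon taking $Z_1 \cap Z_2 = \emptyset$. For (iv)$\Rightarrow$(v), only the inclusion $\closureIn{Z_1}{\gamma X} \cap \closureIn{Z_2}{\gamma X} \subset \closureIn{Z_1 \cap Z_2}{\gamma X}$ requires work: given a hypothetical point $p$ in the left-hand side but not the right, I would use regularity of $\gamma X$ to select a closed neighbourhood $F$ of $p$ disjoint from $\closureIn{Z_1 \cap Z_2}{\gamma X}$, then consider $Y_i := F \cap Z_i$, which are closed in $X$ with $Y_1 \cap Y_2 \subset F \cap Z_1 \cap Z_2 = \emptyset$. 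Hence (iv) gives disjoint $\gamma X$-closures of $Y_1$ and $Y_2$, yet every open neighbourhood of $p$ contained in the interior of $F$ must meet both $Z_1$ and $Z_2$ inside $F$, placing $p$ in both closures and yielding the contradiction.

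To close the final cycle I would prove (iv)$\Rightarrow$(iii) by extending a given continuous $f \colon X \to \uI$ through the Urysohn-style formula $\hat{f}(p) := \inf\{\, r \in \uI : p \in \closureIn{f^{-1}([0,r])}{\gamma X} \,\}$, with the convention $\inf \emptyset = 1$; for each $r < s$ the preimages $f^{-1}([0,r])$ and $f^{-1}([s,1])$ are closed and disjoint in $X$, so (iv) makes their $\gamma X$-closures disjoint, which is precisely what will render $\hat{f}$ well-defined and render its value a true limit of $f$ along $X$. The main obstacle I expect is the final continuity check for $\hat{f}$---translating disjointness of closures into an $\varepsilon$-control on the infimum---but this is a standard manipulation that should follow smoothly once the framework is in place.
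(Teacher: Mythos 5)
Your argument is correct and is essentially the standard textbook proof of this classical characterisation: the paper itself gives no proof, deferring to \cite{EngelkingBook} and \cite{GillmanJerison}, and your cycle (i)$\Rightarrow$(ii)$\Rightarrow$(iii)$\Rightarrow$(i) together with (i)$\Rightarrow$(iv)$\Leftrightarrow$(v) and the Urysohn-style extension for (iv)$\Rightarrow$(iii) is exactly the route those references take. The only step you leave as a sketch---the continuity check for $\hat{f}(p)=\inf\{\,r : p\in\closureIn{f^{-1}([0,r])}{\gamma X}\,\}$---is indeed routine given the disjoint-closures property, so there is no gap.
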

%

Remarkably, from (iv) it follows that whenever $X$ is normal and $Y\subset X$ is closed, then $\closureIn{Y}{\beta X}=\beta Y$. (Also cf.~\cite[Corollary~3.6.8]{EngelkingBook}.) In particular $\closureIn{V}{\beta G}=\beta V$.


\subsubsection*{Ultrafilter limits}
Consider a compact \HD\ space $X$.
If $x=(\,x_i\mid i\in I\,)$ is a family of points $x_i\in X$ and $U$ is an ultrafilter on the index set $I$, then there is a unique point $x_U\in \closure{\{\,x_i\mid i\in I\}}\subset X$ defined by
\begin{align*}
\{x_U\}=\bigcap_{J\in U}\closure{\{\,x_i\mid i\in J\,\}}.
\end{align*}
Indeed, since $U$ is a filter, the collection $\big\{\,\{x_i\colon i\in J\}\;\big\vert\;J\in U\,\big\}$ has the finite intersection property, and so by compactness of $X$, the intersection over their closures is non-empty; and it follows from Hausdorffness of $X$ that the intersection can contain at most one point.
We also write
\begin{align*}
x_U=\ulim{U}\,x=\ulim{U}\,(\,x_i\mid i\in I\,)
\end{align*}
and call $x_U$ the limit of $(\,x_i\mid i\in I\,)$ along $U$, or $U$-limit of $x$.
Note that if $U$ is the principal ultrafilter generated by $i\in I$, then $x_U=x_i$.

For an alternative description, put $T=\closure{\{\,x_i\mid i\in I\,\}} \subseteq X$ and view $I$ as a discrete space, so that the index function
\begin{align*}
\tilde{x}\colon I\to \{\,x_i\mid i\in I\,\}\subset T,\;i\mapsto x_i
\end{align*}
is continuous and $\beta I$ is given by the space of ultrafilters on $I$.
Then the \SC\ extension $\beta \tilde{x}\colon\beta I\to T$ of the index function $\tilde{x}$ maps each ultrafilter $U\in\beta I$ to $x_U$.

More generally, if $(\,X_i\mid i\in I\,)$ is a family of subsets of a compact \HD\ space $X$ and $U$ is an ultrafilter on the index set $I$, then we write
\begin{align*}
X_U=\ulim{U}\,(\,X_i\mid i\in I\,):=\bigcap_{J\in U}\closure{\bigcup_{i\in J}X_i}\subset X
\end{align*}
and call $X_U$ the $U$-limit of $(\,X_i\mid i\in I\,)$. 
Regarding ultrafilter limits, we have the following well-known lemma.

\begin{lemma}
\label{fibrelemma}
Suppose that $f\colon X\to D$ is a continuous surjection with $X$ normal and $D$ discrete. 
Then the fibres of $\beta f\colon\beta X\to\beta D$ are precisely the sets $\ulim{U}\,(\,f^{-1}(d)\mid d\in D\,)$ with $U$ an ultrafilter on $D$.
\end{lemma}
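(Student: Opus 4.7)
The plan is to exploit the Stone duality between subsets of the discrete set $D$ and clopen subsets of $\beta D$: for every $J\subseteq D$, the set $\hat{J}:=\closureIn{J}{\beta D}$ is clopen in $\beta D$, and viewing $\beta D$ as the space of ultrafilters on $D$, $\hat{J}$ consists precisely of the ultrafilters containing $J$. Since an ultrafilter $U$ on $D$ is a maximal filter, it is determined by its members, so $\{U\}=\bigcap_{J\in U}\hat{J}$, and therefore
\[
(\beta f)^{-1}(U)=\bigcap_{J\in U}(\beta f)^{-1}(\hat{J}).
\]
The lemma will follow once I have established that $(\beta f)^{-1}(\hat{J})=\closureIn{f^{-1}(J)}{\beta X}$ for every $J\subseteq D$, since the right-hand intersection displayed above is then exactly $\ulim{U}(f^{-1}(d)\mid d\in D)$ by definition.

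To see this equality of preimages, I first observe that since $D$ is discrete and $f$ is continuous, both $f^{-1}(J)$ and $f^{-1}(D\setminus J)$ are clopen (hence closed) in $X$; they are disjoint and cover $X$. By normality of $X$ together with Theorem~\ref{thm_stonecechchar}(iv), their closures in $\beta X$ remain disjoint, so by density of $X$ in $\beta X$ they partition $\beta X$ into two clopen pieces. Analogously, $\hat{J}$ and $\widehat{D\setminus J}$ partition $\beta D$. Now $(\beta f)^{-1}(\hat{J})$ is closed and contains $f^{-1}(J)$, so $\closureIn{f^{-1}(J)}{\beta X}\subseteq (\beta f)^{-1}(\hat{J})$; likewise $\closureIn{f^{-1}(D\setminus J)}{\beta X}\subseteq (\beta f)^{-1}(\widehat{D\setminus J})$. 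Since these containments relate matching partitions of $\beta X$, both must be equalities.

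The main obstacle is exactly this passage from the easy inclusion $f^{-1}(J)\subseteq (\beta f)^{-1}(\hat{J})$ to the corresponding equality of closures in $\beta X$; this is where the normality hypothesis is indispensable, entering through property~(iv) of Theorem~\ref{thm_stonecechchar}. Everything else reduces to routine bookkeeping with ultrafilters and clopen sets. The surjectivity of $f$ is used only to ensure that every $U\in\beta D$ actually has a non-empty fibre, so that the description applies to all of $\beta D$ rather than to a proper subset.
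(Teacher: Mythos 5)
Your proof is correct and follows essentially the same route as the paper's: both establish $(\beta f)^{-1}(\closureIn{J}{\beta D})=\closureIn{f^{-1}(J)}{\beta X}$ by comparing the two matching partitions of $\beta X$ induced by $J$ and $D\setminus J$ (with normality guaranteeing disjointness of the closures), and then intersect over $J\in U$. You merely make explicit a few points the paper leaves implicit, such as the appeal to Theorem~\ref{thm_stonecechchar}~(iv) and the identity $\{U\}=\bigcap_{J\in U}\closureIn{J}{\beta D}$.
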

\begin{proof}
First, for an arbitrary subset $J \subset D$ the preimages $f^{-1}(J)$ and $f^{-1}(D \setminus J)$ partition $X$ into closed subsets, and hence induce a partition of $\beta X$ into closed subsets $\closure{f^{-1}(J)}$ and $\closure{f^{-1}(D \setminus J)}$. 
Since also $(\beta f)^{-1}(\closure{J})$ and $(\beta f)^{-1}(\closure{D \setminus J})$ partition $\beta X$, it follows from $\closure{f^{-1}(J)} \subseteq (\beta f)^{-1}(\closure{J})$ that $\closure{f^{-1}(J)} = (\beta f)^{-1}(\closure{J})$ for all $J \subset D$.

Therefore, for an arbitrary ultrafilter $U \in \beta D$ we have 
$$(\beta f)^{-1}(U) = (\beta f)^{-1}\bigg( \bigcap_{J \in U} \closureIn{J}{\beta D}\bigg) =  \bigcap_{J \in U} \closureIn{f^{-1}(J)}{\beta X} = \ulim{U}\,(\,f^{-1}(d)\mid d\in D\,),$$
which is the assertion of the lemma.
\end{proof}

\subsubsection*{Two facts about continua}
We shall need the following two simple lemmas about continua. Recall that a continuum is a non-empty compact connected Hausdorff space. 

\begin{lemma}
\label{closureconnd}
Let $X$ be a compact Hausdorff space, and $C \subset X$ a connected subspace. Then $\closure{C} \subset X$ is a continuum. \qed
\end{lemma}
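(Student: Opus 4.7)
The claim packages three properties of $\closure{C}$: non-emptiness, compactness together with Hausdorffness, and connectedness. The first three are essentially immediate, so the plan is to dispense with them in one sentence and then focus on connectedness, which is the only nontrivial content.

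For the easy part, I would observe that a continuum is non-empty by our convention, and $C\neq\emptyset$ (implicit in calling $C$ a connected subspace), hence $\closure{C}\neq\emptyset$; that $\closure{C}$ is closed in the compact space $X$, hence compact; and that $\closure{C}$ inherits Hausdorffness from $X$.

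For connectedness, I would run the standard separation argument. Suppose for contradiction that $\closure{C}=A\sqcup B$ is a partition into two non-empty subsets that are closed in $\closure{C}$. Since $\closure{C}$ is itself closed in $X$, both $A$ and $B$ are closed in $X$ as well. Intersecting with $C$ yields $C=(C\cap A)\sqcup(C\cap B)$, a partition of $C$ into two relatively closed subsets. Connectedness of $C$ forces one of these intersections to be empty, say $C\cap A=\emptyset$. Then $C\subset B$, and since $B$ is closed in $X$ we get $\closure{C}\subset B$, contradicting $A\neq\emptyset$. The only step requiring any care is remembering that closedness in $\closure{C}$ promotes to closedness in $X$ because $\closure{C}$ is itself closed in $X$; everything else is bookkeeping.

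I do not anticipate any real obstacle: this is a textbook fact and the only choice to make is whether to phrase the connectedness argument via separations or via clopen sets, both versions being equally short.
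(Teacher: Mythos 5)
Your argument is correct and is the standard one; the paper itself states this lemma without proof (the \qed follows the statement directly), treating it as a textbook fact. Your separation argument for connectedness of $\closure{C}$, together with the observation that a closed subspace of a compact Hausdorff space is compact Hausdorff, is exactly the intended justification.
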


A family $(\,C_i\mid i\in I\,)$ of subcontinua of some topological space is said to be \emph{directed} if for any $i,j\in I$ there exists a $k\in I$ such that $C_k \subset C_i \cap C_j$.

\begin{lemma}[{\cite[Theorem~6.1.18]{EngelkingBook}}]
\label{continuaintersection}
The intersection of any directed family of continua is again a continuum. 
\qed
\end{lemma}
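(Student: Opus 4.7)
My plan is to carry out the proof inside a single compact Hausdorff space. Fix any $i_0\in I$ and work throughout in the compact Hausdorff space $C_{i_0}$; since every $C_i$ contains some $C_k \subset C_{i_0}$ by directedness, the intersection $C:=\bigcap_{i\in I} C_i$ is unchanged if we replace the family by $(C_i\cap C_{i_0})_{i\in I}$, so we may assume all $C_i\subset C_{i_0}$. Then $C$ is automatically Hausdorff, and is closed in $C_{i_0}$ as an intersection of closed sets, hence compact.

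For non-emptiness, I would observe that the family $(C_i)_{i\in I}$ is downward directed, and in particular has the finite intersection property (any finite subfamily has a common lower bound $C_k$ which is non-empty as a continuum). By compactness of $C_{i_0}$, this FIP yields $C\neq\emptyset$.

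The heart of the argument is connectedness, which I would prove by contradiction. Suppose $C=A\sqcup B$ with $A,B$ disjoint non-empty closed subsets of $C$. Since $C$ is closed in the compact Hausdorff (hence normal) space $C_{i_0}$, the sets $A,B$ are also closed in $C_{i_0}$ and can therefore be separated by disjoint open sets $U\supset A$ and $V\supset B$ in $C_{i_0}$. Now consider the closed sets $D_i := C_i\setminus(U\cup V)\subset C_{i_0}$. Their intersection is $C\setminus(U\cup V)=\emptyset$, so by compactness of $C_{i_0}$ together with directedness (which promotes FIP to the statement that some finite subintersection, hence some individual $D_{i_1}$, must be empty), there is some index $i_1$ with $C_{i_1}\subset U\cup V$. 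Applying directedness once more, pick $k$ with $C_k\subset C_{i_0}\cap C_{i_1}\subset U\cup V$. Then $U\cap C_k$ and $V\cap C_k$ are disjoint open subsets of $C_k$ covering it, and both are non-empty since $A\subset C\subset C_k$ meets $U$ and $B\subset C\subset C_k$ meets $V$. This disconnects $C_k$, contradicting that it is a continuum.

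The main obstacle I anticipate is the step that extracts a single $D_{i_1}=\emptyset$ from the fact that $\bigcap_i D_i=\emptyset$. The naive compactness argument only yields a finite subfamily with empty intersection, but directedness of $(C_i)$ lets one replace any finite subfamily by a single member $C_k$ sitting below all of them, and then $D_k\subset\bigcap D_{i_j}=\emptyset$. Once this trick is in place, the rest reduces to standard separation and FIP arguments in a compact Hausdorff space.
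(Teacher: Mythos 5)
The paper offers no proof of this lemma at all --- it is quoted directly from Engelking [Theorem~6.1.18] and closed with a q.e.d.\ mark --- so the only comparison available is with the standard textbook argument, and yours is precisely that argument: work inside the compact Hausdorff space $C_{i_0}$, deduce non-emptiness and compactness of $C=\bigcap_i C_i$ from the finite intersection property, and prove connectedness by separating a hypothetical clopen splitting $C=A\sqcup B$ with disjoint open sets $U\supset A$, $V\supset B$ (normality of $C_{i_0}$) and then using compactness together with directedness to produce a single member $C_k\subset U\cup V$, which is thereby disconnected. The proof is correct, and you rightly isolate the one non-routine step: converting ``some finite subfamily of the closed sets $D_i$ has empty intersection'' into ``some single $D_k$ is empty'' by taking a common lower bound of the finitely many indices involved. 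The only blemish is the opening reduction. The lemma is stated for subcontinua of an \emph{arbitrary} topological space, and a compact subset of a non-Hausdorff space need not be closed, so the sets $C_i\cap C_{i_0}$ need not be closed in $C_{i_0}$; hence ``closed as an intersection of closed sets'' is not automatic, and moreover $C_i\cap C_{i_0}$ need not itself be a continuum. Both issues disappear if you instead pass to the subfamily of those \emph{original} members $C_j$ with $C_j\subset C_{i_0}$: it is cofinal by directedness, has the same intersection, and consists of compact subsets of the Hausdorff space $C_{i_0}$, hence of closed subcontinua of it. In every application in this paper the ambient space is $\beta G$ or a closed subspace thereof, hence compact Hausdorff, so the point is cosmetic there.
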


\subsubsection*{The Stone-\v{C}ech compactification of a disjoint sum of intervals} 
Recall that the 1-complex of a connected graph $G$ can be obtained from the topological sum of disjoint unit intervals (one for each edge) by identifying suitable endpoints, and using the quotient topology. 
To formalise this, consider the topological space $\M_E = \uI \times E$ where $E = E(G)$ carries the discrete topology.
Then $G = \M_E / {\Gsim}$ for some suitable equivalence relation identifying endpoints. 
Write $\uI_e$ for $\uI \times \singleton{e} \subset \M_E$, and $x_e$ for $(x,e) \in \uI_e$, so $\M_E=\bigoplus_{e\in E}\uI_e$.

Our next results, and in particular Theorem~\ref{lem_stonecechquotient}, say that the Stone-\v{C}ech compactification of a 1-complex $G$ (which to our knowledge hasn't been studied at all) can be understood through the Stone-\v{C}ech compactification $\beta \M_E$ of $\M_E$ (which has been studied extensively over the past decades, see e.g.\ the survey \cite{hart}).

\begin{lemma}[{\cite[Corollary~2.2]{hart}}]
\label{componentsdescription}
Let $X=\bigoplus_{i\in I} K_i$ be a topological sum of continua, and view $I$ as a discrete space. Consider the continuous projection $\pi \colon X \to I$, sending $K_i$ to $i \in I$. The components of $\beta X$ are the fibres of the map $\beta \pi \colon \beta X \to \beta I$.
\end{lemma}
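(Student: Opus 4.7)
The plan is to reduce the statement to showing that each fibre of $\beta\pi$ is connected. Since $\beta I$ is the \SC\ compactification of a discrete space, it is zero-dimensional, so its connected components are singletons; therefore the continuous map $\beta\pi$ sends each connected subset of $\beta X$ to a single point of $\beta I$, and every component of $\beta X$ is already contained in a fibre of $\beta\pi$. What remains is to verify that every fibre is a (nonempty) continuum.

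For principal ultrafilters $U$ at some $i\in I$, Lemma~\ref{fibrelemma} identifies the fibre $F_U$ with $K_i$, which is a continuum by hypothesis. For a free ultrafilter $U\in\beta I\setminus I$, the same lemma yields
\[F_U \;=\; \bigcap_{J\in U}\closureIn{\bigsqcup_{i\in J}K_i}{\beta X},\]
and I would argue connectedness by contradiction, exploiting that each $K_i$ itself is connected. Suppose $F_U = A\sqcup B$ with both sides closed and non-empty. By normality of $\beta X$, pick disjoint open sets $V_A\supseteq A$ and $V_B\supseteq B$, and set $W := \beta X\setminus(V_A\cup V_B)$, so that $F_U\cap W = \emptyset$. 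Because $V_A$ and $V_B$ are disjoint and each $K_i$ is connected, the index set partitions as $I = J_A\sqcup J_B\sqcup J_W$, where $J_A := \{\,i : K_i\subseteq V_A\,\}$, $J_B := \{\,i : K_i\subseteq V_B\,\}$, and $J_W := \{\,i : K_i\cap W\neq\emptyset\,\}$; since $U$ is an ultrafilter, exactly one of the three belongs to $U$.

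The easy cases are $J_A\in U$ and $J_B\in U$: by symmetry suppose $J_A\in U$, so $F_U\subseteq\closureIn{\pi^{-1}(J_A)}{\beta X}\subseteq\closureIn{V_A}{\beta X}$, which is disjoint from the open set $V_B$; hence $B = F_U\cap V_B = \emptyset$, a contradiction. The substantial case is $J_W\in U$: pick $x_i\in K_i\cap W$ for each $i\in J_W$ and let $p$ be the $U$-limit of $(x_i)_{i\in J_W}$ in $\beta X$. Then $p\in F_U$ by the displayed formula, while $p\in\closure{W} = W$ since $W$ is closed, contradicting $F_U\cap W=\emptyset$. The step I expect to be the main obstacle is exactly this free-ultrafilter case, because each approximating closure $\closureIn{\bigsqcup_{i\in J}K_i}{\beta X}$ is itself disconnected as soon as $|J|\geq 2$, so Lemma~\ref{continuaintersection} cannot be invoked directly; connectedness of $F_U$ has to be extracted asymptotically, and the trichotomy according to which of $J_A, J_B, J_W$ lies in $U$ is what makes the obstruction tractable.
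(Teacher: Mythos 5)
The paper gives no proof of this lemma at all --- it is imported verbatim as \cite[Corollary~2.2]{hart} --- so there is no internal argument to compare yours against; what you have written is a correct, self-contained proof, and it is essentially the standard one. The reduction is right: $\beta I$ is zero-dimensional, so every component of $\beta X$ lies in a single fibre, and it remains to show each fibre is a continuum (nonemptiness follows from surjectivity of $\beta\pi$, or from the finite intersection property in your displayed formula). The principal case correctly collapses to $K_i$ since $\{i\}\in U$ and $K_i$ is closed in $\beta X$. In the free case, the trichotomy $I=J_A\sqcup J_B\sqcup J_W$ is exactly where connectedness of the individual $K_i$ enters (a summand missing $W$ lies in $V_A\cup V_B$ and hence in one of the two), and the $U$-limit $p$ of the chosen points $x_i\in K_i\cap W$ does lie in $F_U$ (because $\{x_i:i\in J\}\subseteq\bigcup_{i\in J}K_i$ for every $J\in U$) and in the closed set $W$ (because $J_W\in U$), giving the contradiction; the cases $J_A\in U$ and $J_B\in U$ are handled correctly via $\closure{V_A}\cap V_B=\emptyset$. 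Two small points worth making explicit in a write-up: Lemma~\ref{fibrelemma} applies because a topological sum of compact Hausdorff spaces is normal; and once every fibre is a continuum, the identification of fibres with components is immediate since each fibre, being connected, is contained in a component, while each component is contained in a fibre, forcing equality.
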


Suppose for a moment that $X=\bigoplus_{i\in I} K_i$ has only countably many components, i.e.\ that $I = \N$. Write $X^* = \beta X \setminus X$ for the \SC\ remainder. In the lemma, $\beta \pi$ denotes the Stone-\v{C}ech extension of $\pi$, where we interpret $\pi$ as a continuous map from $X$ into the compact Hausdorff space $\beta \N \supset \N$. And since $\pi$ has compact fibres (also called \emph{perfect map}), the extension $\beta \pi$ restricts to a continuous map $\pi^*= \beta \pi \restriction X^* \colon X^* \to \N^*$, i.e.\ it maps the remainder of $\beta X$ to the remainder of $\beta\N$, \cite[Theorem~3.7.16]{EngelkingBook}. The figure below illustrates this for $X=\M_{\N}$:

\begin{figure}[ht]
\begin{tikzpicture}[scale=0.93]
\foreach \s in {0,...,7}
{
 \node at (\s+.3,3) {$\uI_{\s}$};
 \node at (\s+.3,0) {$\s$};
  \node[draw, circle,scale=.3, fill] (U\s) at (\s,4) {};
  \node[draw, circle,scale=.3, fill] (V\s) at (\s,2) {};
  \node[draw, circle,scale=.3, fill] (W\s) at (\s,0) {};

\draw[thick] (U\s) -- (V\s);
}

\node at (2.5,1) {$\downarrow$};
\node at (3,1) {$\downarrow$};
\node at (3.5,1) {$\downarrow$};
\node at (4,1) {$\pi$};

\node at (8,3) {$\cdots$};
\node at (8,0) {$\cdots$};

\draw[dashed] (10.5,0) ellipse (2 and .2);
\node at (13,0) {$\N^*$};

\node at (11,1) {$\downarrow$};
\node at (10,1) {$\downarrow$};
\node at (10.5,1) {$\downarrow$};
\node at (11.5,1) {$\pi^*$};

\draw[dashed] (10.5,3) ellipse (2 and 1.1);
\node at (13,3) {$X^*$};

\node[draw, circle,scale=.3, fill, blue, label=right:$U$] at (10,0) {};
\node[draw, circle,scale=.3, fill, red, label=right:$U'$] at (11,0) {};

\draw[decorate, blue, decoration={zigzag, segment length=+2pt, amplitude=+.75pt}] (10,3.8) -- (10,2.2);
 \node[draw, blue, circle,scale=.3, fill] (bla) at (10,3.8) {};
 \node at (10.35,3.8) {$1_U$};
  \node[draw, blue, circle,scale=.3, fill] (bla) at (10,2.2) {};
 \node at (10.35,2.2) {$0_U$};

\draw[decorate, red, decoration={zigzag, segment length=+2pt, amplitude=+.75pt}] (11,3.8) -- (11,2.2);
	 \node[draw, red, circle,scale=.3, fill] (bla) at (11,3.8) {};
 \node at (11.4,3.8) {$1_{U'}$};
 \node[draw, red, circle,scale=.3, fill] (bla) at (11,2.2) {};
 \node at (11.4,2.2) {$0_{U'}$};
 
\node at (10.35,3) {$\uI_U$};
\node at (11.4,3) {$\uI_{U'}$};

\end{tikzpicture}
\end{figure}

Now, for every ultrafilter $U \in \beta \N$ the fibre $\beta \pi^{-1}(U)$ is a connected component of $\beta X$, which is also denoted by $K_U$. 
By Lemma~\ref{fibrelemma} we have
\begin{align*}
\beta \pi^{-1}(U) = K_U = \ulim{U}\,(\,K_i\mid i\in I\,) = \bigcap_{J \in U} \closureIn{\bigcup_{i \in J} K_i}{\beta X}.
\end{align*}
Also, if $(\,x_i\mid i\in I\,)$ is a family of points with $x_i\in K_i$, then $x_U$ is the unique point of $K_U\cap \closureIn{\{\,x_i\mid i\in I\,\}}{\beta X}$. If the spaces $K_i$ are homeomorphic copies of a single space and the points $x_i\in K_i$ correspond to the same point $\xi$ of the original space, then we write $\xi_U$ for $x_U$. For example, if each $K_i$ is a copy of the unit interval and $x_i$ corresponds to $0$ for all $i\in I$, then $x_U=0_U$.

We shall also need the following lemma plus corollary:

\begin{lemma}[{\cite[Lemma~2.3]{hart}}]
\label{cutpoints}
For a family $(\,x_i\mid i\in I\,)$ of points $x_i\in K_i$, the point $x_U$ is a cut-point of $K_U$ if and only if $\{\,i\mid x_i \textnormal{ is a cut-point of } K_i\,\} \in U$.
\end{lemma}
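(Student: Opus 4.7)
The plan is to prove both directions of the biconditional using the $U$-limit description in Section~\ref{sec4}, combined with Stone-\v{C}ech extensions of Urysohn functions. The direction $(\Leftarrow)$ is constructive, producing a disconnection of $K_U \setminus \{x_U\}$ from cut-point decompositions of the individual $K_i$; for $(\Rightarrow)$ I argue by contrapositive, attempting to push a hypothetical disconnection of $K_U \setminus \{x_U\}$ back down to a disconnection of $K_i \setminus \{x_i\}$ for $U$-many indices.

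For $(\Leftarrow)$, after replacing the index set by a member of $U$ I assume every $x_i$ is a cut-point, so each $K_i$ decomposes as $K_i = \bar{A}_i \cup \bar{B}_i$ into two proper closed subcontinua meeting only in $\{x_i\}$. Setting $P := \bigsqcup_i \bar{A}_i$ and $Q := \bigsqcup_i \bar{B}_i$ yields closed subsets of $Y$ with $P \cup Q = Y$ and $P \cap Q = Z := \{x_i : i \in I\}$, a closed discrete subspace. Theorem~\ref{thm_stonecechchar}(v) applied inside the normal space $Y$ gives $\closureIn{P}{\beta Y} \cap \closureIn{Q}{\beta Y} = \closureIn{Z}{\beta Y}$; intersecting with $K_U$ and using that $\closureIn{Z}{\beta Y} \cap K_U = \{x_U\}$ (the only $U$-limit of the discrete family $(x_i)$ inside the fibre $K_U$), I conclude that the closed sets $P_U := \closureIn{P}{\beta Y} \cap K_U$ and $Q_U := \closureIn{Q}{\beta Y} \cap K_U$ cover $K_U$ and meet precisely at $\{x_U\}$. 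To certify a genuine disconnection of $K_U \setminus \{x_U\}$ I still need $P_U, Q_U \supsetneq \{x_U\}$: choosing $a_i \in \bar{A}_i \setminus \{x_i\}$ for each $i$, Urysohn inside each compact Hausdorff $K_i$ supplies continuous $h_i \colon K_i \to [0,1]$ with $h_i(x_i) = 0$ and $h_i(a_i) = 1$, and these paste to $h \colon Y \to [0,1]$ whose Stone-\v{C}ech extension $\beta h$ satisfies $(\beta h)(x_U) = 0 \neq 1 = (\beta h)(a_U)$; hence $a_U \in P_U \setminus \{x_U\}$ is a witness, and symmetrically for $Q_U$.

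For $(\Rightarrow)$, assume each $x_i$ is a non-cut-point and suppose for contradiction that $K_U \setminus \{x_U\} = A \sqcup B$ is a disconnection. The closures $\closureIn{A}{K_U}$ and $\closureIn{B}{K_U}$ are closed in $\beta Y$ and meet only at $x_U$, so for each open neighbourhood $N$ of $x_U$ in $\beta Y$ normality supplies disjoint open sets $V_A^N, V_B^N \subseteq \beta Y$ with $\closureIn{A}{K_U} \setminus N \subseteq V_A^N$ and $\closureIn{B}{K_U} \setminus N \subseteq V_B^N$. The plan is to shrink $N$ tightly enough about $x_U$ that for $U$-many $i$ one has simultaneously $x_i \in N$ and $K_i \setminus \{x_i\} \subseteq V_A^N \cup V_B^N$, whence $(V_A^N \cap K_i) \sqcup (V_B^N \cap K_i)$ is a disconnection of $K_i \setminus \{x_i\}$, contradicting non-cut-point status. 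The main technical obstacle I anticipate is controlling the interaction of the $K_U$-based open sets $V_A^N, V_B^N$ with each individual $K_i$: in a general continuum there is no a priori reason why $x_U$ should admit a neighbourhood basis in $\beta Y$ whose members intersect the $K_i$ in a controlled way. In the case actually applied in the paper, where $K_i = \uI$ is the unit interval, this simplifies sharply --- the non-cut-points of $\uI$ are just its endpoints $\{0, 1\}$, and after passing to an appropriate member of $U$ one may assume $x_i = 0$ throughout; the Stone-\v{C}ech extension of the coordinate projection $\M_E \to \uI$ then supplies a decreasing basis of neighbourhoods of $0_U$ in $\beta Y$ each meeting $K_i$ in an initial interval $[0, \varepsilon)$, along which the descent goes through.
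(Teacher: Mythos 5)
The paper offers no proof of this lemma at all --- it is quoted verbatim from Hart's survey \cite[Lemma~2.3]{hart} --- so there is no in-paper argument to compare against, and I assess your proposal on its own terms. Your $(\Leftarrow)$ direction is correct and essentially complete: after restricting to a member of $U$, the closed sets $P=\bigsqcup_i(A_i\cup\{x_i\})$ and $Q=\bigsqcup_i(B_i\cup\{x_i\})$, Theorem~\ref{thm_stonecechchar}~(v), and the identification $\closureIn{\{\,x_i\mid i\in I\,\}}{\beta Y}\cap K_U=\{x_U\}$ do produce two closed sets covering $K_U$ and meeting only in $x_U$, and your Urysohn witnesses $a_U$ and $b_U$ show both pieces are nondegenerate. (A small inaccuracy: $A_i\cup\{x_i\}$ need not be a subcontinuum, only a closed set --- but that is all your argument uses.)

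The $(\Rightarrow)$ direction has a genuine gap, and it is exactly the direction the paper relies on: Corollary~\ref{openIntervalConnected} needs that $0_U$ is a non-cut point of $[0_U,(\frac{1}{2})_U]\cong\uI_U$. Your fallback for $K_i=\uI$ rests on a false premise: the sets $(\beta p)^{-1}([0,\varepsilon))$, with $p\colon\M_E\to\uI$ the coordinate projection, are nowhere near a neighbourhood basis of $0_U$. Already for $E=\N$ the ``infinitesimal layer'' $L:=\bigcap_{\varepsilon>0}\ulim{U}\,(\,[0,\varepsilon]_e\mid e\in E\,)$ is a nondegenerate subcontinuum of $\uI_U$ (it contains $\ulim{U}\,(2^{-e})\neq 0_U$) which is moreover a neighbourhood of $0_U$ in $\uI_U$, namely it contains $\big(\beta\M_E\setminus\closureIn{\bigcup_e[2^{-e},1]_e}{\beta\M_E}\big)\cap\uI_U$. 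A putative disconnection of $\uI_U\setminus\{0_U\}$ could therefore be concentrated inside $L$, where the coordinate projection is constantly $0$ and your descent to the individual $\uI_e$ detects nothing; indeed the correct proof does not produce disconnections of the $K_i$ at all. The standard argument instead traps one side: if $\uI_U\setminus\{0_U\}=A\sqcup B$ with $1_U\in B$, then for \emph{every} function $f\colon E\to(0,1]$ the continuum $[f_U,1_U]:=\ulim{U}\,(\,[f(e),1]_e\mid e\in E\,)$ contains $1_U$ and misses $0_U$, hence lies in $B$, forcing $A\subseteq[0_U,f_U]$; one then shows $\bigcap_f[0_U,f_U]=\{0_U\}$ by separating any $y\neq 0_U$ from $0_U$ with a Urysohn function $G$ on $\beta\M_E$ and choosing $f(e)$ so small that $G<\frac{1}{2}$ on $[0,f(e)]_e$ for $U$-many $e$. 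The coordinate-dependent radii $f(e)$, shrinking faster than any constant rate, are precisely the ingredient your constant-$\varepsilon$ neighbourhoods cannot supply.
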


\begin{notation}
In the context of $X=\M_E$ we write $\check{\uI}_U$ for $\uI_U\setminus\{0_U,1_U\}$.
\end{notation}

\begin{corollary}
\label{openIntervalConnected}
The spaces $\uI_U\setminus \{0_U\}$, $\uI_U\setminus\{1_U\}$ and $\check{\uI}_U$ are connected.
\end{corollary}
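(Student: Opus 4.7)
The plan is to handle the first two claims directly via Lemma~\ref{cutpoints}, and to prove the connectedness of $\check{\uI}_U$ by writing it as a union of subcontinua of $\check{\uI}_U$ that all pass through a common point.

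By Lemma~\ref{componentsdescription} the space $\uI_U$ is a component of $\beta\M_E$ and hence a continuum. For the constant family $(0_i)_{i\in I}$, Lemma~\ref{cutpoints} says that $0_U$ is a cut-point of $\uI_U$ if and only if $\{i:0_i\text{ is a cut-point of }\uI_i\}\in U$; but this set is empty, since $0$ is not a cut-point of $\uI$. So $0_U$ is a non-cut-point of $\uI_U$, which means $\uI_U\setminus\{0_U\}$ is connected. The argument for $\uI_U\setminus\{1_U\}$ is analogous.

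For $\check{\uI}_U$, my plan is to show that every point $p\in\check{\uI}_U$ lies in some subcontinuum $K_p\subset\check{\uI}_U$ that also contains the midpoint $m:=(1/2)_U$; then $\bigcup_p K_p=\check{\uI}_U$ is connected as a union of connected sets sharing the common point $m$. To construct $K_p$: since $\beta\M_E$ is regular, I would pick an open neighborhood $V$ of $p$ whose closure is disjoint from $\{0_U,1_U\}$. An intermediate lemma---that for an open set $V\subset\beta\M_E$ one has $1_U\in\closureIn{V}{\beta\M_E}$ if and only if $\{i:1_i\in\closureIn{V}{\beta\M_E}\}\in U$, and symmetrically for $0_U$---then yields that $J:=\{i:0_i,1_i\notin\closureIn{V}{\beta\M_E}\}$ lies in $U$. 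For $i\in J$ the set $\closureIn{V}{\beta\M_E}\cap\uI_i$ is a closed subset of $\uI_i$ missing both endpoints, so I choose a closed interval $[a_i,b_i]\subset(0,1)$ containing both this set and $1/2$; for $i\notin J$ set $[a_i,b_i]:=\{1/2\}$. Define $K_p:=\ulim{U}\,(\,[a_i,b_i]_i\mid i\in I\,)$. Since $\bigoplus_i[a_i,b_i]_i\subset\M_E$ is a topological sum of continua, closed in the normal space $\M_E$, Lemmas~\ref{componentsdescription} and~\ref{fibrelemma} together make $K_p$ a connected component---hence a subcontinuum---of its \SC\ \comp. The disjointness in $\M_E$ of $\bigcup_i[a_i,b_i]_i$ from $\bigcup_i\{0_i,1_i\}$, combined with normality, places $K_p$ inside $\check{\uI}_U$, and $m\in K_p$ because $1/2\in[a_i,b_i]$ for every $i$.

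The main obstacle will be verifying that $p\in K_p$. Using density of $\M_E$ in $\beta\M_E$, one has $p\in\closureIn{V\cap\M_E}{\beta\M_E}$. Splitting $V\cap\M_E$ along $J$ and $I\setminus J$: the closure of the $(I\setminus J)$-part lies in $(\beta\pi)^{-1}(\closureIn{I\setminus J}{\beta I})$, which misses $p$ since $(\beta\pi)(p)=U$ and $I\setminus J\notin U$. Hence $p\in\closureIn{V\cap\pi^{-1}(J)}{\beta\M_E}\subset\closureIn{\bigcup_i[a_i,b_i]_i}{\beta\M_E}$, and a second appeal to Lemma~\ref{fibrelemma} places $p$ in the fibre of $\beta\pi$ over $U$ inside the \SC\ \comp\ of $\bigoplus_i[a_i,b_i]_i$, which is precisely $K_p$. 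The auxiliary lemma about $1_U\in\closureIn{V}{\beta\M_E}$ used at the outset can be established by a function-separation trick: if $\{i:1_i\in\closureIn{V}{\beta\M_E}\}\notin U$, piecewise-linear $g_i\colon\uI_i\to\uI$ separating those $1_i\notin\closureIn{V}{\beta\M_E}$ from $V\cap\uI_i$ assemble into a continuous $g\colon\M_E\to\uI$ whose \SC\ extension has $\beta g(1_U)=1$ while $\beta g\equiv0$ on $\closureIn{V}{\beta\M_E}$, a contradiction.
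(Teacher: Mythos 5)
Your proof is correct, but for $\check{\uI}_U$ it takes a genuinely different route from the paper's. The paper disposes of all three claims in three lines by citing Proposition~2.8 of Hart's survey, which says that the non-standard subinterval $[0_U,(\frac{1}{2})_U]$ is itself homeomorphic to $\uI_U$; the two halves $(0_U,(\frac{1}{2})_U]$ and $[(\frac{1}{2})_U,1_U)$ are then connected by Lemma~\ref{cutpoints} and meet in $(\frac{1}{2})_U$. You instead avoid that external homeomorphism entirely and, for each $p\in\check{\uI}_U$, build by hand a subcontinuum $K_p\subset\check{\uI}_U$ through $p$ and $(\frac{1}{2})_U$ as an ultrafilter limit of standard closed subintervals $[a_i,b_i]\subset(0,1)$, so that $\check{\uI}_U$ becomes a union of continua through the common point $(\frac{1}{2})_U$. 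Your argument checks out: the auxiliary lemma is true (though you only need its easy direction, $\{i:1_i\in\closureIn{V}{\beta\M_E}\}\in U\Rightarrow 1_U\in\closureIn{V}{\beta\M_E}$, which follows at once from the definition of $1_U$ as a $U$-limit, so the function-separation trick is not strictly necessary); $K_p$ is a continuum by Lemmas~\ref{componentsdescription} and~\ref{fibrelemma} applied to the closed subspace $\bigoplus_i[a_i,b_i]_i$ of $\M_E$; it avoids $\{0_U,1_U\}$ by Theorem~\ref{thm_stonecechchar}~(iv); and your verification that $p\in K_p$ rests on the identity $\closureIn{\pi^{-1}(I\setminus J)}{\beta\M_E}=(\beta\pi)^{-1}\big(\closureIn{I\setminus J}{\beta I}\big)$, which is exactly what is established inside the paper's proof of Lemma~\ref{fibrelemma}. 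What your approach buys is self-containedness---it uses only facts already stated in the paper---at the cost of considerably more work; the paper's route is much shorter but leans on Hart's structural result about non-standard intervals (and, implicitly, on the fact that $\uI_U$ is covered by its two non-standard halves), both of which your construction sidesteps.
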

\begin{proof}
The non-standard interval $[0_U,(\frac{1}{2})_U]$ is homeomorphic to $\uI_U$ (cf.~\cite[Proposition~2.8]{hart}). Thus $(0_U,(\frac{1}{2})_U]$ is connected by Lemma~\ref{cutpoints}. So is $[(\frac{1}{2})_U,1_U)$. Since both meet in $(\frac{1}{2})_U$, so is their union $\check{\uI}_U$.
\end{proof}

\subsubsection*{Quotients}
As we are interested in 1-complexes, i.e.\ in quotients of $\M_E$, we provide a theorem how the quotient operation relates to the Stone-\v{C}ech functor. We need the following lemma, which is easily verified (alternatively see Theorems~2.4.13 and~1.5.20 from~\cite{EngelkingBook}).

\begin{lemma}
\label{niceQuotientOfNormalSpaceAgainNormal}
Let $V$ be a closed discrete subset of a normal space $X$, and suppose that ${\sim}$ is an equivalence relation on $V$. Then $X/{\sim}$ is again normal.\qed
\end{lemma}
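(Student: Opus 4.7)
The plan is to show that the quotient map $q\colon X\to X/{\sim}$ is a closed continuous surjection with closed fibres, and then invoke the standard fact that such a map from a normal space produces a normal image.

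First, I would extend $\sim$ to an equivalence relation on all of $X$ by declaring $x\sim y$ whenever $x=y$ or else $x,y\in V$ with $x\sim y$ in the original sense. The equivalence classes are then singletons for points outside $V$ and subsets of $V$ for points inside $V$. Because $V$ is closed and discrete in $X$, every subset of $V$ is closed in $V$, hence closed in $X$. In particular, every equivalence class is closed in $X$, and more generally every union of classes that is contained in $V$ is closed in $X$.

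Next, I would verify that $q$ is a closed map. Let $F\subseteq X$ be closed. Its saturation decomposes as
\begin{align*}
q^{-1}(q(F))=F\,\cup\,\{\,v\in V\mid v\sim v'\text{ for some }v'\in F\cap V\,\},
\end{align*}
which is the union of the closed set $F$ with a subset of $V$; by the previous paragraph this second set is closed in $X$, so $q^{-1}(q(F))$ is closed and hence $q(F)$ is closed in $X/{\sim}$ by definition of the quotient topology.

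Finally, I would apply the standard observation that any closed continuous surjection $q\colon X\twoheadrightarrow Y$ with closed fibres from a normal space $X$ yields a normal space $Y$. Given disjoint closed sets $C_1,C_2\subseteq Y$, their preimages $q^{-1}(C_i)$ are disjoint and closed in $X$ and can be separated by disjoint open neighbourhoods $U_1,U_2$ by normality of $X$; then $W_i:=Y\setminus q(X\setminus U_i)$ are open because $q$ is closed, contain $C_i$ because $q^{-1}(C_i)\subseteq U_i$, and are disjoint because any $y\in W_1\cap W_2$ would force $q^{-1}(y)\subseteq U_1\cap U_2=\emptyset$, contradicting surjectivity. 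The analogous argument with $C_i=q^{-1}(y_i)$ for distinct $y_1\neq y_2$, using that these fibres are closed in $X$ (being either singletons or subsets of $V$), shows that $X/{\sim}$ is Hausdorff.

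The only real subtlety, and the place where the hypothesis genuinely enters, is that $V$ is simultaneously closed and discrete in $X$: without discreteness, the saturation of a closed set need not be closed, and without closedness the fibres contained in $V$ might fail to be closed. Both properties are used exactly once, in the closedness-of-$q$ step.
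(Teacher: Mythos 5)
Your proof is correct and follows exactly the route the paper intends: it cites Engelking's Theorems~2.4.13 and~1.5.20, which are precisely your two steps (the quotient map is closed because saturations of closed sets stay closed when $V$ is closed and discrete, and closed continuous surjections with closed fibres preserve normality and Hausdorffness). Nothing further is needed.
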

%

\begin{theorem}
\label{lem_stonecechquotient}
Let $V$ be a closed discrete subset of a normal space $X$,  and suppose that ${\Gsim}$ is an equivalence relation on $V$. 
Let $\{\,V_i\mid i\in I\,\}$ be the collection of all 
${\Gsim}$-classes. 
Consider the equivalence relation ${\bGsim}$ on $\closureIn{V}{\beta X}$ into equivalence classes of the form
\[V_U = \ulim{U}  \,(\,V_i\mid i\in I\,) = \bigcap_{J \in U} \closureIn{\bigcup_{i \in J} V_i}{\beta X},\]
one for each ultrafilter $U$ on $I$, and singletons. Then $X/{\Gsim}$ is again normal and
\[ \beta \p{X / {\sim}} = \p{\beta X} / {\sim}_\beta\;.\]
\end{theorem}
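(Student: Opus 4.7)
The plan is to invoke the universal property of the Stone-\v{C}ech compactification. By Lemma~\ref{niceQuotientOfNormalSpaceAgainNormal}, the quotient $X/{\sim}$ is normal, so $\beta(X/{\sim})$ exists. Let $q\colon X \to X/{\sim}$ be the quotient map; composing with the inclusion $X/{\sim} \hookrightarrow \beta(X/{\sim})$ and applying Theorem~\ref{thm_stonecechchar}(ii) yields a continuous extension $\hat q\colon \beta X \to \beta(X/{\sim})$. The aim is to show that $\hat q$ descends to a homeomorphism $\psi\colon (\beta X)/{\sim}_\beta \to \beta(X/{\sim})$ that restricts to the identity on $X/{\sim}$, which proves the theorem.

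I would first observe that $I := q(V)$ is a closed discrete subspace of $X/{\sim}$: it is closed since $q^{-1}(I) = V$ is closed, and since $V$ is closed discrete in $X$ one can, for each $i \in I$, produce a saturated open set $W_i \subset X$ with $W_i \cap V = V_i$ (take a union of pairwise small neighbourhoods of the vertices in $V_i$), whose $q$-image is an open neighbourhood of $[V_i]$ meeting $I$ only in $\{[V_i]\}$. Hence $\closureIn{I}{\beta(X/{\sim})} = \beta I$, and the restriction $\hat q \rest \beta V$ factors through $\beta I$ as the Stone-\v{C}ech extension $\beta p \colon \beta V \to \beta I$ of the partition map $p \colon V \to I$ sending $v \in V_i$ to $[V_i]$. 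By Lemma~\ref{fibrelemma}, the fibres of $\beta p$ are precisely the sets $V_U = \ulim{U}(V_i \mid i \in I)$, so $\hat q$ is constant on every $\sim_\beta$-class. Therefore $\hat q$ factors as $\psi \circ \pi$ with $\pi\colon \beta X \to (\beta X)/{\sim}_\beta$ the quotient map and $\psi$ continuous; moreover $\psi$ is surjective because $\hat q(\beta X)$ is compact, hence closed in the Hausdorff space $\beta(X/{\sim})$, and contains the dense subset $\hat q(X) = X/{\sim}$.

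The main obstacle is to establish that $\psi$ is injective; once this is done, $\psi$ is a continuous bijection from a compact space onto a Hausdorff space, hence a homeomorphism, and since it restricts to the identity on $X/{\sim}$ the theorem follows. Suppose $x, y \in \beta X$ with $\hat q(x) = \hat q(y)$. If both lie in $\beta V$, then $\beta p(x) = \beta p(y)$ places them in a common fibre $V_U$, giving $x \sim_\beta y$. Otherwise one of them, say $y$, lies outside $\beta V$, so $\beta V \cup \{x\}$ and $\{y\}$ are disjoint closed subsets of the normal space $\beta X$, and Urysohn's lemma provides a continuous $h\colon \beta X \to \uI$ vanishing on $\beta V \cup \{x\}$ with $h(y) = 1$. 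Since $h \rest X$ vanishes on $V$, it is constant on every $\sim$-class, hence factors as $\bar h \circ q$ for a continuous $\bar h\colon X/{\sim} \to \uI$; by uniqueness of Stone-\v{C}ech extensions, $\beta \bar h \circ \hat q = h$, whence $\beta \bar h(\hat q(x)) = 0 \ne 1 = \beta \bar h(\hat q(y))$, a contradiction. Thus only the first case occurs, and $\psi$ is injective.
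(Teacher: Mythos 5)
Your proposal is correct and follows essentially the same route as the paper: extend the quotient map to $\hat q\colon\beta X\to\beta(X/{\sim})$ via the universal property, identify the fibres over $\closureIn{I}{\beta(X/\sim)}=\beta I$ as the sets $V_U$ using Lemma~\ref{fibrelemma}, and conclude by the compact-to-Hausdorff quotient argument. Your Urysohn separation step is just a careful justification of the paper's terse claim that $\beta q$ restricts to a bijection off $\closureIn{V}{\beta X}$, so the two proofs differ only in level of detail.
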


\begin{proof}
Let us write $V / {\sim} = I$ where $I$ is endowed with the discrete topology.
The quotient $X/{\sim}$ is normal by Lemma~\ref{niceQuotientOfNormalSpaceAgainNormal}, so its \SC\ \comp\ exists. Also, the quotient map $q \colon X \to X / {\sim}$ is a continuous closed map (\cite[Proposition~2.4.3]{EngelkingBook}), and so $q[V] = I$ is closed in $X/{\sim}$.

Now by the \SC\ property in Theorem~\ref{thm_stonecechchar}~(ii), the map $q \colon X \to X/{\sim} \subset \beta (X /{\sim})$ extends to a continuous surjection $\beta q \colon \beta X \to \beta (X /{\sim})$.

We claim that each non-trivial fibre of $\beta q$ is of the form $V_U$ for each ultrafilter $U$ on $I$. Since every continuous surjection $f\colon Z \twoheadrightarrow Y$ from a compact space $Z$ onto a Hausdorff space $Y$ gives rise to a homeomorphism between the quotient $Z  / \{\,f^{-1}(y)\mid y \in Y\,\}$ over the fibres of $f$ and $Y$, this implies the desired result.

First, note that $\beta q$ maps $\closureIn{V}{\beta X}$ onto $\closureIn{I}{\beta (X /\sim)}$, and restricts to a bijection on the respective complements, as $V$ is closed. Moreover, as 
$I \subset X/{\sim}$ is closed and discrete, we have 
$\closureIn{I}{\beta (X /\sim)} = \beta I = \{U \colon U \text{ is an ultrafilter on } I \}$. 
Hence, by Lemma~\ref{fibrelemma} the fibres of $\beta q$ are just $(\beta q)^{-1}(U) = V_U$, one for each ultrafilter $U$ on~$I$.
\end{proof}

\begin{corollary}
\label{betaquotient}
Let $X$ be a normal space and $V \subset X$ a closed discrete subset.
Then $X/V$ is again normal and
\[
\pushQED{\qed} 
\beta \p{X / V} = \beta X / \big(\,\closureIn{V}{\beta X}\big).\qedhere
\popQED
\]
\end{corollary}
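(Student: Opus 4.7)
The plan is to deduce the corollary as the trivial case of Theorem~\ref{lem_stonecechquotient}, obtained by choosing the coarsest equivalence relation on $V$. Concretely, I would let ${\Gsim} = V \times V$, so that $V$ forms the unique ${\Gsim}$-class. With this choice, $X/{\Gsim}$ is precisely the quotient $X/V$, and the index set $I$ indexing the ${\Gsim}$-classes is a singleton, say $I = \{\ast\}$ with $V_\ast = V$.

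Since $I$ is a singleton, the only ultrafilter on $I$ is the principal one, and the limit formula provided by Theorem~\ref{lem_stonecechquotient} collapses to
\[V_U = \bigcap_{J\in U}\closureIn{\textstyle\bigcup_{i\in J}V_i}{\beta X} = \closureIn{V}{\beta X}.\]
Consequently the relation ${\bGsim}$ supplied by the theorem identifies the entire subset $\closureIn{V}{\beta X}$ to a single point while leaving every point of $\beta X \setminus \closureIn{V}{\beta X}$ as a singleton class. Substituting this back into the conclusion of Theorem~\ref{lem_stonecechquotient} yields simultaneously the normality of $X/V$ and the desired homeomorphism $\beta(X/V) = \beta X / \closureIn{V}{\beta X}$.

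There is essentially no obstacle to overcome: all the topological work has already been carried out in Theorem~\ref{lem_stonecechquotient}, and the corollary is merely its $|I|=1$ instance. The only items worth noting explicitly are (a) that quotienting $X$ by ${\Gsim} = V \times V$ coincides with the customary notation $X/V$ for collapsing a closed subset to a point, which is immediate from the definition of the quotient topology, and (b) that the principal-ultrafilter computation of $V_U$ indeed recovers the full closure $\closureIn{V}{\beta X}$, which follows directly from the definition of the $U$-limit.
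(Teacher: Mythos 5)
Your proposal is correct and is precisely the argument the paper intends: the corollary is stated with a \textnormal{\qed} because it is the $|I|=1$ instance of Theorem~\ref{lem_stonecechquotient}, where the unique (principal) ultrafilter on the singleton index set gives $V_U=\closureIn{V}{\beta X}$ as the single non-trivial ${\bGsim}$-class. Nothing further is needed.
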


\begin{corollary}
\label{betaquotient2}
Let $X$ and $Y$ be two disjoint normal spaces, and suppose that $A=\{\,a_i\mid i\in I\,\} \subset X$ and $B = \{\,b_i\mid i\in I\,\} \subset Y$ are infinite closed discrete subspaces.
Consider the quotient $Z=(X\oplus Y)/{\sim}$ where we identify pairs $\Set{a_i,b_i}$ for all $i\in I$. Then \[\beta Z = (\beta X \oplus \beta Y) / {\sim}_\beta\] where we identify pairs $\Set{a_U,b_U}$ for all ultrafilters $U$ on $I$.\qed
\end{corollary}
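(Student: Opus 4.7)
The plan is to deduce Corollary~\ref{betaquotient2} as a direct application of Theorem~\ref{lem_stonecechquotient} applied to the normal space $X \oplus Y$ together with its closed discrete subset $V := A \cup B$ and the equivalence relation $\sim$ on $V$ whose non-trivial classes are precisely the pairs $V_i = \{a_i,b_i\}$ for $i \in I$ (with no singleton classes, since every point of $V$ belongs to some $V_i$). First I would verify that the hypotheses of Theorem~\ref{lem_stonecechquotient} hold: the disjoint sum $X \oplus Y$ of two normal spaces is normal, and $V = A \cup B$ is closed and discrete in $X \oplus Y$ because $A$ is closed discrete in $X$ and $B$ is closed discrete in $Y$.

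Granted the theorem, we obtain
\[
\beta Z \;=\; \beta\bigl((X \oplus Y)/{\sim}\bigr) \;=\; \beta(X \oplus Y)/{\sim}_\beta,
\]
where the non-trivial ${\sim}_\beta$-classes are exactly the sets $V_U = \ulim{U}\,(\,V_i \mid i \in I\,)$ for ultrafilters $U$ on $I$. It then remains to rewrite the right-hand side in the form claimed in the corollary. For this I would use the well-known fact that the Stone-\v{C}ech functor distributes over finite disjoint sums of normal spaces, i.e.\ $\beta(X \oplus Y) = \beta X \oplus \beta Y$, which follows from the universal property in Theorem~\ref{thm_stonecechchar}~(ii). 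Under this identification the sets $\beta X$ and $\beta Y$ are clopen in $\beta(X \oplus Y)$, so for any $J \subset I$ we have
\[
\closureIn{\bigcup_{i \in J} V_i}{\beta(X \oplus Y)} \;=\; \closureIn{\{a_i \mid i \in J\}}{\beta X} \;\sqcup\; \closureIn{\{b_i \mid i \in J\}}{\beta Y}.
\]
Intersecting over $J \in U$ and using that intersection distributes over disjoint unions, I obtain
\[
V_U \;=\; \ulim{U}\,(\,\{a_i\} \mid i \in I\,) \;\sqcup\; \ulim{U}\,(\,\{b_i\} \mid i \in I\,) \;=\; \{a_U,\,b_U\},
\]
which is exactly the identification required, and shows moreover that the ${\sim}_\beta$ appearing in Theorem~\ref{lem_stonecechquotient} coincides with the ${\sim}_\beta$ from the corollary.

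There is no real obstacle here; the argument is a bookkeeping exercise once Theorem~\ref{lem_stonecechquotient} is in hand. The only point that needs a moment's care is the clean splitting of the $U$-limits in $\beta(X \oplus Y)$ along the clopen partition $\beta X \sqcup \beta Y$, which is what justifies passing from the abstract $V_U$ provided by the theorem to the concrete pair $\{a_U,b_U\}$.
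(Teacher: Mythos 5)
Your proof is correct and takes essentially the route the paper intends: the corollary is stated with no written proof precisely because it is meant to follow from Theorem~\ref{lem_stonecechquotient} applied to the normal space $X\oplus Y$ with closed discrete subset $A\cup B$, together with $\beta(X\oplus Y)=\beta X\oplus\beta Y$ and the splitting of each $U$-limit $V_U$ along this clopen partition into $\{a_U,b_U\}$. Your verification of that splitting is the only step requiring care, and you handle it correctly.
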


\subsection{Three examples}
\label{sec_beta2}

Before turning towards the proof of our main result, we illustrate the above topological lemmas by three representative examples: We discuss the Stone-\v{C}ech compactification of the infinite ray $R$, the infinite star $S_\lambda$ of degree $\lambda$, and the dominated ray $D$, and compare it side by side with the $\aleph_0$ tangles of these examples.

\subsubsection*{The infinite ray} Consider the infinite ray $R$ with vertex set $V=\{\,v_n\mid n \in \N\,\}$ and edge set $E=\{\,v_nv_{n+1}\mid n \in \N\,\}$. 
Since $R$ is locally finite, the space of $\aleph_0$-tangles consists solely of the single end of $R$, by Theorem~\ref{DiestelsTangleCompWorks}~(ii). Moreover, the 1-complex $R$ is homeomorphic to the positive half line $\mathbb{H}=[0,\infty)$, so they have the same Stone-\v{C}ech remainder $R^* = \Hstar$. The space $\Hstar$ has been extensively investigated, see e.g.\ \cite{hart} for a survey. At this point, however, we are content to provide the standard argument showing that the Stone-\v{C}ech remainder of the infinite ray is indeed connected, confirming the connection between components in the remainder of the Stone-\v{C}ech compactification and the $\aleph_0$-tangles.

\begin{example}
\label{ex_1}
The infinite ray has a connected Stone-\v{C}ech remainder.
\end{example}

\begin{proof}
Deleting a vertex $v_n$ from $R$ leaves behind exactly one infinite component $C_n=R[v_{n+1},v_{n+2},\ldots]$. Then
$ \bigcap_{n \in \N} \closureIn{C_n}{\beta R}$
is a continuum by Lemmas~\ref{closureconnd} and \ref{continuaintersection}. We claim that \[R^* = \bigcap_{n \in \N} \closureIn{C_n}{\beta R}.\] Indeed, ``$\supseteq$" holds as any vertex and edge of $R$ is removed eventually by the intersection. For ``$\subseteq$" note that for any $n \in \N$ we have $R = R[v_0,\ldots,v_{n+1}] \cup C_n$, and hence 
\[R^* \subset \closureIn{R[v_0,\ldots,v_{n+1}]}{\beta R} \cup \closureIn{C_n}{\beta R},\] since the closure operator distributes over finite unions. But $R[v_0,\ldots,v_{n+1}]$ is compact, and hence closed in the Hausdorff space $\beta R$, implying 
\[\closureIn{R[v_0,\ldots,v_{n+1}]}{\beta R} = R[v_0,\ldots,v_{n+1}] \subset R.\]
It follows $R^* \subset \closureIn{C_n}{\beta R}$ for all $n \in \N$ as desired.
\end{proof}

\subsubsection*{The infinite star}

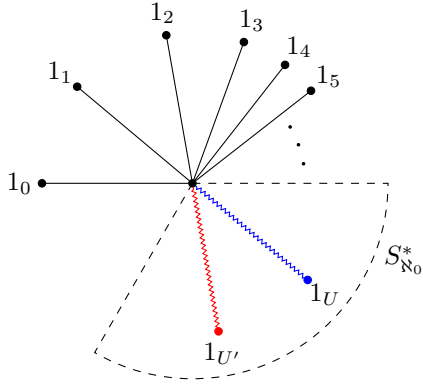
\begin{figure}[ht]
    \begin{tikzpicture}
\def \n {8}
\def \radius {2}
    \def \radiuss {2.3}
\def \margin {6} 

\node[draw, circle,scale=.3, fill] (center) at (0,0) {};
      
\node[draw, circle,scale=.3, fill] (N1) at (180:\radius) {$$};
\node[draw, circle,scale=.3, fill] (N2) at (140:\radius) {$$};
\node[draw, circle,scale=.3, fill] (N3) at (100:\radius) {$$};
\node[draw, circle,scale=.3, fill] (N4) at (70:\radius) {$$};
\node[draw, circle,scale=.3, fill] (N5) at (52:\radius) {$$};
\node[draw, circle,scale=.3, fill] (N6) at (38:\radius) {$$};

\node at (180:\radiuss) {$1_{0}$};
\node at (140:\radiuss) {$1_{1}$};
\node at (100:\radiuss) {$1_{2}$};
\node at (70:\radiuss) {$1_{3}$};
\node at (52:\radiuss) {$1_{4}$};
\node at (38:\radiuss) {$1_{5}$};

\node[draw, circle,scale=.1, fill] (dot) at (30:1.5) {$$};
\node[draw, circle,scale=.1, fill] (dot) at (20:1.5) {$$};
\node[draw, circle,scale=.1, fill] (dot) at (10:1.5) {$$};

\foreach \s in {1,...,6}
{
\draw (center) -- (N\s);
}

\draw[dashed] (0,0) -- (2.6,0) arc (0:-120:2.6) -- (0,0);

\node[blue, draw, circle,scale=.3, fill] (U1) at (-40:\radius) {$$};
\node[red, draw, circle,scale=.3, fill] (U2) at (-80:\radius) {$$};

\node at (-40:\radiuss) {$1_U$};
\node at (-80:\radiuss) {$1_{U'}$};

\draw[decorate, blue, decoration={zigzag, segment length=+2pt, amplitude=+.75pt}] (U1) -- (center);
\draw[decorate, red, decoration={zigzag, segment length=+2pt, amplitude=+.75pt}] (U2) -- (center);

\node at(-20:3) {$S_{\aleph_0}^*$};

\end{tikzpicture}
\caption{The Stone-\v{C}ech compactification of the countable infinite star}
\label{fig:infiniteStar}
\end{figure}

For any cardinal $\lambda$ we denote by $S_\lambda$ the star of degree $\lambda$. 
Clearly, this star has no end, so all $\aleph_0$-tangles are ultrafilter tangles.
As a consequence of \cite[Theorem~4.10]{EndsTanglesCrit}, the ultrafilter tangles correspond precisely to the free ultrafilters on~$\lambda$.
The 1-complex of $S_\lambda$ is obtained from $\M_E$ (with $E$ a discrete space of cardinality $\lambda$) via
\[S_\lambda = \M_E / \{\,0_e\mid e \in E\}.\]

\begin{example}
\label{infinitestar}
The Stone-\v{C}ech remainder of an infinite star $S_\lambda$ is homeomorphic to $\M_E^* \setminus \{\,0_U\mid U\in E^*\}$. 
Each connected component of $S_\lambda^\ast$ is homeomorphic to $\uI_U \setminus \singleton{0_U}$ for some free ultrafilter $U \in E^*$. 
\end{example}

\begin{proof}
Since $S_\lambda = \M_E / \{\,0_e\mid e \in E\,\}$, it follows immediately from Corollary~\ref{betaquotient} that $\beta S_\lambda = \beta \M_E / \closureIn{\{\,0_e\mid e \in E\,\}}{\beta\M_E}$. Since the equivalence class $\closureIn{\{\,0_e\mid e \in E\,\}}{\beta\M_E}$ corresponds to the center vertex of $S_\lambda$, it follows for the remainder of $\beta S_\lambda$ that
\[S^*_\lambda = \M_E^* \setminus \closureIn{\{\,0_e\mid e \in E\,\}}{\beta\M_E} =  \M_E^* \setminus \{\,0_U\mid U \in E^\ast\,\}.\]
By Lemma~\ref{componentsdescription} and Corollary~\ref{openIntervalConnected}, the connected components of the remainder $\M_E^* \setminus \{\,0_U\mid U \in E^\ast\,\}$ are given by $\uI_U \setminus \singleton{0_U}$ for each free ultrafilter $U$ on $E$.
\end{proof}

\subsubsection*{The dominated ray} 

The dominated ray $D$ is the quotient of an infinite star $S_{\aleph_0}$ and a ray $R$ where the leaves of $S_{\aleph_0}$, denoted as in the previous example by $\{\,1_n\mid n \in \N\,\}$, are identified pairwise with vertices of the ray, denoted by $\{\,v_n\mid n \in \N\,\}$ (see Fig.~\ref{fig:dominatedRay}).
Since deleting any finite set of vertices from $D$ leaves only one infinite component, the sole end of $D$ is the one and only $\aleph_0$-tangle.

\begin{figure}[ht]
\begin{tikzpicture}

 \node[draw, circle,scale=.5, fill] (Center) at (0,2) {};

\foreach \s in {0,...,7}
{
\node[draw, circle,scale=.5, fill] (U\s) at (\s,0) {};
\draw[thick] (U\s) -- (Center);
}

\foreach \s in {0,...,7}
{
pgfmathparse{\s+1}
\xdef\t{\pgfmathresult}
\draw[thick] (U\s) -- (U\t);
 \node[] () at (\s,-.3) {$v_\s$};
}
\draw[thick, ->] (U7) -- (8,0);

\node[] () at (6,1) {$\ldots$};
\node[] () at (0,2.3) {$c$};

\end{tikzpicture}
\caption{The dominated ray with dominating vertex $c$}
\label{fig:dominatedRay}
\end{figure}
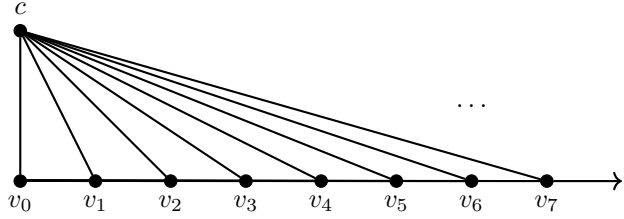

\begin{example}
\label{domray}
The dominated ray $D$ has a connected Stone-\v{C}ech remainder.
\end{example}

\begin{proof}
By Corollary~\ref{betaquotient2}, the Stone-\v{C}ech remainder of $D$ is homeomorphic to the quotient $\p{S^*_{\aleph_0} \oplus R^*}/ {\sim}_\beta$ where $1_U \bGsim v_U$ for every ultrafilter $U \in \N^\ast$ and $1_U \in \uI_U$ and $v_U \in R^*$. 
It follows that every connected component $\uI_U \setminus \Set{0_U}$ of $S^*_{\aleph_0}$ (see Example~\ref{infinitestar}) is, via the identified points $1_U \bGsim v_U$, attached to the connected remainder $R^*$ (see Example~\ref{ex_1}) of $\beta R$, and so $D^\ast$ is indeed connected.
\end{proof}

\section{Comparing the Stone-\texorpdfstring{\v{C}}{C}ech remainder with the tangle space}
\label{sec5}

\subsection{The Stone-\texorpdfstring{\v{C}}{C}ech remainder of the vertex set}

Due to $\beta G=(\beta \M_E)/{\bGsim}$ for any representation $\M_E/{\Gsim}$ of $G$ (Theorem~\ref{lem_stonecechquotient}) we may view $\beta V=\closureIn{V}{\beta G}\subset\beta G$ as the closure of $\{\,[0_e]_{\bGsim}\,,\,[1_e]_{\bGsim}\mid e\in E\,\}$ in the quotient $(\beta\M_E)/{\bGsim}$.
In particular, the non-standard intervals $\uI_U$ (with $U\in E^\ast$) may interact with $V$ or its \SC\ remainder $V^\ast$.
In this subsection, we have a closer look at this interaction.

In the next lemma, we write $V^* = G^* \cap \closureIn{V}{\beta G}$. Since $\beta V = \closureIn{V}{\beta G}$, this potential double meaning does no harm.

\begin{lemma}
\label{noTwoEndpointsSameClass}
Let $\M_E/{\Gsim}$ be a representation of $G$, and let $U\in E^\ast$ be any free ultrafilter.
Then at most one of the endpoints $0_U$ and $1_U$ of $\uI_U$ is contained in some ${\bGsim}$-class that belongs to $V$, and at least one of them is contained in some ${\bGsim}$-class that belongs to $V^\ast$.
\end{lemma}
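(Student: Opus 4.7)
The plan is to identify the $\bGsim$-classes of $0_U$ and $1_U$ with pushforward ultrafilters on $V$, and then rule out both being principal via the simplicity of $G$. More precisely, since the set $W:=\{0_e,1_e \mid e\in E\}$ of endpoints is closed discrete in the normal space $\M_E$, both $0_U=\ulim{U}(0_e\mid e\in E)$ and $1_U=\ulim{U}(1_e\mid e\in E)$ lie in $\closureIn{W}{\beta\M_E}=\beta W$. Letting $\pi\colon W\to V$ denote the quotient map of $\Gsim$, Lemma~\ref{fibrelemma} applied to $\pi$ together with Theorem~\ref{lem_stonecechquotient} identifies the $\bGsim$-class containing $0_U$ as the fibre $(\beta\pi)^{-1}(\mathcal{U}_0)$, where $\mathcal{U}_0=\beta\pi(0_U)\in\beta V$; analogously, the class containing $1_U$ is $(\beta\pi)^{-1}(\mathcal{U}_1)$ with $\mathcal{U}_1=\beta\pi(1_U)$. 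Concretely, $\mathcal{U}_i$ is the pushforward of $U$ along the map $\phi_i\colon E\to V$, $e\mapsto\pi(i_e)$, for $i\in\{0,1\}$.

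Next I would observe that the fibre $(\beta\pi)^{-1}(\mathcal{U}_i)$ belongs to $V$, in the sense that it collapses in $\beta G$ to a point of $V\subset\beta G$, precisely when $\mathcal{U}_i$ is a principal ultrafilter on $V$, and it belongs to $V^\ast$ otherwise. Hence both conclusions of the lemma reduce to the single claim that $\mathcal{U}_0$ and $\mathcal{U}_1$ cannot both be principal.

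Finally, I would prove this last claim by contradiction: if $\mathcal{U}_0,\mathcal{U}_1$ were principal at vertices $v_0,v_1\in V$, then $\phi_0^{-1}(v_0)$ and $\phi_1^{-1}(v_1)$ would both lie in $U$, and hence so would their intersection; but that intersection is the set of edges of $G$ joining $v_0$ and $v_1$ with the prescribed orientation, which has at most one element by simplicity of $G$ (no loops, no parallel edges), contradicting that $U$ is free. The main obstacle, I expect, is not this closing counting step but correctly pinpointing the $\bGsim$-classes of $0_U$ and $1_U$ as pushforward ultrafilters; once that identification is in place, simplicity of $G$ immediately closes the argument.
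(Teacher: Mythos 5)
Your proposal is correct and follows essentially the same route as the paper: both arguments come down to the observation that an endpoint of $\uI_U$ lies in the ${\bGsim}$-class of a vertex $x$ exactly when a corresponding set of edges at $x$ belongs to $U$, and then use simplicity of $G$ (at most one edge between two vertices, no loops) together with freeness of $U$ to rule out two such vertices. Your formulation via the pushforward ultrafilters $\mathcal{U}_0,\mathcal{U}_1$ is a slightly more explicit rendering of the paper's one-line argument with $E(x)\in U$, and it has the minor merit of explicitly excluding the case $v_0=v_1$, which the paper's proof leaves implicit.
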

\begin{proof}
A vertex $x\in G$, viewed as ${\bGsim}$-class (Theorem~\ref{lem_stonecechquotient}), contains an endpoint of $\uI_U$ if and only if $E(x)\in U$.
And since $\vert E(x)\cap E(y)\vert\le 1$ for every distinct two vertices $x,y\in G$, at most one vertex $x\in G$ can satisfy $E(x)$.
\end{proof}


\begin{lemma}
\label{lem_componentsandgraphs}
Let $G$ be a graph, and let $C$ be a connected component of the \SC\ remainder $G^*$. Then $C \cap V^* \neq \emptyset$. In particular, the connected components of $G^*$ induce a closed partition of $V^*$.
\end{lemma}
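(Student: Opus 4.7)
My plan is to exploit the representation $G = \M_E/{\Gsim}$ and $\beta G = (\beta\M_E)/{\bGsim}$ provided by Theorem~\ref{lem_stonecechquotient}, writing $\beta q \colon \beta\M_E \to \beta G$ for the quotient map. My first step will be to pin down the structure of $G^* \setminus V^*$: I claim that every $p \in G^* \setminus V^*$ lies in the image $\beta q(\check{\uI}_U)$ of the interior of some non-standard interval with $U \in E^*$. Indeed, pick any preimage $x \in \beta\M_E$ with $\beta q(x) = p$; since $p \notin G$, we cannot have $x \in \M_E$, so by Lemma~\ref{componentsdescription} $x$ must lie in some non-standard component $\uI_U$ with $U \in E^*$. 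If $x$ were an endpoint $0_U$ or $1_U$, then $x \in \closureIn{V}{\beta\M_E} = \beta V$, forcing $p = \beta q(x) \in \beta V = V \cup V^*$, contradicting $p \in G^* \setminus V^*$. Hence $x \in \check{\uI}_U$.

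For the main claim, let $C$ be a connected component of $G^*$. If $C \subset V^*$ there is nothing to show, so assume there is some $p \in C \setminus V^*$. The structural step above puts $p$ inside some $\beta q(\check{\uI}_U)$ with $U \in E^*$, and since $\check{\uI}_U$ is connected by Corollary~\ref{openIntervalConnected}, so is its continuous image $\beta q(\check{\uI}_U) \subset G^*$; as it contains $p \in C$, this image is entirely contained in $C$. By Lemma~\ref{noTwoEndpointsSameClass}, at least one endpoint, say $0_U$, has its ${\bGsim}$-class in $V^*$; writing $v := \beta q(0_U) \in V^*$, continuity of $\beta q$ together with the fact that $0_U$ lies in the closure of $\check{\uI}_U$ in $\beta\M_E$ (an endpoint of the non-trivial continuum $\uI_U$ cannot be isolated) yields $v \in \closureIn{\beta q(\check{\uI}_U)}{\beta G} \cap G^* = \closureIn{\beta q(\check{\uI}_U)}{G^*}$. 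Therefore $\{v\} \cup \beta q(\check{\uI}_U)$ is connected, forcing $v \in C$ and hence $v \in C \cap V^*$.

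For the ``in particular'' part, the connected components of $G^*$ form a partition into closed subsets (components are always closed), and $V^*$ is itself closed in $G^*$ because $V^* = \beta V \setminus V$ with $\beta V = \closureIn{V}{\beta G}$ closed in $\beta G$. Restricting the component partition to $V^*$ therefore yields a partition of $V^*$ into subsets closed in $V^*$, and the first assertion guarantees that no part of this partition is empty.

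The main obstacle I anticipate is the structural description in the first paragraph, where the identifications under ${\bGsim}$ must be carefully tracked to see that preimages of points in $G^* \setminus V^*$ can only be inner points of non-standard intervals. Once that description is secured, the remainder of the proof is a direct combination of Corollary~\ref{openIntervalConnected} (connectedness of $\check{\uI}_U$) with the endpoint dichotomy of Lemma~\ref{noTwoEndpointsSameClass}.
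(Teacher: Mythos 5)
Your proof is correct and takes essentially the same route as the paper: pass to the representation $G=\M_E/{\Gsim}$, note that every component of $G^*$ contains the (connected, by Corollary~\ref{openIntervalConnected}) image of some $\check{\uI}_U$ with $U\in E^*$, and invoke Lemma~\ref{noTwoEndpointsSameClass} to place at least one endpoint of $\uI_U$ in $V^*$. Your version merely spells out in more detail the structural step that the paper states tersely (that points of $G^*\setminus V^*$ come from inner points of non-standard intervals, and that $0_U$ lies in the closure of $\check{\uI}_U$), and both details check out.
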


\begin{proof}
Consider a representation $G = \M_E / {\sim}$ of $G$, and recall that by Corollary~\ref{openIntervalConnected}, every non-standard component $\uI_U$ of $\M^*_E$ remains connected upon deleting one or both of the endpoints $0_U$ and $1_U$.

Consider some connected component $C$ of $G^*$. 
Then for some $\uI_U \subset \M^*_E$ we have $\check{\uI}_U\subset C$. Therefore, it suffices to show that for every free ultrafilter $U\in E^\ast$ at least one of $[0_U]_{\bGsim}$ and $[1_U]_{\bGsim}$ is in $V^*$.
This is the content of Lemma~\ref{noTwoEndpointsSameClass}.
\end{proof}

\subsection{An auxiliary remainder}
\label{sec_beta4}
The remainder $G^\ast$ not being compact prevents us from using topological machinery, so we study a nice subspace $G^\times\subset G^\ast$ first.
As usual, we start with some new notation.

\begin{notation}
For a vertex $v$ of $G$, write $O(v)$ for its open neighbourhood $\mathring{E}(v)\sqcup\{v\}$ in $G$ consisting of all half-open incident edges at $v$, and write
\begin{align*}
O_{\beta G}(v):=\closureIn{\medcup E(v)}{\beta G}\setminus\closureIn{N(v)}{\beta G}.
\end{align*}
\end{notation}
Due to $\beta G=\closureIn{\bigcup E(v)}{\beta G}\cup\closureIn{G\setminus O(v)}{\beta G}$ and $\closureIn{\bigcup E(v)}{\beta G}\cap\closureIn{G\setminus O(v)}{\beta G}=\closureIn{N(v)}{\beta G}$ the set $O_{\beta G}(v)$ is open in $\beta G$, and it meets $G$ precisely in $O(v)$. 
The set $O_{\beta G}(v)$ is also known as $\text{Ex }O(v)=\beta G\setminus\closure{ G\setminus O(v)}$, the largest open subset of $\beta G$ whose intersection with $G$ is $O(v)$, cf.~\cite[p.~388]{EngelkingBook}.

\begin{obs}\label{ObGvIsStar}
Put $F=E(v)$ and write $H$ for the subspace $\bigcup F\subset G$. 
Since $H$ is the 1-complex of a star, the set $O_{\beta G}(v)$ is homeomorphic to the space from Example~\ref{infinitestar} without the ``endpoints'' (also see Fig.~\ref{fig:infiniteStar}):
\begin{align*}
O_{\beta G}(v)&=\closureIn{H}{\beta G}\setminus\closureIn{N(v)}{\beta G}\cong\beta H\setminus\closureIn{N(v)}{\beta H}\\
&\cong \p{ \beta \M_F/\{\,0_U \mid U\in \beta F\,\} } \setminus\{\,1_U\mid U\in\beta F\,\}
\end{align*}
\end{obs}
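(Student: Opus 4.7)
The plan is to verify the two displayed equations in turn. The first equality $O_{\beta G}(v) = \closureIn{H}{\beta G} \setminus \closureIn{N(v)}{\beta G}$ is just the definition of $O_{\beta G}(v)$ unpacked, using $\medcup F = H$.

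For the first homeomorphism, I would first verify that $H$ is closed in the 1-complex $G$: every point $x \in G \setminus H$ either lies in the interior of an edge not incident to $v$, or is a vertex outside $\{v\} \cup N(v)$, and in both cases has a basic open neighbourhood in $G$ disjoint from $H$. Since the 1-complex of a graph is normal (being a 1-dimensional CW-complex), the consequence of Theorem~\ref{thm_stonecechchar}~(iv) recorded in the excerpt gives $\closureIn{H}{\beta G} = \beta H$. As $\beta H$ is closed in $\beta G$ and contains $N(v)$, it further follows that $\closureIn{N(v)}{\beta G} = \closureIn{N(v)}{\beta H}$, so that the first homeomorphism is in fact an equality of subsets of $\beta G$.

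For the second homeomorphism, note that $H$ is the 1-complex of the star with centre $v$ and leaves $N(v)$, and so exactly as in Example~\ref{infinitestar} it admits the representation $H = \M_F / \{\,0_e \mid e \in F\,\}$. Applying Corollary~\ref{betaquotient} to the closed discrete subset $\{\,0_e \mid e \in F\,\} \subset \M_F$ gives
\[
\beta H \;=\; \beta\M_F \big/ \closureIn{\{\,0_e \mid e \in F\,\}}{\beta\M_F} \;=\; \beta\M_F \big/ \{\,0_U \mid U \in \beta F\,\},
\]
where the second equality uses the standard identification of $\closureIn{\{\,0_e \mid e \in F\,\}}{\beta\M_F}$ with $\beta F$ via $U \mapsto 0_U$. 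Under this quotient, $N(v)$ corresponds to $\{\,1_e \mid e \in F\,\}$, and since the quotient map $\beta\M_F \to \beta H$ is closed and injective on $\{\,1_U \mid U \in \beta F\,\}$ (these points being disjoint from $\{\,0_U \mid U \in \beta F\,\}$), the closure of $N(v)$ in $\beta H$ is exactly $\{\,1_U \mid U \in \beta F\,\}$. Taking the difference delivers the second homeomorphism.

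The main obstacle is the bookkeeping across the three different closure operators in $\beta G$, $\beta H$, and $\beta\M_F$; once normality of the 1-complex and closedness of $H$ in $G$ are settled, the rest is a direct application of Corollary~\ref{betaquotient} combined with the same reasoning as in Example~\ref{infinitestar}.
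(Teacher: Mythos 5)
Your proposal is correct and follows the same route the paper intends: the Observation is justified in the text only by the remark that $H$ is a closed copy of a star together with Example~\ref{infinitestar}, and your write-up simply makes explicit the steps this relies on (closedness of $H$ in the normal 1-complex $G$, hence $\closureIn{H}{\beta G}=\beta H$ by the remark after Theorem~\ref{thm_stonecechchar}, and then the star representation $H=\M_F/\{\,0_e\mid e\in F\,\}$ fed into Corollary~\ref{betaquotient}). The bookkeeping of the three closure operators and the identification of $\closureIn{N(v)}{\beta H}$ with $\{\,1_U\mid U\in\beta F\,\}$ via the closed quotient map are all handled correctly.
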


\begin{definition}
\label{def_Gcross}
The auxiliary remainder of $G$ is the space
\begin{align*}
G^\times :=\beta G\setminus O_{\beta G}[V]\subset G^\ast
\end{align*}
where we write $O_{\beta G}[W]=\bigcup_{v\in W}O_{\beta G} (v)$ for all $W\subset V$.
\end{definition}

\begin{fact}
Since $\beta G$ is compact Hausdorff, so is $G^\times$.
\end{fact}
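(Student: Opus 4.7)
The plan is to reduce the claim to showing that $G^\times$ is a closed subspace of $\beta G$; once this is established, both properties come for free, since closed subspaces of compact Hausdorff spaces are compact Hausdorff.

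First, Hausdorffness of $G^\times$ requires no work at all: it is a subspace of the Hausdorff space $\beta G$ and thus Hausdorff in the subspace topology.

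For compactness, I would argue as follows. By definition,
\[ G^\times = \beta G \setminus O_{\beta G}[V] = \beta G \setminus \bigcup_{v \in V} O_{\beta G}(v). \]
As already noted in the paragraph introducing the notation $O_{\beta G}(v)$, each set $O_{\beta G}(v)$ is open in $\beta G$ (it was identified with $\mathrm{Ex}\,O(v)$, an open subset of $\beta G$). Hence $O_{\beta G}[V]$ is open in $\beta G$ as a union of open sets, and consequently $G^\times$ is closed in $\beta G$. Since $\beta G$ is compact Hausdorff, so is its closed subspace $G^\times$.

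There is no real obstacle here; the fact is essentially a bookkeeping step recording that $G^\times$ lives in the good category of compact Hausdorff spaces, ready for the application of topological tools in the arguments to come. The only point worth double-checking in passing is the openness of each $O_{\beta G}(v)$, but this is immediate from the decomposition $\beta G = \closureIn{\bigcup E(v)}{\beta G} \cup \closureIn{G \setminus O(v)}{\beta G}$ whose overlap is exactly $\closureIn{N(v)}{\beta G}$, as spelled out above the observation.
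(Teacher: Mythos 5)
Your proof is correct and is exactly the argument the paper leaves implicit: each $O_{\beta G}(v)$ is open in $\beta G$ (as noted just before Definition~\ref{def_Gcross}), so $G^\times$ is the complement of an open set, hence a closed subspace of the compact Hausdorff space $\beta G$. Nothing further is needed.
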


\begin{lemma}
\label{infiniteCmeetsGtimes}
The vertex set $V$ of any graph satisfies $V^* \subset G^\times$.
\end{lemma}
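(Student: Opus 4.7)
The plan is to reduce the assertion to the purely combinatorial identity $O_{\beta G}(v) \cap V = \{v\}$ for every vertex $v$; once this is settled, the inclusion $V^* \subset G^\times$ follows from a short open-neighbourhood argument at each vertex.

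To establish the identity $O_{\beta G}(v) \cap V = \{v\}$, I would split into three cases. First, the vertex $v$ itself lies in $\bigcup E(v)$, and the singleton $\{v\}$ is disjoint from the closed vertex set $N(v)$ in the normal $1$-complex $G$, so Theorem~\ref{thm_stonecechchar}(iv) gives $v \notin \closureIn{N(v)}{\beta G}$ and hence $v \in O_{\beta G}(v)$. Second, every vertex $w \in N(v)$ lies in $\closureIn{N(v)}{\beta G}$ and is therefore explicitly removed. Third, for any remaining vertex $w \in V \setminus (\{v\} \cup N(v))$, the edge sets $E(v)$ and $E(w)$ are disjoint, so the basic open neighbourhood of $w$ in $G$ (a union of half-open incident arcs) is disjoint from the closed set $\bigcup E(v)$; Theorem~\ref{thm_stonecechchar}(iv) then forces $w \notin \closureIn{\bigcup E(v)}{\beta G}$.

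With the identity in hand, the remainder of the argument is immediate. Since $V$ is closed and discrete in $G$, the sets $\{v\}$ and $V \setminus \{v\}$ are disjoint closed subsets of $G$, so Theorem~\ref{thm_stonecechchar}(iv) gives $\closureIn{V \setminus \{v\}}{\beta G} = \beta V \setminus \{v\}$; in particular every $x \in V^* = \beta V \setminus V$ lies in $\closureIn{V \setminus \{v\}}{\beta G}$. If such an $x$ also lay in the open set $O_{\beta G}(v)$, then $O_{\beta G}(v)$ would be an open neighbourhood of $x$ that has to meet $V \setminus \{v\}$, contradicting the identity. Taking the union over all $v \in V$ then yields $V^* \cap O_{\beta G}[V] = \emptyset$, as desired.

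The one genuine obstacle is the third case above — making sure that a vertex unrelated to $v$ cannot sneak into the closure of the star at $v$ inside $\beta G$. This looks delicate because $\beta G$ is highly non-metrisable, but it is handled cleanly by normality of $G$ together with the separation-of-closures property of Theorem~\ref{thm_stonecechchar}(iv); everything else is bookkeeping.
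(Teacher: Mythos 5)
Your proof is correct, and it follows the same overall strategy as the paper --- showing that each set $O_{\beta G}(v)$ avoids $V^*$ --- but executes the key step differently. The paper disposes of it in one line by applying Theorem~\ref{thm_stonecechchar}~(v) to the closed sets $\bigcup E(v)$ and $V$: since $\bigcup E(v)\cap V=\{v\}\sqcup N(v)$, one gets $\closureIn{\bigcup E(v)}{\beta G}\cap V^*=N(v)^*\subset\closureIn{N(v)}{\beta G}$, so the part of $V^*$ lying in the closure of the star is already removed when $\closureIn{N(v)}{\beta G}$ is subtracted. You instead first establish the identity $O_{\beta G}(v)\cap V=\{v\}$ inside $G$ using Theorem~\ref{thm_stonecechchar}~(iv), and then transfer it to the remainder via the openness of $O_{\beta G}(v)$ together with the observation that every point of $V^*$ lies in $\closureIn{V\setminus\{v\}}{\beta G}$. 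Both arguments rest on the same combinatorial fact (the star at $v$ meets $V$ only in $v$ and its neighbours) and on the normality clauses of Theorem~\ref{thm_stonecechchar}; yours trades the single application of (v) for a three-case analysis plus a density argument, which is longer but equally sound. Two minor remarks: in your third case the discussion of basic open neighbourhoods of $w$ is superfluous, since $\{w\}$ and $\bigcup E(v)$ are already disjoint closed subsets of the normal $1$-complex and (iv) applies directly; and your first case ($v\in O_{\beta G}(v)$) is not actually needed for the lemma, as the density argument only uses $O_{\beta G}(v)\cap(V\setminus\{v\})=\emptyset$.
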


\begin{proof}
We show that, for every vertex $v\in V$, the set $O_{\beta G}(v)$ avoids $V^\ast$:
\begin{align*}
\closureIn{\medcup E(v)}{\beta G}\cap V^\ast &= \p{ \closureIn{\medcup E(v)}{\beta G}\cap \closureIn{V}{\beta G} }\setminus G = \closureIn{\{v\}\sqcup N(v)}{\beta G}\setminus G\\
&= \p{ \{v\}\sqcup\closureIn{N(v)}{\beta G} }\setminus G=N(v)^\ast\subset\closureIn{N(v)}{\beta G}\qedhere
\end{align*}
\end{proof}

\subsection{The components of the remainder can be distinguished by finite separators}
\label{sec_beta3}

For the tangle \comp\ it is true that every open set $\cO_{\modG}(X,\cC)$ gives rise to a clopen bipartition of the tangle space, namely
\begin{align*}
\big(\,\cO_{\modG}(X,\cC)\cap\Theta\,\big)&\oplus\big(\,\cO_{\modG}(X,\cC_X\setminus\cC)\cap\Theta\,\big),\\
\text{i.e. }\{\,\tau\in\Theta\mid\cC\in U(\tau,X)\,\}&\oplus\{\,\tau\in\Theta\mid\cC\notin U(\tau,X)\,\}.
\end{align*}
In fact, for every two distinct $\aleph_0$-tangles there exists such a clopen bipartition of the tangle space separating the two.
Our next target is to prove that any two components of the remainder of a graph are---just as the $\aleph_0$-tangles---distinguished by a finite order separation. 
That is why we start by studying a possible analogue $\cO_{\beta G}(X,\cC)$ of $\cO_{\modG}(X,\cC)$ for $\beta G$.

\begin{notation}
Given $X\in\cX$ and $\cC\subseteq\cC_X$ we write $G[X,\cC]$ for $G[X\cup V[\cC]]$.
If $\tau$ is an $\aleph_0$-tangle of $G$ and $\gamma$ is an element of $\Gamma$, then we write $G[\tau,\gamma]$ for $G[X(\gamma),\cC(\tau,\gamma)]$.
\end{notation}

For every $X\in\cX$ and $\cC\subset\cC_X$ we let 
\begin{align*}
\cO_{\beta G}(X,\cC):=\closureIn{G[X,\cC]}{\beta G}\setminus G[X]
\end{align*}
which is open in $\beta G$ as a consequence of $\beta G=\closure{G[X,\cC]}\cup \closure{G[X,\cC_X\setminus\cC]}$ and $\closure{G[X,\cC]}\cap \closure{G[X,\cC_X\setminus\cC]}=\closure{G[X]}=G[X]$ (see Theorem~\ref{thm_stonecechchar}~(v)).
Before we check that $\cO_{\beta G}(X,\cC)$ gives rise to clopen bipartitions of $G^\ast$ and $G^\times$, we prove a lemma:

\begin{lemma}
\label{SCdecomposition}
For all $X\in\cX$ and $\cC\subset\cC_X$ we have
\begin{align*}
\closureIn{G[X,\cC]}{\beta G}\subset O_{\beta G}[X]\sqcup\closureIn{\medcup\cC}{\beta G}.
\end{align*}
In particular, for all $\gamma\in\Gamma$ we have
\begin{align*}
\beta G=O_{\beta G} [X(\gamma)]\sqcup\bigsqcup_{\cC\in P(\gamma)}\closureIn{\medcup\cC}{\beta G}.
\end{align*}
\end{lemma}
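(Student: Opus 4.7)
The plan is to combine a direct topological decomposition of the subspace $G[X,\cC]$ with repeated applications of the normal-space property Theorem~\ref{thm_stonecechchar}(v) of the Stone-\v{C}ech compactification. Set $O(X):=\bigcup_{v\in X}O(v)=X\cup\mathring{E}(X)$, the open subspace consisting of $X$ together with all inner points of edges incident with $X$.

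For the first inclusion, I would first verify the purely graph-theoretic identity
\[ G[X,\cC]\cap \p{G\setminus O(X)} \;=\; \medcup\cC, \]
which follows by a straightforward case analysis: removing $X$ and the inner points of all edges meeting $X$ erases precisely the edges inside $G[X]$ and those from $X$ to $\medcup\cC$, leaving the components in $\cC$ untouched. Next, I would show $\beta G\setminus O_{\beta G}(v)=\closureIn{G\setminus O(v)}{\beta G}$ for each $v\in V$. The inclusion ``$\subset$'' is immediate from the identity $\beta G=\closureIn{\bigcup E(v)}{\beta G}\cup\closureIn{G\setminus O(v)}{\beta G}$ recorded right after the definition of $O_{\beta G}(v)$, while ``$\supset$'' holds because $O_{\beta G}(v)$ is open in $\beta G$ and meets $G$ exactly in $O(v)$. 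Intersecting over the finitely many $v\in X$ and invoking Theorem~\ref{thm_stonecechchar}(v) on the closed sets $G\setminus O(v)$ yields
\[ \beta G\setminus O_{\beta G}[X] \;=\; \closureIn{G\setminus O(X)}{\beta G}. \]
Intersecting with $\closureIn{G[X,\cC]}{\beta G}$ and applying Theorem~\ref{thm_stonecechchar}(v) once more (both $G[X,\cC]$ and $G\setminus O(X)$ are closed in $G$) gives
\[ \closureIn{G[X,\cC]}{\beta G}\setminus O_{\beta G}[X] \;=\; \closureIn{\medcup\cC}{\beta G}, \]
which is equivalent to the desired inclusion. The union is automatically disjoint, since $\medcup\cC\subset G\setminus O(X)$ forces $\closureIn{\medcup\cC}{\beta G}\subset\beta G\setminus O_{\beta G}[X]$.

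For the ``in particular'' assertion, applying the first part with $\cC=\cC_{X(\gamma)}$ gives $G[X(\gamma),\cC_{X(\gamma)}]=G$ and hence $\beta G=O_{\beta G}[X(\gamma)]\cup\closureIn{\medcup\cC_{X(\gamma)}}{\beta G}$. Since $P(\gamma)$ is \emph{finite}, $\medcup\cC_{X(\gamma)}=\bigsqcup_{\cC\in P(\gamma)}\medcup\cC$, and closure distributes over this finite union, producing the displayed decomposition. Pairwise disjointness of the closures $\closureIn{\medcup\cC}{\beta G}$ for distinct $\cC,\cC'\in P(\gamma)$ then follows from Theorem~\ref{thm_stonecechchar}(v) together with the observation that each $\medcup\cC$ is closed in $G$ and that $\medcup\cC\cap\medcup\cC'=\emptyset$ in $G$. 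The only mild subtlety is verifying closedness of $\medcup\cC$ in the non-locally-finite case: a hypothetical limit point $p$ outside $\medcup\cC$ lies in $X$, in another component of $G-X$, or in the interior of an edge not contained in $\medcup\cC$; in each case a basic neighbourhood of the form $\bigcup_{e\in E(p)}[p,i_e)$ or a small subinterval of an edge misses $\medcup\cC$ entirely.
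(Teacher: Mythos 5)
Your proof is correct, and it reaches the lemma by a genuinely different (complement-based) route. The paper decomposes $\closureIn{G[X,\cC]}{\beta G}$ from the inside out: it writes $G[X,\cC]$ as the union of $G[X]$, the edge-stars $\medcup E(x,\medcup\cC)$ for $x\in X$, and $\medcup\cC$, and then splits each edge-star closure via Theorem~\ref{thm_stonecechchar}~(v) into a part inside $O_{\beta G}(x)$ and a part inside $\closureIn{N(x)\cap\medcup\cC}{\beta G}\subset\closureIn{\medcup\cC}{\beta G}$. You instead work from the outside in: using $O_{\beta G}(v)=\beta G\setminus\closureIn{G\setminus O(v)}{\beta G}$ (recorded in the paper as $\textnormal{Ex}\,O(v)$) and a finite intersection via (v) you get $\beta G\setminus O_{\beta G}[X]=\closureIn{G\setminus O(X)}{\beta G}$, and one further application of (v) to the closed sets $G[X,\cC]$ and $G\setminus O(X)$ yields the equality $\closureIn{G[X,\cC]}{\beta G}\setminus O_{\beta G}[X]=\closureIn{\medcup\cC}{\beta G}$, from which the stated inclusion and the disjointness of the union are immediate. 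Both arguments rest on exactly the same ingredients --- normality of the $1$-complex $G$ (so that (v) is available) and the elementary description of $O_{\beta G}(v)$ --- so neither is more general; yours is slightly more economical and in fact proves an equality rather than an inclusion, while the paper's version makes explicit that the limit points of the $X$--$\medcup\cC$ edges which escape $O_{\beta G}[X]$ land in $\closureIn{N(x)\cap\medcup\cC}{\beta G}$. Your derivation of the ``in particular'' part (apply the first part to $\cC=\cC_{X(\gamma)}$, distribute the closure over the finite partition, and use (v) for pairwise disjointness) is likewise a correct variant of the paper's, which instead covers $\beta G$ by the finitely many sets $\closureIn{G[X(\gamma),\cC]}{\beta G}$ with $\cC\in P(\gamma)$.
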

\begin{proof}
Due to $\beta G=\bigcup_{\cC\in P(\gamma)}\closureIn{G[X(\gamma),\cC]}{\beta G}$ it suffices to show the first statement:
\begin{align*}
\closure{G[X,\cC]} &=G[X]\cup\bigcup_{x\in X}\closure{\medcup E(x,\medcup\cC)}\cup \closure{\medcup\cC}\\
&\subset\bigcup_{x\in X}\big( O_{\beta G} (x)\sqcup\closure{N(x)\cap\medcup\cC} \,\big)\cup\closure{\medcup\cC}=O_{\beta G}[X]\sqcup\closure{\medcup\cC}
\end{align*}
where at the ``$\subset$'' we used Theorem~\ref{thm_stonecechchar}~(v) for
\begin{align*}
\closure{\medcup E(x,\medcup\cC)} &=\p{ \closure{\medcup E(x,\medcup\cC)}\setminus\closure{N(x)} } \sqcup \p{ \closure{\medcup E(x,\medcup\cC)}\cap\closure{N(x)} }\\
&\subset O_{\beta G}(x)\sqcup \closure{(\medcup E(x,\medcup\cC))\cap N(x)}=O_{\beta G}(x)\sqcup \closure{N(x)\cap\medcup\cC}.\qedhere
\end{align*}
\end{proof}

\begin{lemmaDefinition}\label{Px}\label{finitePartitionOfGast}
Let any $(X,P)\in\Gamma$ be given. Then
\begin{enumerate}
\item $P_\ast:=\big\{\,\closureIn{G[X,\cC]}{\beta G}\cap G^\ast\;\big\vert\,\cC\in P\,\big\}$ and
\item $P_\times:=\big\{\,\closureIn{\bigcup\cC}{\beta G}\cap G^\times\;\big\vert\,\cC\in P\,\big\}$
\end{enumerate}
are finite separations of $G^\ast$ and $G^\times$ into clopen subsets.
\end{lemmaDefinition}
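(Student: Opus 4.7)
The plan is to derive both partitions from the machinery already established, primarily Lemma~\ref{SCdecomposition} and Theorem~\ref{thm_stonecechchar}(v).

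For part (i), I would first observe that $\medcup_{\cC \in P} G[X,\cC] = G$: every vertex is either in $X$ or in exactly one component of $G-X$ (hence in exactly one class $\cC \in P$), and every edge either lies in $G[X]$, joins $X$ to some $V[\cC]$, or has both endpoints inside a single $V[\cC]$ (no edge can cross between distinct components of $G-X$). Since closure commutes with finite unions, $\medcup_{\cC \in P}\closureIn{G[X,\cC]}{\beta G} = \closureIn{G}{\beta G} = \beta G$, so intersecting with $G^\ast$ yields $\medcup P_\ast = G^\ast$. For disjointness of distinct members of $P_\ast$ I would imitate the paper's own justification that $\cO_{\beta G}(X,\cC)$ is open: for distinct $\cC,\cC' \in P$, Theorem~\ref{thm_stonecechchar}(v) gives
\[\closureIn{G[X,\cC]}{\beta G} \cap \closureIn{G[X,\cC']}{\beta G} = \closureIn{G[X,\cC]\cap G[X,\cC']}{\beta G} = \closureIn{G[X]}{\beta G} = G[X] \subset G,\]
where the last equality holds because $G[X]$ is compact. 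Intersecting with $G^\ast = \beta G \setminus G$ therefore gives $\emptyset$. Hence $P_\ast$ is a finite partition of $G^\ast$ by closed subsets, and in any finite closed partition each member is automatically clopen.

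For part (ii), I would simply intersect Lemma~\ref{SCdecomposition}'s decomposition $\beta G = O_{\beta G}[X] \sqcup \bigsqcup_{\cC \in P} \closureIn{\medcup \cC}{\beta G}$ with $G^\times$. Since $X\subset V$, we have $O_{\beta G}[X] \subset O_{\beta G}[V]$, so $G^\times = \beta G \setminus O_{\beta G}[V]$ is disjoint from $O_{\beta G}[X]$, giving
\[G^\times = \bigsqcup_{\cC \in P} \big(\closureIn{\medcup\cC}{\beta G} \cap G^\times\big),\]
which is exactly the statement that $P_\times$ is a partition of $G^\times$. Its members are closed in $G^\times$ as intersections of closed sets with $G^\times$, and clopen for the same reason as in (i).

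I do not expect a genuine obstacle: the only content-bearing step is the short coverage-and-disjointness argument in part (i), while part (ii) is essentially a corollary of Lemma~\ref{SCdecomposition}. If anything, the mild point to double-check is that $G[X,\cC]$ is closed in the 1-complex $G$ so that Theorem~\ref{thm_stonecechchar}(v) applies, but this is a quick inspection of the neighbourhood basis of a vertex (the complement consists of inner edge points and of vertices in other components, whose basic neighbourhoods are easily seen to stay within the complement), and in any case is already implicit in the preceding manipulations of the paper.
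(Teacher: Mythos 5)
Your proposal is correct and follows essentially the same route as the paper: coverage of $\beta G$ by the finitely many closures $\closureIn{G[X,\cC]}{\beta G}$ plus disjointness on the remainder via Theorem~\ref{thm_stonecechchar}~(v) for part (i), and intersecting the decomposition of Lemma~\ref{SCdecomposition} with $G^\times$ for part (ii). The paper's own argument is the same in substance, merely phrasing (ii) as a consequence of (i) together with Lemma~\ref{SCdecomposition}.
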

\begin{proof}
(i). First observe that 
\[\beta G = \closure{G} = \closure{\bigcup_{\cC \in P} G[X,\cC]} = \bigcup_{\cC \in P} \closure{G[X,\cC]}.\] 
At the same time, however, since every $G[X,\cC]$ is a subgraph, and hence a closed subset of $G$, for all $\cC\neq\cC'\in P$ it follows from Theorem~\ref{thm_stonecechchar}~(v) that
\begin{align*}
\closure{G[X,\cC]}\cap\closure{G[X,\cC']}=\closure{G[X,\cC]\cap G[X,\cC']}=\closure{G[X]}=G[X]\subset G
\end{align*}
where the last equality follows from the fact that compact subsets of Hausdorff spaces are closed. Hence, we see that $G^*$ is a disjoint union of finitely many closed sets $G^* = \bigsqcup_{\cC \in P} \big(\,\closure{G[X,\cC]} \cap G^*\big)$.

(ii) follows from (i) with Lemma~\ref{SCdecomposition}.
\end{proof}

\begin{notation}
We write ${\G*sim}$ and ${\Gxsim}$ for the equivalence relations on $G^\ast$ and $G^\times$ whose classes are precisely the connected components of $G^\ast$ and $G^\times$ respectively.
If $C$ is a component of $G^\times$ we write $\hat{C}$ for the unique component of $G^\ast$ including it.
\end{notation}

Our next lemma, the so-called \emph{Separating Lemma}, can be considered as our main technical result of this paper, yielding that distinct components of $G^*$ can be distinguished by a finite order separation of the graph $G$, see Corollaries~\ref{GxCompsInj} and \ref{finiteseparator} below. However, we state the lemma in a slightly more general form, so that we can also apply it in Section~\ref{sec6} when proving Theorem~\ref{mainresult2}. For this, we shall need the following notion of ``tame'':

\begin{definition}
We call a subset $A\subset\beta G$ \emph{tame} if it is ${\Gxsim}$-closed and for every component $C$ of $G^\ast$ meeting $A$ in a point of $O_{\beta G}[V]$ (cf.\ Def.~\ref{def_Gcross}) we have $C\subset A$.
\end{definition}

Here, for a set $M$ we say that $M$ is $R$-closed for $R$ an equivalence relation on any set $N$, if for all $m\in M$ and $n\in N$ with $mRn$ there holds $n\in M$ (phrased differently, whenever $M$ meets an $R$-class then it contains that class entirely, but $M$ might also contain points that do not lie in any $R$-class).

\begin{example}
All ${\G*sim}$-closed subsets of $\beta G$ and all ${\Gxsim}$-closed subsets of $G^\times$ are tame, but both $G^\ast\setminus G^\times$ and $O_{\beta G}[V]$ are not tame as soon as $G$ is not locally finite.
\end{example}

\begin{lemma}[\v{S}ura-Bura Lemma~{\cite[Theorem~6.1.23]{EngelkingBook}}]\label{surabura1}
If $C_1$ and $C_2$ are distinct components of a compact Hausdorff space $X$, there is a clopen bipartition $A\oplus B$ of $X$ with $C_1\subset A$ and $C_2\subset B$.
\end{lemma}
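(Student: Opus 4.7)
The plan is to reduce the statement to the well-known equality between components and quasi-components in compact Hausdorff spaces, and then to prove that equality by a normality-plus-compactness argument. Recall that the \emph{quasi-component} $Q(x)$ of a point $x\in X$ is defined as the intersection of all clopen subsets of $X$ containing $x$. Being an intersection of closed sets, $Q(x)$ is closed; and since the component of $x$ is contained in every clopen neighbourhood of $x$, it is also contained in $Q(x)$.

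Granted the claim that $Q(x)$ equals the component of $x$ for every $x$ in a compact Hausdorff space, the lemma is immediate. Pick any $x_1 \in C_1$ and $x_2 \in C_2$. Since $C_1 = Q(x_1)$ and $x_2 \notin C_1$, there is a clopen set $A \ni x_1$ with $x_2 \notin A$. Setting $B := X \setminus A$, both $A$ and $B$ are clopen and partition $X$. The intersection $C_1 \cap B$ is clopen in $C_1$ and omits $x_1$; by connectedness of $C_1$ it must be empty, so $C_1 \subset A$. Symmetrically $C_2 \subset B$.

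It therefore remains to verify that $Q:=Q(x)$ is connected, hence a subset of the component of $x$ and thus equal to it. Suppose for contradiction that $Q = Q_1 \sqcup Q_2$ is a disconnection with $x \in Q_1$ and $Q_2 \ne \emptyset$, where $Q_1,Q_2$ are closed in $Q$, hence closed in $X$ (because $Q$ is closed). By normality of the compact Hausdorff space $X$, there are disjoint open sets $V_1,V_2 \subset X$ with $Q_i \subset V_i$. Let $\mathcal{F}$ denote the family of clopen sets containing $x$. Since $\bigcap \mathcal{F} = Q \subset V_1 \cup V_2$, the family of closed sets $\{F \setminus (V_1 \cup V_2) : F \in \mathcal{F}\}$ has empty intersection. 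By compactness, finitely many of them already have empty intersection, and since $\mathcal{F}$ is closed under finite intersection, there is a single $F \in \mathcal{F}$ with $F \subset V_1 \cup V_2$.

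Because $V_1 \cap V_2 = \emptyset$, we have $F \cap V_1 = F \setminus V_2$; the left-hand side is open as an intersection of open sets, the right-hand side is closed as $F$ is closed and $V_2$ is open. Hence $F \cap V_1$ is clopen. Since $x \in Q_1 \subset V_1$ and $x \in F$, it contains $x$, so $F \cap V_1 \in \mathcal{F}$ and therefore $Q \subset F \cap V_1 \subset V_1$. But $Q_2 \subset Q$ is nonempty and contained in $V_2$, contradicting $V_1 \cap V_2 = \emptyset$. This completes the proof. The main obstacle is precisely the connectedness of $Q$; everything else is bookkeeping. The trick of splitting a clopen set $F$ sitting inside two disjoint opens into two clopen pieces is what makes the argument work.
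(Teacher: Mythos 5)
Your proof is correct: it is the standard argument that quasi-components coincide with components in compact Hausdorff spaces, which is exactly the content of the result the paper cites from Engelking (Theorem~6.1.23) without reproving it. The reduction to connectedness of $Q(x)$ and the normality-plus-compactness step isolating a single clopen $F\subset V_1\cup V_2$ are both carried out accurately, so nothing is missing.
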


\begin{lemma}[Separating Lemma]
\label{SuperLemma}
Let $A,B\subset \beta G$ be two disjoint closed and tame subsets.
Then there is a finite $X\subset V(G)$ and a bipartition $\{\cC_1,\cC_2\}$ of $\cC_X$ with $A\subset\closureIn{G[X,\cC_1]}{\beta G}$ and $B\subset\closureIn{G[X,\cC_2]}{\beta G}$.
\end{lemma}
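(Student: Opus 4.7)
The plan is to combine \v{S}ura-Bura's Lemma (Lemma~\ref{surabura1}) in the compact Hausdorff auxiliary remainder $G^\times$ with an inverse-limit compactness argument over the directed poset $\Gamma$, and then lift the resulting separation from $G^\times$ to all of $\beta G$ using the tameness hypotheses together with Lemma~\ref{finitePartitionOfGast}.

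\emph{Step 1 (Separation in $G^\times$).} By the ${\Gxsim}$-closedness part of tameness, $A\cap G^\times$ and $B\cap G^\times$ are disjoint closed subsets of the compact Hausdorff space $G^\times$, each being a union of components. Applying \v{S}ura-Bura's Lemma componentwise to $A\cap G^\times$ against the disjoint closed set $B\cap G^\times$, and reducing via compactness of $A\cap G^\times$ to a finite union, I obtain a clopen bipartition $K_1\oplus K_2=G^\times$ with $A\cap G^\times\subset K_1$ and $B\cap G^\times\subset K_2$.

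\emph{Step 2 (Realizing the bipartition through $\Gamma$).} The family $\{P_\times:(X,P)\in\Gamma\}$ of finite clopen partitions of $G^\times$ from Lemma~\ref{finitePartitionOfGast}(ii) is a directed inverse system, since $\bigcup\cC'\subset\bigcup f_{\gamma',\gamma}(\cC')$ for $\gamma\le\gamma'$. The sub-claim is that this family separates components of $G^\times$. Passing to the cofinal subposet $\Gamma_1\subset\Gamma$ consisting of pairs $(X,P)$ where $P$ partitions $\cC_X$ into singletons, each $P_\times$-cell becomes $\closureIn{C}{\beta G}\cap G^\times$ for a single component $C$ of $G-X$; by Lemma~\ref{closureconnd}, $\closureIn{C}{\beta G}$ is a continuum. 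A coherent choice $(C_\gamma)_{\gamma\in\Gamma_1}$ corresponds via Theorem~\ref{ThetaInvLim} to an $\aleph_0$-tangle, and Lemma~\ref{continuaintersection} shows that $\bigcap_\gamma\closureIn{C_\gamma}{\beta G}$ is again a continuum. Analyzing the interaction of this continuum with the closed subspace $G^\times$ yields the component-separation; a standard compactness argument then refines $K_1\oplus K_2$ into a bipartition $\{\cC_1,\cC_2\}$ of $\cC_X$ arising from a single $(X,P)\in\Gamma$.

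\emph{Step 3 (Lifting to $\beta G$).} By Lemma~\ref{finitePartitionOfGast}(i), the sets $\closureIn{G[X,\cC_i]}{\beta G}\cap G^\ast$ form a clopen partition of $G^\ast$, so each component of $G^\ast$ lies entirely in one such cell. By Lemma~\ref{lem_componentsandgraphs}, every component of $G^\ast$ meets $G^\times$, and its placement in $P_\ast$ is determined by its $G^\times$-part's placement in $P_\times$. Combined with tameness condition~(2), which forces every $G^\ast$-component meeting $A\cap O_{\beta G}[V]$ to lie entirely in $A$, this yields $A\cap G^\ast\subset\closureIn{G[X,\cC_1]}{\beta G}$, and symmetrically for $B$. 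For points of $A$ and $B$ in $G$ itself (vertices and inner edge points), an enlargement of $X$ in Step~2 to include the finitely many ``problematic'' vertices near which $A$ and $B$ touch $G$ secures the required containment.

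\emph{Main obstacle.} The crux is the component-separation in Step~2: showing that the directed family $\{P_\times:(X,P)\in\Gamma\}$ faithfully resolves the components of $G^\times$. This is delicate because $G^\times$ is closed but not open in $\beta G$, so directed intersections of continua in $\beta G$ may become disconnected upon restriction to $G^\times$; the proof must carefully trace how tangles of $G$ correspond to components of $G^\times$ via the $\Gamma$-indexed cells. The enlargement step for points of $A\cap G$ and $B\cap G$ in Step~3 is a secondary subtlety that requires combining tameness with closure properties of $A$ and $B$ in $\beta G$.
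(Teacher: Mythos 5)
Your Step 1 matches the paper's opening move, but the heart of your argument, Step 2, has a genuine gap and in fact runs in a circle relative to the way the paper is organised. You want to show that the directed family $\{\,P_\times\mid (X,P)\in\Gamma\,\}$ of finite clopen partitions of $G^\times$ separates the components of $G^\times$, and then conclude by a Stone-duality/compactness argument that the clopen bipartition $K_1\oplus K_2$ is refined by a single $P_\times$. But that separation property is precisely Corollary~\ref{GxCompsInj}, which the paper \emph{deduces from} the Separating Lemma; you give no independent proof of it, only the phrase ``analyzing the interaction of this continuum with $G^\times$ yields the component-separation''. Moreover the sketch you do offer is flawed: the ``cofinal subposet $\Gamma_1$'' of pairs $(X,P)$ with $P$ the partition of $\cC_X$ into singletons is not even a subset of $\Gamma$ once $\cC_X$ is infinite (elements of $\Gamma$ carry \emph{finite} partitions), and coherent choices of single components $C_\gamma$ correspond via Theorem~\ref{EndsAreDirections} only to ends, so the continuum $\bigcap_\gamma\closureIn{C_\gamma}{\beta G}$ says nothing about ultrafilter tangles, which are exactly the delicate case. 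The idea you are missing is the paper's: put $A'=A\cup K_1$ and $B'=B\cup K_2$, separate these disjoint closed sets by disjoint open sets $O_A,O_B$ in the normal space $\beta G$, and then apply compactness to the directed intersection $G^\times=\bigcap_{v\in V}\big(\beta G\setminus O_{\beta G}(v)\big)\subset O_A\sqcup O_B$ to extract a finite $\Xi\subset V$ with $\beta G\setminus O_{\beta G}[\Xi]\subset O_A\sqcup O_B$; the resulting clopen bipartition of $\beta G\setminus O_{\beta G}[\Xi]$ hands you the bipartition of $\cC_\Xi$ directly, with no appeal to component-separation.

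Step 3 also underestimates the remaining work. After obtaining $\Xi$ and a bipartition $\{\cA,\cB\}$ of $\cC_\Xi$ one only has $A\subset\closureIn{G[\Xi,\cA]}{\beta G}\cup O_{\beta G}[\Xi]$, and the ``finitely many problematic vertices'' you invoke are really the endvertices of the edge set $F_A=\{\,e\in E(\Xi,\bigcup\cB)\mid\mathring{e}\text{ meets }A\,\}$; that this set is \emph{finite} is a claim that requires proof (the paper derives it by contradiction from the second tameness condition combined with the fact that every component of $G^\ast$ meets $V^\ast\subset G^\times$, via Lemmas~\ref{lem_componentsandgraphs} and~\ref{infiniteCmeetsGtimes}). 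Without that finiteness argument the enlargement of $\Xi$ to the final $X$ is not available, and the containments $A\subset\closureIn{G[X,\cC_1]}{\beta G}$, $B\subset\closureIn{G[X,\cC_2]}{\beta G}$ do not follow.
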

\begin{proof}
Given $A$ and $B$ we use normality of $G^\times$ and a compactness argument to deduce from Lemma~\ref{surabura1} that there is a clopen bipartition $K_A\oplus K_B$ of $G^\times$ with $A\cap G^\times\subset K_A$ and $B\cap G^\times\subset K_B$.
Put $A'=A\cup K_A$ and $B'=B\cup K_B$ so $A'$ and $B'$ are closed and disjoint subsets of $\beta G$. 
Using that $\beta G$ is normal we find disjoint open sets $O_A,O_B\subset\beta G$ with $A'\subset O_A$ and $B'\subset O_B$.
Next, since
\begin{align*}
\bigcap_{v\in V}(\beta G\setminus O_{\beta G}(v))=G^\times=K_A\oplus K_B\subset O_A\sqcup O_B
\end{align*}
is an intersection of closed sets which is contained in the open set $O_A\sqcup O_B$, it follows from compactness that there are finitely many vertices $v_1,\ldots,v_n$ such that
\begin{align*}
\bigcap_{i=1}^n (\beta G\setminus O_{\beta G}(v_i))\subset O_A\sqcup O_B.
\end{align*}
Put $\Xi=\{v_1,\ldots,v_n\}$. Then $O_A\sqcup O_B$ induces a clopen bipartition $K_A'\oplus K_B'$ of the closed subspace $\beta G\setminus O_{\beta G}[\Xi]$ of $\beta G$ which in turn induces a bipartition $Q=\{\cA,\cB\}$ of $\cC_\Xi$ via
\begin{align*}
\cA=\{\,C\in\cC_\Xi\mid C\subset K_A'\,\}\quad\text{and}\quad\cB=\{\,C\in\cC_\Xi\mid C\subset K_B'\,\}.
\end{align*}
In particular, we have 
\begin{align}\label{randomEq}
\closureIn{\medcup\cA}{\beta G}\subset K_A'\quad\text{and}\quad\closureIn{\medcup\cB}{\beta G}\subset K_B'.
\end{align}
Moreover, by Lemma~\ref{Px}, $Q_\times$ must be the clopen bipartition $K_A\oplus K_B$ of $G^\times$. 

Now we want that
\begin{align}\label{wanted}
A\subset\closureIn{G[\Xi,\cA]}{\beta G}\quad\text{and}\quad B\subset\closureIn{G[\Xi,\cB]}{\beta G},
\end{align}
but with the help of Lemma~\ref{SCdecomposition} and (\ref{randomEq}) we only get
\begin{align*}
&A\subset \beta G\setminus\closureIn{\medcup\cB}{\beta G}=\closureIn{G[\Xi,\cA]}{\beta G}\cup O_{\beta G}[\Xi]\\
\text{and}\quad &B\subset\beta G\setminus\closureIn{\medcup\cA}{\beta G}=\closureIn{G[\Xi,\cB]}{\beta G}\cup O_{\beta G}[\Xi]
\end{align*}
with $A$ and $B$ possibly meeting $O_{\beta G}[\Xi]$.
To resolve this issue, we will find a way to widen $\Xi$ by adding only finitely many vertices, and adjusting $\cA$ and $\cB$ accordingly so as to make (\ref{wanted}) true.

For this, we note first that 
\begin{align}\label{SuperLemmaEq1}
A\cap G^\ast\subset \closureIn{G[\Xi,\cA]}{\beta G}.
\end{align}
Indeed, we know that $A$ is tame, that each component of $G^\ast$ meets $V^\ast\subset G^\times$ (see Lemmas~\ref{lem_componentsandgraphs} and~\ref{infiniteCmeetsGtimes}), and that
\begin{align*}
A\cap G^\times\subset K_A=\closureIn{\medcup\cA}{\beta G}\cap G^\times\subset\closureIn{G[\Xi,\cA]}{\beta G}
\end{align*}
where $\closureIn{G[\Xi,\cA]}{\beta G}\cap G^\ast$ is clopen (Lemma~\ref{Px}); combining these facts yields (\ref{SuperLemmaEq1}).

Second, we show that there exists a finite set $F_A$ of edges of $G$ with 
\begin{align}\label{SuperLemmaEq2}
A\cap G\subset G[\Xi,\cA]\cup\medcup F_A.
\end{align}
Indeed, 
by $A\subset\closureIn{G[\Xi,\cA]}{\beta G}\cup O_{\beta G}[\Xi]$,
it suffices to show that
\begin{align*}
F_A:=\{\,e\in E(X,\medcup\cB)\mid \mathring{e}\text{ meets }A\,\}
\end{align*}
is finite.
Suppose for a contradiction that $F_A$ is infinite, and for every edge $e\in F_A$ pick some $i_e\in\mathring{e}\cap A$.
Then $\closureIn{\{\,i_e\mid e\in F_A\,\}}{\beta G}\subset A$ meets $G^*\cap \closureIn{G[X,\cB]}{\beta G}\cap O_{\beta G}[\Xi]$ in some component $C$ of $G^\ast$. 
But we noted earlier that each component of $G^\ast$ meets $V^\ast\subset G^\times$, so the tame set $A$ meeting $C$ means that $\emptyset\neq C\cap G^\times\subset A\cap K_B$, a contradiction.
Of course, corresponding versions of (\ref{SuperLemmaEq1}) and (\ref{SuperLemmaEq2}) hold for $B$.

Finally, we use (\ref{SuperLemmaEq1}) and (\ref{SuperLemmaEq2}) to yield a true version of (\ref{wanted}).
For this, we let $X$ be the finite vertex set obtained from $\Xi$ by adding the endvertices of the edges in $F_A\cup F_B$, and we put $\cC_1=\cb_{X,\Xi}^{-1}(\cA)$ and $\cC_2=\cb_{X,\Xi}^{-1}(\cB)$.
Due to $G[X,\cC_1]\supset G[\Xi,\cA]\cup\bigcup F_A$ and $G[X,\cC_2]\supset G[\Xi,\cB]\cup\bigcup F_B$, we may use (\ref{SuperLemmaEq1}) and (\ref{SuperLemmaEq2}) to deduce that
\begin{align*}
A\subset\closureIn{G[X,\cC_1]}{\beta G}\quad\text{and}\quad B\subset \closureIn{G[X,\cC_2]}{\beta G}.&\qedhere
\end{align*}
\end{proof}

Using Lemma~\ref{SCdecomposition} we obtain the following corollary:

\begin{corollary}\label{GxCompsInj}
For every pair of distinct components $C_1,C_2$ of $G^\times$ there is a finite $X\subset V(G)$ and a bipartition $P=\{\cC_1,\cC_2\}$ of $\cC_X$ such that the components $\hat{C}_1\supset C_1$ and $\hat{C}_2\supset C_2$ of $G^\ast$ are separated by the clopen bipartition $P_\ast$ of $G^\ast$.\qed
\end{corollary}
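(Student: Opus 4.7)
The plan is to obtain this corollary as a direct application of the Separating Lemma to the pair $C_1, C_2$ itself, followed by a short observation about how clopen subsets of $G^\ast$ behave with respect to its components.

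First I would verify the hypotheses of Lemma~\ref{SuperLemma} for $A = C_1$ and $B = C_2$. Since $C_1$ and $C_2$ are distinct components of the compact Hausdorff space $G^\times$, they are disjoint and closed in $G^\times$; because $G^\times$ is itself closed in $\beta G$, they are closed in $\beta G$ as well. Tameness is automatic: each $C_i$ is a ${\Gxsim}$-class, hence ${\Gxsim}$-closed, and the second clause in the definition of tame is vacuous because $C_i \subset G^\times$ is disjoint from $O_{\beta G}[V]$ by Definition~\ref{def_Gcross}.

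Applying Lemma~\ref{SuperLemma} then yields a finite $X \subset V(G)$ and a bipartition $\{\cC_1, \cC_2\}$ of $\cC_X$ with
\[
C_1 \subset \closureIn{G[X,\cC_1]}{\beta G} \qquad \text{and} \qquad C_2 \subset \closureIn{G[X,\cC_2]}{\beta G}.
\]
By Lemma~\ref{Px}~(i), the sets $A_i := \closureIn{G[X,\cC_i]}{\beta G}\cap G^\ast$ form a partition of $G^\ast$ into two clopen pieces, i.e.\ this is exactly $P_\ast$ for $P = \{\cC_1,\cC_2\}$.

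It remains to upgrade the containments from $C_i$ to the full components $\hat{C}_i$ of $G^\ast$. The key observation is that any clopen subset of $G^\ast$, being a union of connected components, either contains a component of $G^\ast$ entirely or is disjoint from it. Since $A_1$ is clopen in $G^\ast$ and meets $\hat{C}_1$ (as $\emptyset \neq C_1 \subset \hat{C}_1 \cap A_1$), we conclude $\hat{C}_1 \subset A_1$; symmetrically $\hat{C}_2 \subset A_2$. Since $A_1 \cap A_2 = \emptyset$, the clopen bipartition $P_\ast$ separates $\hat{C}_1$ from $\hat{C}_2$, as required. No step should present a serious obstacle here; the entire content of the corollary is the Separating Lemma together with the fact that clopen sets are saturated under taking components.
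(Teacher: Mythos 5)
Your proof is correct and matches the paper's intended argument: the corollary is stated with a \qed\ as a direct consequence of the Separating Lemma applied to the two components (which are closed and tame exactly for the reasons you give), with the upgrade from $C_i$ to $\hat{C}_i$ following because the classes of $P_\ast$ are clopen in $G^\ast$ and hence saturated with respect to its components. Nothing is missing.
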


\begin{lemma}\label{bijectiveComponentExtension}
The map $C\mapsto\hat{C}$ defines a bijection between $G^\times/{\Gxsim}$ and $G^\ast/{\G*sim}$.
\end{lemma}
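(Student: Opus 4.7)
The plan is to establish injectivity and surjectivity separately, both of which essentially follow from work already done.

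For \emph{injectivity}, suppose $C_1, C_2$ are distinct components of $G^\times$. I would directly invoke Corollary~\ref{GxCompsInj}, which gives a finite vertex set $X$ and a bipartition $P = \{\cC_1, \cC_2\}$ of $\cC_X$ such that the induced clopen bipartition $P_\ast$ of $G^\ast$ (from Lemma~\ref{finitePartitionOfGast}) separates $\hat{C}_1$ and $\hat{C}_2$. Since a connected subset of $G^\ast$ must lie entirely on one side of a clopen bipartition, this forces $\hat{C}_1 \neq \hat{C}_2$.

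For \emph{surjectivity}, let $D$ be an arbitrary component of $G^\ast$. By Lemma~\ref{lem_componentsandgraphs}, $D$ meets $V^\ast$, and by Lemma~\ref{infiniteCmeetsGtimes}, $V^\ast \subset G^\times$. Hence $D \cap G^\times \neq \emptyset$. Pick any point $x \in D \cap G^\times$ and let $C$ be the component of $G^\times$ containing $x$. Since $C$ is a connected subset of $G^\times \subset G^\ast$, it is contained in a unique component of $G^\ast$, namely $\hat{C}$. Because $x \in C \cap D$, we must have $\hat{C} = D$.

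The map is therefore a well-defined bijection. I do not anticipate any real obstacle here: the essential work is packaged in Corollary~\ref{GxCompsInj} (which rests on the Separating Lemma) and in the two preparatory lemmas on how $V^\ast$ sits inside $G^\times$ and meets every component of $G^\ast$. The proof amounts to assembling these pieces.
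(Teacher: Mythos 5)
Your proof is correct and follows essentially the same route as the paper's: surjectivity from Lemmas~\ref{lem_componentsandgraphs} and~\ref{infiniteCmeetsGtimes} (every component of $G^\ast$ meets $V^\ast\subset G^\times$), and injectivity from Corollary~\ref{GxCompsInj}. You have merely spelled out the details that the paper leaves implicit.
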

\begin{proof}
Each component of $G^\ast$ meets $V^\ast\subset G^\times$ (see Lemmas~\ref{lem_componentsandgraphs} and~\ref{infiniteCmeetsGtimes}), so the map $C \mapsto \hat{C}$ is onto. It is injective by Corollary~\ref{GxCompsInj}.
\end{proof}

Corollary~\ref{GxCompsInj} and Lemma~\ref{bijectiveComponentExtension} yield another important result:

\begin{corollary}\label{finiteseparator}
For every pair of distinct components $C_1,C_2$ of $G^\ast$ there is a finite $X\subset V(G)$ and a bipartition $P$ of $\cC_X$ such that the clopen bipartition $P_\ast$ of $G^\ast$ separates $C_1$ and $C_2$.\qed 
\end{corollary}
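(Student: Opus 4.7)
The plan is to reduce the separation problem on $G^\ast$ to the already-proved separation problem on $G^\times$ via the bijection of Lemma~\ref{bijectiveComponentExtension}. Given two distinct components $C_1, C_2$ of $G^\ast$, I would invoke Lemma~\ref{bijectiveComponentExtension} to pull them back to distinct components $D_1, D_2$ of $G^\times$ satisfying $\hat{D}_1 = C_1$ and $\hat{D}_2 = C_2$. Then I would apply Corollary~\ref{GxCompsInj} to the pair $D_1, D_2$ to obtain a finite set $X \subset V(G)$ and a bipartition $P = \{\cC_1,\cC_2\}$ of $\cC_X$ whose induced clopen bipartition $P_\ast$ of $G^\ast$ separates $\hat{D}_1$ from $\hat{D}_2$, i.e.\ separates $C_1$ from $C_2$, as required.

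I do not anticipate any real obstacle here: Corollary~\ref{GxCompsInj} already packages the hard work (the \v{S}ura-Bura-style argument from the Separating Lemma, plus the translation via Lemma~\ref{SCdecomposition} from closures of vertex sets to the partition sets $P_\ast$), and Lemma~\ref{bijectiveComponentExtension} guarantees that no information is lost when we go back and forth between components of $G^\times$ and components of $G^\ast$. All that the present corollary adds is the observation that the finite-order separation produced in Corollary~\ref{GxCompsInj} already separates the \emph{whole} components $\hat{D}_1, \hat{D}_2$ of $G^\ast$, not merely their intersections with $G^\times$. Hence the argument is essentially a one-line deduction, and the statement follows from combining the cited results directly.
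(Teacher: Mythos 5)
Your proposal is correct and is exactly the paper's intended argument: the corollary is stated with an immediate \qed, derived by combining Lemma~\ref{bijectiveComponentExtension} (to pull the two components of $G^\ast$ back to distinct components of $G^\times$) with Corollary~\ref{GxCompsInj} (which already asserts that $P_\ast$ separates the full components $\hat{C}_1,\hat{C}_2$ of $G^\ast$, so no further observation is even needed).
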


\begin{corollary}
\label{distinguishComponents}
The quotients $G^\times/{\Gxsim}$ and $G^\ast/{\G*sim}$ are \HD . \qed
\end{corollary}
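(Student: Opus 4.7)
The plan is to deduce Hausdorffness of each quotient directly from the separating results that have just been proved. Recall the standard fact that if $q \colon X \twoheadrightarrow X/{\sim}$ is a quotient map and any two distinct equivalence classes can be separated by disjoint saturated open sets in $X$, then $X/{\sim}$ is Hausdorff; indeed, the images of such saturated open sets under $q$ are disjoint open neighbourhoods of the two classes.

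First I would observe that for every $(X,P) \in \Gamma$, each set in the clopen bipartition $P_\ast$ of $G^\ast$ from Lemma~\ref{finitePartitionOfGast}(i) (and likewise each set in $P_\times$ from Lemma~\ref{finitePartitionOfGast}(ii)) is saturated with respect to the component equivalence relation ${\G*sim}$ (respectively ${\Gxsim}$). This is immediate from connectedness: since each component of $G^\ast$ (resp.\ $G^\times$) is connected and $P_\ast$ (resp.\ $P_\times$) partitions the space into \emph{finitely many} clopen pieces, every component must lie entirely inside a single piece. Hence each piece is a union of components, i.e.\ saturated.

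Second, given two distinct components $C_1, C_2$ of $G^\ast$, Corollary~\ref{finiteseparator} yields a finite $X \subset V(G)$ and a bipartition $P$ of $\cC_X$ such that the associated clopen bipartition $P_\ast = A_1 \sqcup A_2$ of $G^\ast$ places $C_1$ and $C_2$ on opposite sides. By the first step, both $A_1$ and $A_2$ are saturated, so their images under the quotient map are disjoint open sets in $G^\ast/{\G*sim}$ separating $[C_1]$ and $[C_2]$. The argument for $G^\times/{\Gxsim}$ is identical, now using Corollary~\ref{GxCompsInj} together with Lemma~\ref{finitePartitionOfGast}(ii) to obtain a saturated clopen bipartition of $G^\times$ separating two given components.

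There is no genuine obstacle here: all the combinatorial and topological work has been absorbed into the Separating Lemma~\ref{SuperLemma} and its corollaries, and the present statement is a formal consequence of the general principle that a quotient by components is Hausdorff as soon as distinct components can be separated by clopen sets.
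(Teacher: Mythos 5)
Your argument is correct and is exactly the reasoning the paper intends: the corollary is stated with no written proof precisely because it follows from Corollaries~\ref{GxCompsInj} and~\ref{finiteseparator} in the way you describe, with the saturation of the clopen pieces $P_\ast$ (resp.\ $P_\times$) being automatic from connectedness of components. Nothing is missing.
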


\begin{theorem}
\label{GtimesSimCongGastSim}
For any graph $G$, we have $G^\times/{\Gxsim}\cong G^\ast/{\G*sim}$.
\end{theorem}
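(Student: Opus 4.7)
The plan is to exhibit the natural map from $G^\times/{\Gxsim}$ to $G^\ast/{\G*sim}$ induced by the inclusion $\iota\colon G^\times\hookrightarrow G^\ast$ as a homeomorphism. Concretely, let $q_\times$ and $q_\ast$ be the two quotient maps; then $q_\ast\circ\iota\colon G^\times\to G^\ast/{\G*sim}$ is continuous, and I claim it is constant on each $\Gxsim$-class. Indeed, if $C$ is a component of $G^\times$, then $\iota(C)\subset\hat C$ by definition of $\hat{C}$, so $q_\ast\circ\iota$ sends all of $C$ to the single point $\hat C\in G^\ast/{\G*sim}$. Hence $q_\ast\circ\iota$ factors through $q_\times$, yielding a continuous map
\[
\varphi\colon G^\times/{\Gxsim}\longrightarrow G^\ast/{\G*sim},\qquad C\longmapsto\hat C.
\]

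Next, I would observe that $\varphi$ is precisely the bijection of Lemma~\ref{bijectiveComponentExtension}: it is surjective because every component of $G^\ast$ meets $V^\ast\subset G^\times$ (Lemmas~\ref{lem_componentsandgraphs} and~\ref{infiniteCmeetsGtimes}), and it is injective because distinct components $C_1\neq C_2$ of $G^\times$ give rise to distinct components $\hat C_1\neq\hat C_2$ of $G^\ast$ by Corollary~\ref{GxCompsInj}.

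Finally, I would upgrade this continuous bijection to a homeomorphism by the standard compact-to-Hausdorff argument: the space $G^\times$ is compact Hausdorff, hence so is its quotient $G^\times/{\Gxsim}$; and $G^\ast/{\G*sim}$ is Hausdorff by Corollary~\ref{distinguishComponents}. A continuous bijection from a compact space to a Hausdorff space is automatically closed, hence a homeomorphism. This concludes the proof.

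Given that the substantial technical work has been carried out already—most notably the Separating Lemma and its consequences, which provide the bijection between components and the Hausdorffness of both quotients—I do not expect a genuine obstacle here; the only step that requires a moment's thought is verifying that $q_\ast\circ\iota$ is constant on $\Gxsim$-classes, and that follows immediately from the definition of $\hat C$.
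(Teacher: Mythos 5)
Your proposal is correct and follows essentially the same route as the paper: factor the continuous map $q_\ast\circ\iota$ through the quotient $G^\times/{\Gxsim}$ to obtain the bijection $C\mapsto\hat C$ of Lemma~\ref{bijectiveComponentExtension}, then conclude via compactness of $G^\times/{\Gxsim}$ and Hausdorffness of $G^\ast/{\G*sim}$ (Corollary~\ref{distinguishComponents}). No gaps.
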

\begin{proof}
Let $\hat{\iota}\colon G^\times/{\Gxsim}\to G^\ast/{\G*sim}$ map $C$ to $\hat{C}$. By Lemma~\ref{bijectiveComponentExtension} this is a bijection.
Denote the quotient map $G^\ast\to G^\ast/{\G*sim}$ by $q_\ast$.
Clearly, the diagram
\begin{center}
\begin{tikzcd}
G^\times\arrow[r, right hook->, "\iota"]\arrow[d, two heads] & G^\ast\arrow[d, "q_\ast", two heads] \\
G^\times/{\Gxsim}\arrow[r, "\hat{\iota}"] & G^\ast/{\G*sim}
\end{tikzcd}
\end{center}
commutes. Since $G^\times/{\Gxsim}$ is compact and $G^\ast/{\G*sim}$ is \HD\ (Corollary~\ref{distinguishComponents}), to show that $\hat{\iota}$ is a homeomorphism it suffices to verify continuity. But note that by the quotient topology, $\hat{\iota}$ is continuous if and only if $q_\ast \circ \iota $ is continuous.
\end{proof}

\subsection{Comparing \texorpdfstring{$\cP$}{P} with \texorpdfstring{$G^\times$}{Gx}}

Now that we are able to distinguish distinct components of the remainder by some $\gamma\in\Gamma$, the next step is to use this to show $\Theta\cong G^\ast/{\G*sim}$.
Technically, we will achieve this by showing $\cP\cong G^\times/{\Gxsim}$ instead.

For every $\gamma\in\Gamma$ let $\sigma_\gamma\colon G^\times\to \cP_\gamma$ map every point $x\in G^\times$ to the $\cC\in \cP_\gamma$ whose induced clopen partition class $\closureIn{\bigcup\cC}{\beta G}\cap G^\times\in P(\gamma)_\times$ containing $x$, i.e. including the connected component of $G^\times$ containing $x$.

\begin{lemma}
\label{sigmaGammaContinuousSurjections}
The maps $\sigma_\gamma$ are continuous surjections.
\end{lemma}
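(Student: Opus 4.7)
The plan is to prove both continuity and surjectivity by appealing directly to Lemma and Definition~\ref{finitePartitionOfGast}~(ii), which already tells us that $P(\gamma)_\times=\big\{\,\closureIn{\bigcup\cC}{\beta G}\cap G^\times\mid\cC\in P(\gamma)\,\big\}$ is a finite partition of $G^\times$ into clopen subsets. Unpacking the definition of $\sigma_\gamma$, the preimage $\sigma_\gamma^{-1}(\cC)$ is precisely the clopen block of $P(\gamma)_\times$ corresponding to $\cC$. Since $\cP_\gamma$ is finite discrete, the preimage of every open set is a finite union of these clopen blocks, and hence open in $G^\times$. This yields continuity with no further work.

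For surjectivity, the key point to exploit is our standing assumption $\Gamma=\Gamma'$, which guarantees that $V[\cC]$ is infinite for every $\cC\in P(\gamma)$. The idea is that an infinite closed discrete subset of a normal space must ``leak'' into the Stone-\v{C}ech remainder. Concretely, $V$ is closed discrete in $G$, so $V[\cC]$ is closed discrete in $G$ and hence in $\beta G$; its closure in $\beta G$ is $\beta V[\cC]$, which strictly contains the infinite discrete set $V[\cC]$, so we can pick a point $p\in\closureIn{V[\cC]}{\beta G}\setminus V[\cC]\subset V^{*}$.

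Then I would use two observations to finish. First, $V[\cC]\subset\bigcup\cC$ as subsets of the $1$-complex $G$, whence $p\in\closureIn{V[\cC]}{\beta G}\subset\closureIn{\bigcup\cC}{\beta G}$. Second, by Lemma~\ref{infiniteCmeetsGtimes} we have $V^{*}\subset G^\times$, so $p\in G^\times$ as well. Therefore $p\in\closureIn{\bigcup\cC}{\beta G}\cap G^\times$, which means $\sigma_\gamma(p)=\cC$. As $\cC\in P(\gamma)$ was arbitrary, $\sigma_\gamma$ hits every element of $\cP_\gamma$.

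There is no real obstacle here: the lemma is essentially a bookkeeping consequence of Lemma~\ref{Px}~(ii) plus the cofinality trick that let us assume $\Gamma=\Gamma'$. The one subtle point worth flagging in the write-up is where infiniteness of $V[\cC]$ enters --- this is the sole place in the argument that requires the reduction to $\Gamma'$, and it directly mirrors the analogous appeal to Diestel's \cite[Lemma~3.7]{EndsAndTangles} in the proof of Theorem~\ref{ThetaInvLim}.
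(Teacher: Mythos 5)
Your proof is correct and follows essentially the same route as the paper's: continuity via the clopen partition $P(\gamma)_\times$ of Lemma~\ref{finitePartitionOfGast}, and surjectivity via the infiniteness of $V[\cC]$ (guaranteed by $\Gamma=\Gamma'$) together with Lemma~\ref{infiniteCmeetsGtimes}; you merely spell out the non-emptiness of $\closureIn{\medcup\cC}{\beta G}\cap G^\times$ in more detail than the paper does. One small slip: $V[\cC]$ is closed in $G$ but \emph{not} closed in $\beta G$ (an infinite discrete set cannot be closed in a compact space), which is precisely why $\closureIn{V[\cC]}{\beta G}=\beta V[\cC]$ properly contains it --- the rest of your argument already uses this correctly.
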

\begin{proof}
To see that $\sigma_\gamma$ is continuous, observe that
\begin{align*}
\sigma_\gamma^{-1}(\cC)=\closureIn{\medcup\cC}{\beta G}\cap G^\times\in P(\gamma)_\times
\end{align*}
and recall that partition classes of $P(\gamma)_\times$ are clopen in $G^\times$.

The map $\sigma_\gamma$ is surjective: since every $\cC\in \cP_\gamma$ is such that $V[\cC]$ is infinite, Lemma~\ref{infiniteCmeetsGtimes} ensures that $\closureIn{\bigcup\cC}{\beta G}\cap G^\times$ is non-empty.
\end{proof}

\begin{lemma}
\label{sigmaGammeCompatible}
The maps $\sigma_\gamma$ are compatible.
\end{lemma}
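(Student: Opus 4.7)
Recall that compatibility of the $\sigma_\gamma$ means that whenever $\gamma\le\gamma'$ in $\Gamma$, we have $f_{\gamma',\gamma}\circ\sigma_{\gamma'}=\sigma_\gamma$. Fix therefore $\gamma=(X,P)\le\gamma'=(X',P')$ and an arbitrary point $x\in G^\times$, and write $\cC':=\sigma_{\gamma'}(x)\in P'$ and $\cC:=f_{\gamma',\gamma}(\cC')\in P$. By the definitions of $\sigma_{\gamma'}$ and $f_{\gamma',\gamma}$ we have $x\in\closureIn{\bigcup\cC'}{\beta G}$ and $\cC' \rest X \subseteq \cC$, and we must show $\sigma_\gamma(x)=\cC$. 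Since the sets $\closureIn{\bigcup\cD}{\beta G}\cap G^\times$ with $\cD\in P$ form a partition of $G^\times$ by Lemma~\ref{Px}, it suffices to prove that $x\in\closureIn{\bigcup\cC}{\beta G}$, and for this it is enough to verify the set-theoretic containment $\bigcup\cC'\subseteq\bigcup\cC$ inside the 1-complex $G$.

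To see the latter, pick any $C'\in\cC'$; this is a component of $G-X'$. By definition of the bonding map $\cb_{X',X}$, the unique component $C$ of $G-X$ that contains $C'$ satisfies $C\in\cC'\rest X\subseteq\cC$. Since $V(C')\subseteq V(C)$ and every edge of $C'$ is an edge of $C$, we obtain $C'\subseteq C$ as subspaces of the 1-complex of $G$; taking the union over all $C'\in\cC'$ yields $\bigcup\cC'\subseteq\bigcup\cC$, as required. Applying the closure operator in $\beta G$ then gives $x\in\closureIn{\bigcup\cC'}{\beta G}\subseteq\closureIn{\bigcup\cC}{\beta G}$, hence $\sigma_\gamma(x)=\cC=f_{\gamma',\gamma}(\sigma_{\gamma'}(x))$, which finishes the proof.

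I do not anticipate any serious obstacle here: the only thing that might at first look like a subtlety is that when passing from $X$ to the larger vertex set $X'$, a component of $G-X$ may fragment and additionally ``lose'' the edges through vertices of $X'\setminus X$. But since we only need the inclusion $\bigcup\cC'\subseteq\bigcup\cC$ (and not equality), and these extra deletions only \emph{remove} points from the smaller side, the argument goes through directly.
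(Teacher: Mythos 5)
Your proof is correct and follows essentially the same route as the paper: both arguments reduce compatibility to the containment $\closureIn{\bigcup\cC'}{\beta G}\cap G^\times\subseteq\closureIn{\bigcup\cC}{\beta G}\cap G^\times$ for $\cC'\in P'$ and $\cC=f_{\gamma',\gamma}(\cC')$, which the paper phrases as ``$P'_\times$ refines $P_\times$'' and you verify pointwise via $\bigcup\cC'\subseteq\bigcup\cC$. The extra detail you supply (that each component of $G-X'$ is a subspace of the component of $G-X$ containing it) is exactly the step the paper leaves implicit.
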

\begin{proof}
For this assertion it suffices to show that whenever $(X,P)\le (X',P')$, then we have $P_\times \preceq P'_\times$, i.e.\ the finite clopen partition $P'_\times$ refines that partition of $G^\times$ induced by $P_\times$.
To see this, consider any $\cC' \in P'$. Since $P'$ refines $P\rise X'$, there is a unique $\cC \in P$ with $\cC'\rest X \subset \cC$.
Thus $\closureIn{\bigcup\cC'}{\beta G}\cap G^\times\subset\closureIn{\bigcup\cC}{\beta G}\cap G^\times$ follows.
\end{proof}

We put $\sigma=\invLim \sigma_\gamma\colon G^\times\to\cP$, and we aim to show that $\sigma$ gives rise to a homeomorphism between $G^\times/{\Gxsim}$ and $\cP$.

\begin{lemma}
\label{sigmaCTSsurjection}
The map $\sigma\colon G^\times\to\cP$ is a continuous surjection.
\end{lemma}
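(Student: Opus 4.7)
The plan is to apply the standard inverse-limit principle already invoked in the proof of Theorem~\ref{ThetaInvLim}, namely \cite[Corollary~3.2.16]{EngelkingBook}: a compatible family of continuous surjections from a compact space onto the Hausdorff terms of an inverse system combines into a continuous surjection onto the inverse limit. Lemmas~\ref{sigmaGammaContinuousSurjections} and~\ref{sigmaGammeCompatible} provide exactly such a family $(\sigma_\gamma)_{\gamma \in \Gamma}$, while $G^\times$ is compact by the Fact recorded after Definition~\ref{def_Gcross} and each $\cP_\gamma$ is finite discrete, hence Hausdorff. Thus continuity of $\sigma$ is immediate from the universal property of the inverse limit, and surjectivity drops out of the cited corollary.

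If I wanted to argue from first principles rather than quoting Engelking, I would fix any point $(\cC_\gamma)_{\gamma \in \Gamma} \in \cP$ and consider the closed subsets $F_\gamma := \sigma_\gamma^{-1}(\cC_\gamma) \subset G^\times$. Each $F_\gamma$ is nonempty by Lemma~\ref{sigmaGammaContinuousSurjections}, and compatibility of the $\sigma_\gamma$ with the bonding maps (Lemma~\ref{sigmaGammeCompatible}) together with directedness of $\Gamma$ (Lemma~\ref{directed}) yields the finite intersection property: for any finitely many indices $\gamma_1, \dots, \gamma_n$, an upper bound $\gamma \in \Gamma$ satisfies $F_\gamma \subset F_{\gamma_1} \cap \dots \cap F_{\gamma_n}$. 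Compactness of $G^\times$ then produces a point in $\bigcap_{\gamma \in \Gamma} F_\gamma$, which by construction is mapped under $\sigma$ to the prescribed $(\cC_\gamma)_{\gamma \in \Gamma}$.

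I do not anticipate any genuine obstacle here: all substantive work has already been done in assembling the compatible family of continuous surjections $\sigma_\gamma$ and in establishing compactness of $G^\times$. The present lemma is essentially a bookkeeping step that packages these ingredients into the single map $\sigma$, preparing the ground for the next stage, where one will upgrade $\sigma$ to a homeomorphism $G^\times / {\Gxsim} \to \cP$ (and then compose with Theorem~\ref{GtimesSimCongGastSim} and Theorem~\ref{ThetaInvLim} to conclude $\Theta \cong G^\ast / {\G*sim}$).
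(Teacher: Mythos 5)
Your proof is correct and takes essentially the same route as the paper: it combines Lemmas~\ref{sigmaGammaContinuousSurjections} and~\ref{sigmaGammeCompatible} with \cite[Corollary~3.2.16]{EngelkingBook} and the compactness of $G^\times$. The additional first-principles argument for surjectivity via the finite intersection property is a correct (if optional) elaboration of what the cited corollary already provides.
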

\begin{proof}
We combine Lemmas~\ref{sigmaGammaContinuousSurjections} and~\ref{sigmaGammeCompatible} with the fact that compatible continuous surjections from a compact space onto \HD\ spaces combine into one continuous surjection onto the inverse limit of their image spaces (cf.~\cite[Corollary~3.2.16]{EngelkingBook}).
\end{proof}

\begin{lemma}
\label{sigmaFibresConnected}
The fibres of $\sigma$ are precisely the connected components of $G^\times$.
\end{lemma}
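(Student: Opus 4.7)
The plan is to prove two inclusions: every connected component of $G^\times$ is contained in a single fibre of $\sigma$, and conversely distinct components lie in distinct fibres. Together with the surjectivity of $\sigma$ from Lemma~\ref{sigmaCTSsurjection}, this identifies the fibres with the components.

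For the first inclusion, I would exploit the fact that, by construction, the preimages
\[ \sigma_\gamma^{-1}(\cC) = \closureIn{\medcup\cC}{\beta G}\cap G^\times, \qquad \cC \in \cP_\gamma, \]
form the finite \emph{clopen} partition $P(\gamma)_\times$ of $G^\times$ (Lemma and Definition~\ref{Px}). Since each target space $\cP_\gamma$ is discrete, any connected subspace of $G^\times$ must be mapped by $\sigma_\gamma$ to a single point. As this holds for every $\gamma \in \Gamma$, the combined map $\sigma = \invLim \sigma_\gamma$ is constant on every connected component of $G^\times$, so each such component is contained in a fibre of $\sigma$.

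For the reverse direction, the key input is Corollary~\ref{GxCompsInj}: for any two distinct components $C_1,C_2$ of $G^\times$ there exists $\gamma \in \Gamma$ such that the clopen partition $P(\gamma)_\times$ separates $C_1$ from $C_2$. Hence $\sigma_\gamma(C_1) \neq \sigma_\gamma(C_2)$, which forces $\sigma(C_1) \neq \sigma(C_2)$ as elements of the inverse limit $\cP$. Thus distinct components are mapped to distinct points of $\cP$.

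Combining the two steps, each non-empty fibre $\sigma^{-1}(p)$ is, on the one hand, a union of connected components of $G^\times$ (by the first step), and on the other hand can meet at most one such component (by the second step). Hence every fibre coincides with a single component, and every component arises this way by the surjectivity of $\sigma$. No serious obstacle is anticipated: the argument is essentially a direct application of Corollary~\ref{GxCompsInj} together with the clopenness of the partitions $P(\gamma)_\times$; the substantive work was already done in proving the Separating Lemma.
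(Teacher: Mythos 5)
Your proposal is correct and follows essentially the same route as the paper: the paper also observes that each $\sigma_\gamma$ is constant on connected components of $G^\times$ (because the preimages form the clopen partition $P(\gamma)_\times$) and then invokes Corollary~\ref{GxCompsInj} to separate distinct components by some $\sigma_\gamma$. Your write-up merely spells out the combination of the two inclusions with surjectivity a bit more explicitly than the paper does.
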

\begin{proof}
First, it is clear by the definition of the $\sigma_\gamma$ that every $\sigma_\gamma$ is constant on connected components of $G^\times$.
Conversely, we need to argue that for any pair of distinct components $C_1$ and $C_2$ of $G^\times$ there is some $\sigma_\gamma$ with $\sigma_\gamma \restriction C_1 \neq \sigma_\gamma \restriction C_1$.
Such a $\sigma_\gamma$ is provided by Corollary~\ref{GxCompsInj}.
\end{proof}

\begin{proposition}
\label{GtimesSimCongP}
$G^\times/{\Gxsim}\cong\cP$.
\end{proposition}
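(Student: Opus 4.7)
The plan is to factor the map $\sigma \colon G^\times \to \cP$ of Lemma~\ref{sigmaCTSsurjection} through the quotient $G^\times/{\Gxsim}$, and then argue that the induced map is a homeomorphism. Essentially all of the real work has already been done in the preceding lemmas; this final step is just assembly.

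First, by Lemma~\ref{sigmaFibresConnected} the fibres of $\sigma$ are exactly the connected components of $G^\times$, so $\sigma$ is constant on every ${\Gxsim}$-class. Hence by the universal property of the quotient topology, $\sigma$ induces a continuous map $\hat{\sigma} \colon G^\times/{\Gxsim} \to \cP$ that makes the diagram
\begin{center}
\begin{tikzcd}
G^\times \arrow[r, "\sigma"] \arrow[d, two heads] & \cP \\
G^\times/{\Gxsim} \arrow[ur, "\hat{\sigma}"', dashed] &
\end{tikzcd}
\end{center}
commute. Since the fibres of $\sigma$ coincide with the ${\Gxsim}$-classes, $\hat{\sigma}$ is injective; and since $\sigma$ is surjective by Lemma~\ref{sigmaCTSsurjection}, so is $\hat{\sigma}$.

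To upgrade this continuous bijection to a homeomorphism, observe that $G^\times$ is compact Hausdorff (stated as a Fact after Definition~\ref{def_Gcross}), so its quotient $G^\times/{\Gxsim}$ is compact. Moreover, $G^\times/{\Gxsim}$ is Hausdorff by Corollary~\ref{distinguishComponents}, and $\cP$ is Hausdorff as an inverse limit of discrete finite (hence Hausdorff) spaces. A continuous bijection from a compact space onto a Hausdorff space is automatically a homeomorphism, so $\hat{\sigma}$ witnesses $G^\times/{\Gxsim} \cong \cP$, as desired.

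There is no genuine obstacle at this stage: the only nontrivial ingredients are surjectivity of $\sigma$ (which used that $\Gamma = \Gamma'$ together with Lemma~\ref{infiniteCmeetsGtimes}) and the separation of distinct components by some $\sigma_\gamma$ (Corollary~\ref{GxCompsInj}), both established earlier. What might conceivably go wrong is the Hausdorffness of the quotient, but this is precisely the content of Corollary~\ref{distinguishComponents}, which itself was the payoff of the Separating Lemma.
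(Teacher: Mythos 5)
Your proposal is correct and follows essentially the same route as the paper: the paper invokes the standard fact that a continuous surjection from a compact space onto a Hausdorff space induces a homeomorphism from its fibre-quotient, combined with Lemmas~\ref{sigmaCTSsurjection} and~\ref{sigmaFibresConnected}, and your argument is just the unpacked version of that fact. (Your appeal to Corollary~\ref{distinguishComponents} is harmless but unnecessary: Hausdorffness of $\cP$ alone, together with compactness of the quotient, already forces $\hat{\sigma}$ to be a homeomorphism.)
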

\begin{proof}
It is well-known that every continuous surjection $f\colon X \twoheadrightarrow Y$ from a compact space $X$ onto a Hausdorff space $Y$ gives rise to a homeomorphism between the quotient $X  / \{\,f^{-1}(y)\mid y \in Y\,\}$ over the fibres of $f$, and the space $Y$.
Thus, it follows from Lemmas~\ref{sigmaCTSsurjection} and~\ref{sigmaFibresConnected}, that
\[G^\times/{\Gxsim} = G^\times /\{\,\sigma^{-1}(\xi)\mid\xi \in \cP\,\} \cong\cP. \qedhere \]
\end{proof}

We now have all ingredients to prove the following key result that is essential for the proof our main theorem:

\begin{theorem}
\label{mainresult1}
The tangle space $\Theta$ of any graph $G$ is homeomorphic to the quotient $G^* /{\G*sim}$ of the Stone-\v{C}ech remainder $G^*$ of $G$, where each connected component of $G^*$ is collapsed to a single point. 
\end{theorem}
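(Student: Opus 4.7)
The plan is to deduce the theorem as an immediate consequence of the chain of homeomorphisms already established earlier in the paper. Specifically, Theorem~\ref{ThetaInvLim} identifies the tangle space with the inverse limit of finite discrete spaces, namely $\Theta \cong \cP$. Proposition~\ref{GtimesSimCongP} identifies this inverse limit with the quotient of the auxiliary remainder, $\cP \cong G^\times/{\Gxsim}$. Finally, Theorem~\ref{GtimesSimCongGastSim} identifies the auxiliary quotient with the full quotient, $G^\times/{\Gxsim} \cong G^\ast/{\G*sim}$. Composing these three homeomorphisms yields the desired statement.

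Concretely, I would write a short display chasing the three identifications in order:
\begin{align*}
\Theta \;\cong\; \cP \;\cong\; G^\times/{\Gxsim} \;\cong\; G^\ast/{\G*sim},
\end{align*}
citing Theorem~\ref{ThetaInvLim}, Proposition~\ref{GtimesSimCongP}, and Theorem~\ref{GtimesSimCongGastSim} respectively. If one wishes to be slightly more explicit about the first and third identifications, one could note that under $\Theta \cong \cP$ an $\aleph_0$-tangle $\tau$ is sent to the compatible family $(\cC(\tau,\gamma))_{\gamma \in \Gamma}$, while under $G^\times/{\Gxsim} \cong G^\ast/{\G*sim}$ a component $C$ of $G^\times$ is sent to the unique component $\hat{C}$ of $G^\ast$ containing it (cf.\ Lemma~\ref{bijectiveComponentExtension}), and then the composite map sends a component $\hat C$ of $G^\ast$ to the $\aleph_0$-tangle $\tau$ whose partition-class selection $\cC(\tau,\gamma)$ is determined, for each $\gamma=(X,P) \in \Gamma$, by the unique $\cC \in P$ with $\hat{C} \cap G^\times \subset \closureIn{\bigcup \cC}{\beta G}$.

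There is no real obstacle here: all of the substantive work has been done in the previous sections. The one conceptual point worth flagging, should one wish to emphasise it, is that passing from $G^\times$ to $G^\ast$ (Theorem~\ref{GtimesSimCongGastSim}) relied crucially on the Separating Lemma~\ref{SuperLemma} and the fact (Lemma~\ref{lem_componentsandgraphs}, Lemma~\ref{infiniteCmeetsGtimes}) that every component of $G^\ast$ meets $V^\ast \subset G^\times$, so that collapsing components on $G^\times$ and on $G^\ast$ genuinely agree. Hence the present theorem is essentially the bookkeeping step that packages those results into the form needed in the next section to prove the Main Theorem.
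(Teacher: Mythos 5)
Your proposal is correct and matches the paper's own proof exactly: the paper likewise obtains the result by composing the three homeomorphisms $G^\ast/{\G*sim}\cong G^\times/{\Gxsim}\cong\cP\cong\Theta$ from Theorem~\ref{GtimesSimCongGastSim}, Proposition~\ref{GtimesSimCongP} and Theorem~\ref{ThetaInvLim}. The additional remarks on how the composite map acts are accurate but not needed.
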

\begin{proof}[Proof of Theorem~\ref{mainresult1}]
\label{proofthm}
Theorem~\ref{GtimesSimCongGastSim}, Proposition~\ref{GtimesSimCongP} and Theorem~\ref{ThetaInvLim} yield
\[G^\ast/{\G*sim}\cong G^\times/{\Gxsim}  \cong \cP \cong \Theta. \qedhere \]
\end{proof}

We write $\tau_\ast$ for the component of $G^\ast$ corresponding to $\tau$ and $\tau_\times$ for the component $\tau_\ast\cap G^\times$ of $G^\times$ corresponding to $\tau$ (cf. Theorem~\ref{mainresult1} and Lemma~\ref{bijectiveComponentExtension}).

\begin{theorem}\label{tangleComponents}\label{tauTimesComponent}
If $\tau$ is an $\aleph_0$-tangle of $G$, then
\begin{enumerate}
\item $\tau_\ast = G^\ast\cap\bigcap_{\gamma\in\Gamma}\closureIn{ G[\tau,\gamma] }{\beta G}$ and
\item $\tau_\times =\bigcap_{\gamma\in\Gamma}\closureIn{\bigcup\cC(\tau,\gamma)}{\beta G}=G^\times\cap\bigcap_{\gamma\in\Gamma}\closureIn{G[\tau,\gamma]}{\beta G}=\tau_\ast\cap G^\times$
\end{enumerate}
are the components of $G^\ast$ and $G^\times$ corresponding to $\tau$ respectively.
\end{theorem}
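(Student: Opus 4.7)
The plan is to push the chain of isomorphisms $\Theta\cong\cP\cong G^\times/{\Gxsim}$ from Theorem~\ref{mainresult1} through fibre-by-fibre, and then lift the resulting description of $\tau_\times$ to $\tau_\ast$ via the bijection from Lemma~\ref{bijectiveComponentExtension}. Concretely I would prove~(ii) first, and then deduce~(i) by exploiting that the sets $\closureIn{G[\tau,\gamma]}{\beta G}\cap G^\ast$ are classes of the finite clopen partition $P(\gamma)_\ast$ of $G^\ast$ from Lemma and Definition~\ref{finitePartitionOfGast}.

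For (ii), note that the final equality $\tau_\times=\tau_\ast\cap G^\times$ is simply the definition of $\tau_\times$ given just before the theorem. To prove $\tau_\times=\bigcap_{\gamma\in\Gamma}\closureIn{\bigcup\cC(\tau,\gamma)}{\beta G}$, I would use Lemma~\ref{sigmaFibresConnected} to identify $\tau_\times$ with the fibre of $\sigma=\invLim\sigma_\gamma$ over the thread $(\cC(\tau,\gamma))_{\gamma\in\Gamma}\in\cP$ associated to $\tau$ via Theorem~\ref{ThetaInvLim}. Since $\sigma_\gamma^{-1}(\cC(\tau,\gamma))=\closureIn{\bigcup\cC(\tau,\gamma)}{\beta G}\cap G^\times$ by construction, this immediately yields
\[\tau_\times=G^\times\cap\bigcap_{\gamma\in\Gamma}\closureIn{\bigcup\cC(\tau,\gamma)}{\beta G}.\]
To eliminate the leading factor $G^\times$, I would observe that if $x\in O_{\beta G}(v)$ then choosing $\gamma\in\Gamma$ with $v\in X(\gamma)$ (possible by Lemma~\ref{Gamma'cofinal}) makes $\bigcup\cC(\tau,\gamma)$ a subgraph of $G\setminus O(v)$, so $x$ is separated from its closure by the open set $O_{\beta G}(v)$. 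The middle equality reduces, coordinate-wise, to $\closureIn{G[\tau,\gamma]}{\beta G}\cap G^\times=\closureIn{\bigcup\cC(\tau,\gamma)}{\beta G}\cap G^\times$, where ``$\supseteq$'' is trivial and ``$\subseteq$'' follows from Lemma~\ref{SCdecomposition}, since intersecting its right-hand side $O_{\beta G}[X(\gamma)]\sqcup\closureIn{\bigcup\cC(\tau,\gamma)}{\beta G}$ with $G^\times\subseteq\beta G\setminus O_{\beta G}[V]$ discards the first summand.

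For (i), I would use that $A_\gamma:=\closureIn{G[\tau,\gamma]}{\beta G}\cap G^\ast$ is exactly the class of the clopen partition $P(\gamma)_\ast$ indexed by $\cC(\tau,\gamma)$. Since $\tau_\ast$ is connected and by~(ii) contains $\tau_\times\subseteq A_\gamma$, it is contained in $A_\gamma$ for every $\gamma$, yielding $\tau_\ast\subseteq G^\ast\cap\bigcap_\gamma\closureIn{G[\tau,\gamma]}{\beta G}$. Conversely, any point $x$ in the right-hand side sits in some component $\tau'_\ast$ of $G^\ast$, whose membership in $P(\gamma)_\ast$ is dictated by $\cC(\tau',\gamma)$. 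Since $x\in A_\gamma$ too, the disjointness of $P(\gamma)_\ast$ forces $\cC(\tau',\gamma)=\cC(\tau,\gamma)$ for every $\gamma\in\Gamma$, whence $\tau'=\tau$ by Theorem~\ref{ThetaInvLim}.

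The only subtle point is the first reduction in (ii), where one has to verify that the intersection is automatically contained in $G^\times$ rather than merely in $\beta G$; everything else is formal manipulation within the framework already set up in Sections~\ref{section_inversetangles} and~\ref{sec5}.
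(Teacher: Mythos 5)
Your proposal is correct and follows essentially the same route as the paper: part (ii) is read off from the fibre description of $\sigma$ together with Lemma~\ref{SCdecomposition} and the identity $G^\times=\bigcap_{\gamma}\big(\beta G\setminus O_{\beta G}[X(\gamma)]\big)$, and part (i) uses that $\closureIn{G[\tau,\gamma]}{\beta G}\cap G^\ast$ is a class of the clopen partition $P(\gamma)_\ast$ containing the connected set $\tau_\ast\supseteq\tau_\times$. The only cosmetic difference is that for the reverse inclusion in (i) you invoke injectivity of the thread map from Theorem~\ref{ThetaInvLim}, where the paper cites Corollary~\ref{finiteseparator} to see that the right-hand side contains at most one component; these are interchangeable within the framework already established.
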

In statement (i) of the theorem, the intersection with $G^\ast$ is really necessary---we will see the reason for this in Proposition~\ref{componentGraphInteraction}.
\begin{proof}[Proof of Theorem~\ref{tangleComponents}]
We show (ii) first. The first equality is evident from the definition of $\sigma$, and the centre equality follows from Lemma~\ref{SCdecomposition} with 
\begin{align*}
G^\times=\bigcap_{\gamma\in\Gamma}\big(\beta G\setminus O_{\beta G}[X(\gamma)]\big).
\end{align*}

(i). By Corollary~\ref{finiteseparator}, the right-hand side contains at most one connected component of $G^\ast$. We have $\tau_\times\subset\closure{G[\tau,\gamma]}$ for all $\gamma\in\Gamma$ by (ii), so $\tau_\ast=\hat{\tau}_\times\subset\closure{G[\tau,\gamma]}$ holds for all $\gamma$ as well (see Lemma~\ref{Px}), finishing the proof.
\end{proof}

\section{Obtaining the tangle compactification\texorpdfstring{\\}{}from the Stone-\texorpdfstring{\v{C}}{C}ech compactification}
\label{sec6}

Now that we know $\Theta\cong G^\ast/{\G*sim}$, our next target is the proof of our main result, Theorem~\ref{mainresult2}. 
For this, recall that $\cO_{\beta G}(X,\cC)=\closureIn{G[X,\cC]}{\beta G}\setminus G[X]$,
and that Lemma~\ref{finitePartitionOfGast} and Theorem~\ref{tangleComponents} ensure that $\cO_{\beta G}(X,\cC)$ is ${\G*sim}$-closed and includes precisely the components $\tau_\ast$ of $G^\ast$ with $\cC\in U(\tau,X)$.

\begin{lemma}
\label{magicLemma}
Let $A\subset\beta G$ be closed and ${\G*sim}$-closed, and let $\tau$ be an $\aleph_0$-tangle of $G$.
If $A$ avoids $\tau_\ast$, then there are $X\in\cX$ and $\cC\subset\cC_X$ with $\tau_\ast\subset\cO_{\beta G}(X,\cC)\subset \beta G\setminus A$.
\end{lemma}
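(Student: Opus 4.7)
The plan is to apply the Separating Lemma (Lemma~\ref{SuperLemma}) to $A$ and the ``compact version'' $\tau_\times$ of the tangle component, and then translate the conclusion back to $\tau_\ast$ using Theorem~\ref{tangleComponents}.

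First I would check that $A$ and $\tau_\times$ fit the hypotheses of the Separating Lemma. The set $A$ is closed in $\beta G$ by assumption; being ${\G*sim}$-closed, it is in particular tame. For $\tau_\times$, note that $G^\times$ is closed in $\beta G$ (being compact Hausdorff), and $\tau_\times$ is a component of $G^\times$, hence closed in $G^\times$ and therefore in $\beta G$. Since $\tau_\times\subset G^\times$ is disjoint from $O_{\beta G}[V]$, the tameness condition holds vacuously. Disjointness follows from $\tau_\times\subset\tau_\ast$ and the assumption $A\cap\tau_\ast=\emptyset$.

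Applying Lemma~\ref{SuperLemma} then supplies a finite $X\subset V$ and a bipartition $\{\cC_1,\cC_2\}$ of $\cC_X$ such that
\[
A\subset\closureIn{G[X,\cC_1]}{\beta G}\quad\text{and}\quad\tau_\times\subset\closureIn{G[X,\cC_2]}{\beta G}.
\]
Set $\cC:=\cC_2$. The next step is to verify $\cC\in U(\tau,X)$. Combining the second inclusion with Lemma~\ref{SCdecomposition} and $\tau_\times\cap O_{\beta G}[X]=\emptyset$ yields $\tau_\times\subset\closureIn{\bigcup\cC}{\beta G}$. On the other hand, with $\gamma=(X,\{\cC,\cC_X\setminus\cC\})$, Theorem~\ref{tangleComponents}(ii) gives $\tau_\times\subset\closureIn{\bigcup\cC(\tau,\gamma)}{\beta G}$. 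Since $\tau_\times\neq\emptyset$ and $P(\gamma)_\times$ is a partition of $G^\times$ (Lemma~\ref{finitePartitionOfGast}), these two memberships force $\cC=\cC(\tau,\gamma)\in U(\tau,X)$. Therefore, by Theorem~\ref{tangleComponents}(i), $\tau_\ast\subset\closureIn{G[\tau,\gamma]}{\beta G}=\closureIn{G[X,\cC]}{\beta G}$, and since $\tau_\ast\subset G^\ast$ is disjoint from $G[X]$, we conclude $\tau_\ast\subset\cO_{\beta G}(X,\cC)$.

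Finally, I would show $\cO_{\beta G}(X,\cC)\cap A=\emptyset$. By Theorem~\ref{thm_stonecechchar}(v),
\[
\closureIn{G[X,\cC]}{\beta G}\cap\closureIn{G[X,\cC_X\setminus\cC]}{\beta G}=\closureIn{G[X]}{\beta G}=G[X],
\]
so $\cO_{\beta G}(X,\cC)=\closureIn{G[X,\cC]}{\beta G}\setminus G[X]$ is disjoint from $\closureIn{G[X,\cC_1]}{\beta G}\supset A$, yielding $\cO_{\beta G}(X,\cC)\subset\beta G\setminus A$.

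The main obstacle is the delicate handling of $\tau_\ast$ versus $\tau_\times$: one cannot directly apply the Separating Lemma to $\tau_\ast$ and $A$ because $\tau_\ast$ need not be closed in $\beta G$ (and $A$ itself might not be tame with respect to~$G^\times$). The key idea is to use $\tau_\times$ as the closed, tame representative of the component and then lift the separating partition back to $\tau_\ast$ via Theorem~\ref{tangleComponents}(i), which requires the verification that $\cC$ ends up in the ultrafilter $U(\tau,X)$.
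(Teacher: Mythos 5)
Your argument is correct and rests on the same key tool as the paper's proof, namely the Separating Lemma~\ref{SuperLemma}; the only real difference is the choice of the second tame set. The paper applies the lemma directly to $A$ and $\closureIn{\tau_\ast}{\beta G}$, so that the conclusion $\tau_\ast\subset\cO_{\beta G}(X,\cC_2)\subset\beta G\setminus A$ drops out in one line, whereas you apply it to $A$ and the compact representative $\tau_\times$, and therefore need the extra (correctly executed) step of identifying $\cC_2$ with $\cC(\tau,\gamma)$ via Theorem~\ref{tangleComponents}\,(ii) and the disjointness of the clopen classes of $P(\gamma)_\times$, in order to lift the inclusion from $\tau_\times$ back to $\tau_\ast$ through Theorem~\ref{tangleComponents}\,(i). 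What your variant buys is a completely painless verification of the hypotheses of the Separating Lemma: disjointness of $\tau_\times$ from $A$ is immediate from $\tau_\times\subset\tau_\ast$, whereas with $\closureIn{\tau_\ast}{\beta G}$ one should in principle still argue that the extra points this closure picks up inside $G$ (the set $X_\tau$, resp.\ $\Delta\omega$, by Proposition~\ref{componentGraphInteraction}) do not meet $A$ --- a point the paper's one-line proof passes over. The price you pay is the additional ultrafilter bookkeeping; both routes are sound.
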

\begin{proof}
By the Separating Lemma~\ref{SuperLemma} there is $X\in\cX$ and a bipartition $\{\cC_1,\cC_2\}$ of $\cC_X$ with $A\subset\closure{G[X,\cC_1]}$ and $\closure{\tau_\ast}\subset\closure{G[X,\cC_2]}$.
Then $\tau_\ast\subset\cO_{\beta G}(X,\cC_2)\subset \beta G\setminus A$.
\end{proof}

We write $\widehat{\beta G}$ for the topological space obtained from $\beta G$ by declaring $G$ to be open, and we write $\hat{G}$ for the quotient $\widehat{\beta G}/{\G*sim}$.
Since $\beta G$ contains $G$ as a subspace, all the open sets of $G$ are open in $\widehat{\beta G}$ as well; and since ${\G*sim}$ does not affect $G$, all the open sets of $G$ are also open in $\hat{G}$.
As a consequence, the open sets of $\beta G$ plus the open sets of $G$ form a basis for the topology of $\widehat{\beta G}$, yielding that

\begin{lemma}\label{GhatBasis}
The open sets of $(\beta G)/{\G*sim}$ plus the open sets of $G$ form a basis for the topology of $\hat{G}$.\qed
\end{lemma}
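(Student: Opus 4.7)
The plan is to verify two things: (A) each of the two proposed families consists of open subsets of $\hat G$, and (B) their union is a basis in the sense that every open set of $\hat G$ is a union of members.

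For (A) I would argue as follows. Since $G$ is open in $\widehat{\beta G}$ by construction and ${\G*sim}$ does not identify any point of $G$ with anything else, the quotient map $q\colon\widehat{\beta G}\to\hat G$ restricts to an open topological embedding of $G$ into $\hat G$, and hence every open subset of $G$ is open in $\hat G$. Likewise, the identity $\beta G\to\widehat{\beta G}$ is continuous (the target topology is just a refinement), so every open ${\G*sim}$-saturated subset of $\beta G$ remains open in $\widehat{\beta G}$ and therefore projects to an open subset of $\hat G$.

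For (B) I would fix an open $W\subset\hat G$ and a point $x\in W$ and hunt for a basis element around $x$ inside $W$. When $x\in q[G]$, the set $W\cap q[G]$ is an open subset of $q[G]\cong G$ and suffices. The interesting case is $x=q(\tau_\ast)$ for a component $\tau_\ast$ of $G^\ast$ corresponding to an $\aleph_0$-tangle $\tau$; here the goal is to find an open ${\G*sim}$-saturated $U\subset\beta G$ with $\tau_\ast\subset U\subset q^{-1}(W)$, for then $q[U]$ is the desired open set of $(\beta G)/{\G*sim}$. To manufacture $U$, I would apply Lemma~\ref{magicLemma} to the set
\[A:=\bigl(G^\ast\setminus q^{-1}(W)\bigr)\cup\closureIn{G\setminus W}{\beta G}.\]
Both summands are closed in $\beta G$ (the first because $q^{-1}(W)\cap G^\ast$ is open in the closed subspace $G^\ast$; the second by definition). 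Both $A\cap\tau_\ast=\emptyset$ and the ${\G*sim}$-closedness of $A$ would then follow from the same observation: if a component $\sigma_\ast$ of $G^\ast$ is contained in $q^{-1}(W)$, then openness of $q^{-1}(W)$ in $\widehat{\beta G}$ supplies each $y\in\sigma_\ast$ with a $\beta G$-open neighborhood inside $q^{-1}(W)$, which is in particular disjoint from $G\setminus W$, so $\sigma_\ast$ avoids $\closureIn{G\setminus W}{\beta G}$ (and trivially $G^\ast\setminus q^{-1}(W)$). Lemma~\ref{magicLemma} would then deliver $X$ and $\cC$ with $\tau_\ast\subset\cO_{\beta G}(X,\cC)\subset\beta G\setminus A$; Lemma~\ref{finitePartitionOfGast} ensures that $\cO_{\beta G}(X,\cC)$ is ${\G*sim}$-saturated, and splitting the disjointness with $A$ across $G^\ast$ and $G$ yields $\cO_{\beta G}(X,\cC)\cap G^\ast\subset q^{-1}(W)$ and $\cO_{\beta G}(X,\cC)\cap G\subset W$, so $U:=\cO_{\beta G}(X,\cC)$ works.

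The main obstacle I expect is the mismatch of topologies: openness of $q^{-1}(W)$ holds only in $\widehat{\beta G}$, whereas Lemma~\ref{magicLemma} produces sets open in the strictly coarser $\beta G$. A naive application with only $A_0:=G^\ast\setminus q^{-1}(W)$ would control the $G^\ast$-part of $\cO$ but allow its $G$-part to drift outside $W$. The fix---adjoining $\closureIn{G\setminus W}{\beta G}$ to $A_0$---absorbs precisely these rogue $G$-points, and the nontrivial verification is that this enlargement stays ${\G*sim}$-closed, which ultimately rests on the pointwise characterization of openness of ${\G*sim}$-saturated sets in $\widehat{\beta G}$.
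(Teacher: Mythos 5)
Your overall strategy is sound, and it in fact supplies the argument that the paper leaves implicit: the paper states this lemma with a \qed, deriving it only from the observation that the open sets of $\beta G$ together with the open sets of $G$ form a basis of $\widehat{\beta G}$, whereas the genuine content --- that a component $\tau_\ast$ contained in the saturated $\widehat{\beta G}$-open set $q^{-1}(W)$ admits a \emph{saturated} $\beta G$-open neighbourhood inside $q^{-1}(W)$ --- is exactly what you extract from Lemma~\ref{magicLemma}. Your verification that $A$ is ${\G*sim}$-closed and avoids $\tau_\ast$, and the final splitting of $\cO_{\beta G}(X,\cC)\cap A=\emptyset$ across $G^\ast$ and $G$, are correct.

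There is, however, one false step: you justify closedness of $G^\ast\setminus q^{-1}(W)$ by calling $G^\ast$ a closed subspace of $\beta G$. When $G$ is not locally finite --- the only case in which declaring $G$ open changes anything --- $G$ is not open in $\beta G$, so $G^\ast$ is \emph{not} closed; this is precisely why the paper retreats to the auxiliary remainder $G^\times$ in Section~\ref{sec_beta4}. Consequently your $A$ need not be closed: for the infinite star and $W=\hat{G}\setminus\{q(\sigma_\ast)\}$ one gets $A=\sigma_\ast$, whose closure in $\beta G$ also contains the centre vertex by Proposition~\ref{componentGraphInteraction}, so Lemma~\ref{magicLemma} cannot be applied to $A$ as it stands. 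The repair is short: replace $A$ by $\closureIn{A}{\beta G}$. Since $q^{-1}(W)\cap G^\ast$ is open in the subspace topology that $G^\ast$ inherits from $\beta G$ (the $G$-open basic sets of $\widehat{\beta G}$ miss $G^\ast$), we have $\closureIn{G^\ast\setminus q^{-1}(W)}{\beta G}\cap G^\ast=G^\ast\setminus q^{-1}(W)$, so passing to the closure adds only points of $G$; hence $\closureIn{A}{\beta G}$ is still ${\G*sim}$-closed and still avoids $\tau_\ast$, Lemma~\ref{magicLemma} applies to it, and the rest of your argument goes through unchanged.
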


We define a bijection $\Psi\colon\hat{G}\to\modG$ by letting it be the identity on $G$ and letting it send each ${\G*sim}$-class $\tau_\ast$ to its corresponding $\aleph_0$-tangle $\tau$.

\begin{lemma}
\label{PsiCts}
The map $\Psi$ is continuous.
\end{lemma}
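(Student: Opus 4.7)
The plan is to verify continuity of $\Psi$ by examining the preimage of each basic open set of $\modG$. A basis for the topology of $\modG$ consists of the open sets of $G$ together with the sets $\cO_{\modG}(X,\cC)$ for $X\in\cX$ and $\cC\subseteq\cC_X$. Since $\Psi$ restricts to the identity on $G$, and the open sets of $G$ are by definition among the basic open sets of $\hat G$ (Lemma~\ref{GhatBasis}), preimages of open subsets of $G$ are trivially open in $\hat G$. The substance of the lemma is therefore to show that $\Psi^{-1}\bigl(\cO_{\modG}(X,\cC)\bigr)$ is open in $\hat G$.

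The key step I would establish is the set-theoretic identity
\[
\Psi^{-1}\bigl(\cO_{\modG}(X,\cC)\bigr) \;=\; q\bigl(\cO_{\beta G}(X,\cC)\bigr),
\]
where $q\colon\widehat{\beta G}\to\hat G$ denotes the quotient map. I would split the analysis into the $G$-part and the remainder. For the $G$-part, one computes $\cO_{\beta G}(X,\cC)\cap G = G[X,\cC]\setminus G[X] = \bigcup\cC\cup\mathring E(X,\bigcup\cC) = \cO_{\modG}(X,\cC)\cap G$; this amounts to careful bookkeeping of which vertices and inner edge points lie in the respective subgraphs, using that $G[X,\cC]$ is closed in the 1-complex $G$ (so its closure in $\beta G$ meets $G$ in itself). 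For the remainder, I would invoke the observation recorded just before the statement of Lemma~\ref{magicLemma}: the set $\cO_{\beta G}(X,\cC)$ is ${\G*sim}$-closed, and it contains precisely those components $\tau_\ast$ of $G^\ast$ for which $\cC\in U(\tau,X)$, which by definition of $\Psi$ is equivalent to $\tau\in\cO_{\modG}(X,\cC)$.

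Granting this identity, openness of $\Psi^{-1}\bigl(\cO_{\modG}(X,\cC)\bigr)$ in $\hat G$ follows at once: $\cO_{\beta G}(X,\cC)$ is open in $\beta G$, and hence in $\widehat{\beta G}$, by its very definition (as noted directly after that definition), and it is ${\G*sim}$-closed; therefore its image under $q$ is open in $(\beta G)/{\G*sim}$, and thus open in $\hat G$ by Lemma~\ref{GhatBasis}. I do not anticipate any genuine obstacle here; the only mildly delicate point is to keep straight the two different topologies feeding into the basis of $\hat G$, and not to conflate $\closureIn{G[X,\cC]}{\beta G}$ with the subgraph $G[X,\cC]$ when intersecting with $G$.
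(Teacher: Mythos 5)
Your proposal is correct and follows essentially the same route as the paper: the paper's proof likewise reduces to the basic open sets $\cO_{\modG}(X,\cC)$ and records the identity $\Psi^{-1}\big(\cO_{\modG}(X,\cC)\big)=\cO_{\beta G}(X,\cC)/{\G*sim}$, whose openness follows from $\cO_{\beta G}(X,\cC)$ being open and ${\G*sim}$-closed. You merely spell out the verification of that identity (the $G$-part and the remainder part) which the paper leaves implicit.
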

\begin{proof}
Since the open sets of $G$ are open in both $\modG$ and $\hat{G}$, it suffices to show that the preimage of any $\cO_{\modG}(X,\cC)$ is open in $\hat{G}$, and it is:
\begin{align*}
\Psi^{-1}\big(\cO_{\modG}(X,\cC)\big)=\cO_{\beta G}(X,\cC)/{\G*sim}.&\qedhere
\end{align*}
\end{proof}

\begin{lemma}
\label{PsiClosed}
The map $\Psi$ is closed.
\end{lemma}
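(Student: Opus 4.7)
The plan is as follows. Let $A\subset\hat G$ be closed; I want to show that for every $x\in\modG\setminus\Psi(A)$ there is an open neighbourhood of $x$ in $\modG$ that avoids $\Psi(A)$. Write $q\colon\widehat{\beta G}\to\hat G$ for the quotient map and set $A':=q^{-1}(A)$. Then $A'$ is closed in $\widehat{\beta G}$ and, since $A$ consists of vertices, inner edge points, and whole ${\G*sim}$-classes, $A'$ is ${\G*sim}$-saturated.

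If $x\in G$, then $x\notin A$; because $G$ is open in both $\hat G$ and $\modG$ with matching subspace topology, $G\setminus A$ is already an open neighbourhood of $x$ in $\modG$ disjoint from $\Psi(A)$. The real work is the case $x=\tau\in\Theta$. Here I would like to apply Lemma~\ref{magicLemma}, but this requires a set closed in $\beta G$, whereas $A'$ is only closed in the finer topology $\widehat{\beta G}$. My plan is therefore to replace $A'$ by $B:=\closureIn{A'}{\beta G}$ and verify that $B$ still enjoys the hypotheses of Lemma~\ref{magicLemma} relative to $\tau$.

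The key technical step is the claim $B\cap G^\ast=A'\cap G^\ast$. The point is that the extra opens added to form $\widehat{\beta G}$ all lie inside $G$, so any basic $\widehat{\beta G}$-neighbourhood $U\cup V$ of a point $y\in G^\ast$ (with $U$ open in $\beta G$, $V$ open in $G$) must have $y\in U$; hence the $\widehat{\beta G}$- and $\beta G$-neighbourhood bases at $y$ coincide. Consequently taking $\beta G$-closure adds no points of $G^\ast$ to $A'$, so $B\cap G^\ast=A'\cap G^\ast$ is a union of components of $G^\ast$, making $B$ tame (in fact ${\G*sim}$-closed). Since $\tau\notin\Psi(A)$ gives $\tau_\ast\cap A'=\emptyset$, we get $\tau_\ast\cap B=\emptyset$.

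Now Lemma~\ref{magicLemma} supplies $X\in\cX$ and $\cC\subset\cC_X$ with $\tau_\ast\subset\cO_{\beta G}(X,\cC)\subset\beta G\setminus B\subset\beta G\setminus A'$. By the identity $\Psi^{-1}(\cO_{\modG}(X,\cC))=\cO_{\beta G}(X,\cC)/{\G*sim}$ recorded in the proof of Lemma~\ref{PsiCts}, together with the fact that $\cO_{\beta G}(X,\cC)$ is itself ${\G*sim}$-saturated (its intersection with $G^\ast$ is a $P_\ast$-class, Lemma~\ref{finitePartitionOfGast}), disjointness from $A'$ passes through the quotient to $\cO_{\modG}(X,\cC)\cap\Psi(A)=\emptyset$, giving the desired neighbourhood of $\tau$. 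I expect the only subtle point to be the verification that $B\cap G^\ast=A'\cap G^\ast$; once that topological observation is in hand the rest is just careful translation through $\Psi$.
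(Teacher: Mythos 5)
Your proof is correct and follows essentially the same route as the paper: handle points of $G$ directly, and for a tangle $\tau$ pass to a $\beta G$-closed, ${\G*sim}$-closed superset of (the trace of) $A$ that still avoids $\tau_\ast$, then invoke Lemma~\ref{magicLemma} and translate the resulting $\cO_{\beta G}(X,\cC)$ through $\Psi$. The only cosmetic difference is that you take the $\beta G$-closure of $q^{-1}(A)$ upstairs, whereas the paper closes $A$ in $(\beta G)/{\G*sim}$ and then passes to the union of its classes; your key observation that this closure adds no points of $G^\ast$ is exactly the paper's step ``$B\setminus A\subset G$''.
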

\begin{proof}
Let $A$ be any closed subset of $\hat{G}$; we show that $\Psi[A]$ is closed in $\modG$.
For this, let $\xi$ be any point of $\modG\setminus\Psi[A]$, and let $\cB$ be the basis for the topology of $\hat{G}$ provided by Lemma~\ref{GhatBasis}.

If $\xi$ is a point of $G$, then we find an open neighbourhood $O$ of $\xi$ in $G$ avoiding $A$ since $A$ is closed in $\hat{G}$.
Then $O$ witnesses $\xi\notin\closure{\Psi[A]}$ as well.

Otherwise $\xi$ is an $\aleph_0$-tangle $\tau\in\Theta\setminus\Psi[A]$.
The set $A$ is closed in $\hat{G}$, but it need not be closed in $(\beta G)/{\G*sim}$. 
Let us consider the closure $B$ of $A$ in $(\beta G)/{\G*sim}$ and show $B\setminus A\subset G$ (actually, one can even show that $B$ adds only some vertices of infinite degree to $A$, but $B\setminus A\subset G$ suffices for our cause).
Each point of $\hat{G}\setminus G$ that is not contained in $A$ has an open neighbourhood from the basis $\cB$ avoiding $A$. 
Since all these neighbourhoods are not included in $G$, they must be open sets of $(\beta G)/{\G*sim}$, yielding $B\setminus A\subset G$.
Therefore, the closed set $B'=\bigcup B$ of $\beta G$ avoids the component $\tau_\ast$ of $G^\ast$ corresponding to $\tau$, and since $B'$ is also ${\G*sim}$-closed our Lemma~\ref{magicLemma} 
yields $X\in\cX$ and $\cC\subset\cC_X$ such that $\tau_\ast\subset\cO_{\beta G}(X,\cC)\subset\beta G\setminus B'$.
Therefore, the open neighbourhood $\cO_{\modG}(X,\cC)$ of $\tau$ avoids $\Psi[A]$.
\end{proof}

\begin{customthm}{\ref{mainresult2}}
The tangle compactification $\modG$ of any graph $G$ is homeomorphic to the quotient $(\beta G, \tau') /{\G*sim}$ where $\tau'$ is the finer topology on $\beta G$ obtained from $\beta G$ by declaring $G$ to be open in $\beta G$ and then collapsing each connected component of $G^*$ to a single point. 
\end{customthm}
\begin{proof}
Lemma~\ref{PsiCts} and Lemma~\ref{PsiClosed} yield a homeomorphism.
\end{proof}

\section{Three observations about the Stone-\texorpdfstring{\v{C}}{C}ech compactification}
\label{sec7}

Given $\M_E$ and an ultrafilter $U\in \beta E$ we write $P_U$ for the collection of all points of $\uI_U$ that are of the form $x_U$ for some family $(\,x_e\mid e\in E\,)$ of points $x_e\in\uI_e$.
By \cite[Proposition~2.6]{hart}, the set $P_U\setminus\{0_U,1_U\}$ is dense in $\uI_U$.

\begin{theorem}
If $G$ is an infinite graph that is not locally finite, then no \comp\ of $G$ can both be \HD\ and have a totally disconnected remainder.
\end{theorem}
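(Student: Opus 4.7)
The plan is a contradiction argument: assume $\gamma G$ is a Hausdorff compactification of $G$ whose remainder $R := \gamma G \setminus G$ is totally disconnected, and derive a contradiction from the existence of a non-locally-finite vertex. First I would fix a vertex $v \in V(G)$ of infinite degree, enumerate a countably infinite family $e_1, e_2, \ldots$ of distinct edges at $v$, and write $v_n$ for the midpoint of $e_n$. For a free ultrafilter $\mathcal U$ on $\N$, set $p := \ulim{\mathcal U} v_n \in \gamma G$. A short case analysis using the explicit neighbourhood basis of the $1$-complex (inner edge point versus vertex) together with the simplicity of $G$ (which caps at $1$ the number of edges from $v$ to any other given vertex) shows that no point of $G$ can be an accumulation point of the midpoints of pairwise distinct edges at $v$; hence $p \in R$.

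Next, invoking Hausdorffness of $\gamma G$, pick disjoint open sets $U \ni v$ and $W \ni p$. For each $n$ in the $\mathcal U$-large set $J := \{n : v_n \in W\}$, the connected arc $[v, v_n] \subset e_n$ meets both $U$ (at $v$) and $W$ (at $v_n$), so it must contain some $q_n \in [v, v_n] \setminus (U \cup W)$. Setting $q := \ulim{\mathcal U} q_n$ would then place $q$ in the closed set $\gamma G \setminus (U \cup W)$, immediately giving $q \neq v$ and $q \neq p$; combined with the analogue of the case analysis above applied to $\{q_n\} \subset \bigcup_n \mathring e_n$---whose only gap was the possibility $q = v$, now excluded---this would force $q \in R$. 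So $p$ and $q$ would be distinct points of $R$.

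To contradict total disconnectedness of $R$ I would construct a non-degenerate continuum in $R$ joining $p$ and $q$. Parametrising each subarc $[q_n, v_n] \subset e_n$ linearly as $[0, 1]$ gives continuous maps $f_n \colon [0,1] \to G$ with $f_n(0) = q_n$ and $f_n(1) = v_n$; these assemble into a continuous map $f \colon Y := \bigsqcup_{n \in J} [0, 1] \to \gamma G$ that extends to $\beta f \colon \beta Y \to \gamma G$ by the universal property of Stone-\v Cech. By Lemma~\ref{componentsdescription}, the connected components of $Y^*$ are precisely the non-standard intervals $Y_\mathcal U$, each of which is a continuum (cf.\ the discussion around Corollary~\ref{openIntervalConnected}). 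Since $\beta f$ preserves ultrafilter limits, $\beta f(Y_\mathcal U)$ would be a continuum in $\gamma G$ containing $\beta f(1_\mathcal U) = p$ and $\beta f(0_\mathcal U) = q$; applying the accumulation argument one last time to the points $f_n(y_n) \in \mathring e_n$ in pairwise distinct edges would confirm $\beta f(Y_\mathcal U) \subseteq R$, yielding the sought contradiction.

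The main obstacle is the second step, namely placing $q$ inside $R \setminus \{p, v\}$: it is precisely the Hausdorffness of $\gamma G$ that allows us to sandwich $q$ into the neutral region $\gamma G \setminus (U \cup W)$ and thereby keep both the ``non-degeneracy'' and the ``contained in $R$'' requirements of the final continuum mutually compatible. The rest of the argument---the two accumulation arguments and the ultrafilter-limit continuum---runs on standard machinery already introduced in Section~\ref{sec_beta1}.
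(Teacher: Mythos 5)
Your argument is correct in outline but takes a genuinely different route from the paper, and its final step needs a small repair. The paper's proof works with the quotient description $\beta G=(\beta\M_E)/{\bGsim}$ of Theorem~\ref{lem_stonecechquotient}: it picks a free ultrafilter $U$ on $E$ concentrating on $E(v)$, so that $0_U$ lies in the class of $v$, notes that any Hausdorff compactification is a quotient $\beta G/{\approx}$ whose relation must refine the component relation on $G^\ast$ if the remainder is to be totally disconnected, so that the connected set $\check{\uI}_U$ collapses to a single remainder point $x$, and then uses density of $P_U\setminus\{0_U,1_U\}$ in $\uI_U$ to conclude that \emph{every} neighbourhood of $v$ contains $x$ --- the contradiction is with Hausdorffness. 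You instead keep $\gamma G$ abstract and attack total disconnectedness head-on: Hausdorffness is spent once, to separate $v$ from $p=\ulim{\mathcal{U}}\,v_n$, which yields the points $q_n\in[v,v_n]\setminus(U\cup W)$ and a second remainder point $q\neq p$, and the non-standard interval of the auxiliary sum $Y=\bigsqcup_{n\in J}[q_n,v_n]$ then supplies a non-degenerate subcontinuum of the remainder through $p$ and $q$. This is arguably more elementary, since it avoids both the quotient description of $\gamma G$ and the density of $P_U$; the steps identifying $p$ and $q$ as remainder points and producing $q_n$ by connectedness of the arcs are all sound.

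The step that does not work as written is the very last one: you cannot establish $\beta f(Y_{\mathcal{U}})\subseteq R$ by applying the accumulation argument ``to the points $f_n(y_n)$'', because points of the form $\ulim{\mathcal{U}}\,(f_n(y_n))$ make up only the dense subset $\beta f(P_{\mathcal{U}}\cap Y_{\mathcal{U}})$ of $\beta f(Y_{\mathcal{U}})$ (cf.\ \cite[Proposition~2.6]{hart}), and $R$ is \emph{not} closed in $\gamma G$ here (as $G$ is not locally compact at $v$), so density does not propagate the containment to the whole continuum. The fix is short: each set $A_N:=\bigcup_{n\in J,\,n>N}[q_n,v_n]$ is closed in the 1-complex $G$, since the per-edge cut-offs $i_e$ in the neighbourhood basis at $v$ may be chosen between $v$ and $q_n$ and every other point of $G$ has a neighbourhood meeting at most one arc $[q_n,v_n]$; hence $\closureIn{A_N}{\gamma G}\cap G=A_N$. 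Since $Y_{\mathcal{U}}\subset Y^\ast\subset\closureIn{\bigsqcup_{n\in J,\,n>N}[0,1]_n}{\beta Y}$ for every $N$, continuity gives $\beta f(Y_{\mathcal{U}})\subset\bigcap_{N}\closureIn{A_N}{\gamma G}$, and this intersection meets $G$ in $\bigcap_N A_N=\emptyset$. With that patch your proof is complete.
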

\begin{proof}
Suppose for a contradiction that $\alpha G$ is a \HDcomp\ of $G$ with totally disconnected remainder, and let $v$ be a vertex of $G$ of infinite degree.
Consider a representation $\M_E / {\Gsim}$ of $G$, so Theorem~\ref{lem_stonecechquotient} yields $\beta G=(\beta\M_E)/{\bGsim}$ and we find a free ultrafilter $U\in E^\ast$ with $[0_U]_{\bGsim}=v$, say.
The set $P_U\setminus\{0_U,1_U\}$ is dense in $\uI_U$, so every open neighbourhood of $v$ in $\beta G$ meets $P_U\setminus\{0_U,1_U\}$.
In order to use this to derive a contradiction, we need to know more about $\alpha G$ first.

The \HDcomp\ $\alpha G$ can be obtained from $\beta G$ as a quotient $\beta G/{\approx}$ where ${\approx}$ is an equivalence relation on $G^\ast$.
Since $\alpha G$ has a totally disconnected remainder and since the (continuous) restriction of the quotient map to components of $G^\ast$ preserves connectedness, we deduce that the equivalence relation ${\approx}$ must refine ${\G*sim}$.
Consequently, the connected subspace $\check{\uI}_U$ of $G^\ast$ (cf.~Corollary~\ref{openIntervalConnected}) is included in a single ${\approx}$-class $x$, say.
To yield a contradiction, it suffices to show that every open neighbourhood $O$ of $v$ in $\alpha G$ contains $x$.
And indeed: if we view $\alpha G$ as the quotient $(\beta G)/{\approx}$ of $\beta G$, then $\bigcup O$ is open in $\beta G$ and ${\approx}$-closed. 
Using that $\bigcup O$ meets $P_U\setminus\{0_U,1_U\}$ and ${\approx}$ refines ${\G*sim}$ we deduce that $x\subset\bigcup O$, i.e. $x\in O$.
\end{proof}

For our the second observation we need a short lemma and some notation:
Since $G$ is dense in $\beta G$, so is the locally compact subspace formed by the inner edge points and the vertices of finite degree, and hence \cite[Theorem~3.3.9]{EngelkingBook} yields:

\begin{lemma}\label{innerEdgesOpen}
If $G$ is a graph, then $\mathring{E}\subset G$ is open in $\beta G$.\qed
\end{lemma}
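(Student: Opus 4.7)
The plan is to apply Engelking's characterization that every locally compact dense subspace of a Hausdorff space is open (\cite[Theorem~3.3.9]{EngelkingBook}) to an intermediate subspace sandwiched between $\mathring{E}$ and $\beta G$, following the hint in the paragraph preceding the lemma.

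First I would introduce the subspace $Y \subset G$ consisting of all inner edge points together with all vertices of $G$ of finite degree. The key observation is that once $Y$ is shown to be open in $\beta G$, the lemma follows at once: $\mathring{E}$ is open in the $1$-complex $G$, hence open in $Y = Y \cap G$ with the subspace topology, and therefore open in $\beta G$ since $Y$ is.

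Next I would verify that $Y$ is locally compact. At each point of $\mathring{E}$, a short open subinterval of the ambient edge provides a neighborhood in $Y$ with compact closure (a closed subinterval, entirely inside $\mathring{E}$). At a vertex $v \in Y$ of finite degree, a standard basic star-neighborhood $\bigcup_{e \in E(v)}[v, i_e)$ has compact closure $\bigcup_{e \in E(v)}[v, i_e]$, which is again contained in $Y$ because $E(v)$ is finite. This is precisely the step that forces us to include the vertices of finite degree (but no others): a vertex of infinite degree admits no compact neighborhood in $G$, since any neighborhood contains arbitrarily small half-open intervals along infinitely many edges.

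Finally I would check density of $Y$ in $\beta G$. Since $G$ is dense in $\beta G$ by definition of a compactification, it suffices to see that $Y$ is dense in $G$; and this is immediate because every basic open neighborhood of a vertex of infinite degree $v$ contains inner edge points along each incident edge, while our fixed $G$ is connected and infinite so has no isolated vertices to worry about. Engelking's theorem now yields that $Y$ is open in $\beta G$, and the first paragraph concludes the proof. There is no significant obstacle in the argument; the only real content is choosing $Y$ as the largest subspace of $G$ on which the $1$-complex topology is locally compact.
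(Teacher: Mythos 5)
Your proposal is correct and is essentially the paper's own argument: the paper likewise observes that the subspace of inner edge points together with the vertices of finite degree is locally compact and dense in $\beta G$, and then invokes \cite[Theorem~3.3.9]{EngelkingBook} to conclude openness. You have merely written out the local-compactness and density checks that the paper leaves implicit.
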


Given an end $\omega$ of $G$ we write $\Delta \omega$ for the set of those vertices dominating it.
Our second observation describes explicitly how the connected components of the \SC\ remainder of $G$ interact with $G$.

\begin{proposition}\label{componentGraphInteraction}
Let $G$ be any graph, and let $\M_E/{\Gsim}$ be a representation of $G$.
\begin{enumerate}
\item If $\tau$ is an ultrafilter tangle of $G$, then $\closureIn{ \tau_\ast }{\beta G}=\tau_\ast\sqcup X_\tau$, and for each $x\in X_\tau$ there is an ultrafilter $U\in E^\ast$ with $[0_U]_{\bGsim}= x$, say, and with $\check{\uI}_U\subset \tau_\ast$.
\item If $\omega$ is an end of $G$, then $\closureIn{ \omega_\ast }{\beta G}=\omega_\ast\sqcup\Delta\omega$, and for each $x\in \Delta\omega$ there is an ultrafilter $U\in E^\ast$ with $[0_U]_{\bGsim} =x$, say, and with $\check{\uI}_U\subset \omega_\ast$.
\end{enumerate}
\end{proposition}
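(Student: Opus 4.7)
To prove (i), I will verify three claims: (a) $\closureIn{\tau_\ast}{\beta G}\subset\tau_\ast\sqcup V$; (b) $\closureIn{\tau_\ast}{\beta G}\cap V\subset X_\tau$; and (c) for each $v\in X_\tau$ there exists a free ultrafilter $U\in E^\ast$ with $E(v)\in U$ and $\check{\uI}_U\subset\tau_\ast$, yielding $X_\tau\subset\closureIn{\tau_\ast}{\beta G}$. Claim (a) is immediate from Lemma~\ref{innerEdgesOpen}: $\mathring{E}$ is open in $\beta G$ and disjoint from $G^\ast\supset\tau_\ast$. Claim (b) is a direct separation: given $v\in V\setminus X_\tau$, let $C^v$ be its component in $\cC_{X_\tau}$; freeness of $U(\tau,X_\tau)$ gives $\cC_{X_\tau}\setminus\{C^v\}\in U(\tau,X_\tau)$, so by Lemma~\ref{finitePartitionOfGast} the open set $\cO_{\beta G}(X_\tau,\{C^v\})$ is a neighbourhood of $v$ disjoint from the open set $\cO_{\beta G}(X_\tau,\cC_{X_\tau}\setminus\{C^v\})\supset\tau_\ast$.

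For (c), fix $v\in X_\tau$ and for each $\gamma=(X,P)\in\Gamma$ with $v\in X\supset X_\tau$ set
\[F_\gamma:=\{\,e\in E(v)\mid\text{the opposite end of }e\text{ lies in }V[\cC(\tau,\gamma)]\,\}.\]
I aim to extend $\{E(v)\}\cup\{F_\gamma\}$, together with the Fréchet filter on $E$, to a free ultrafilter $U$. Three ingredients make this possible. First, $\{\gamma\in\Gamma:X(\gamma)\supset X_\tau\}$ is cofinal in $\Gamma$. Second, whenever $\gamma'\ge\gamma$ with $X(\gamma')\supset X(\gamma)\supset X_\tau$, the inclusion $\cC(\tau,\gamma')\subset\cb_{X',X}^{-1}[\cC(\tau,\gamma)]$ forces $F_{\gamma'}\subset F_\gamma$, so by directedness of $\Gamma$ any finite intersection of the $F_\gamma$'s contains some single $F_\gamma$. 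Third, and decisively, each $F_\gamma$ is infinite. To see this, I claim $\{\,C\in\cC_X\mid v\in N(C)\,\}\in U(\tau,X)$ whenever $v\in X\supset X_\tau$; intersecting with $\cC(\tau,\gamma)\in U(\tau,X)$ then yields infinitely many components of $\cC(\tau,\gamma)$ adjacent to $v$, each contributing a distinct edge to $F_\gamma$. The claim itself follows from minimality of $X_\tau$: setting $Y=X\setminus\{v\}$, which misses a vertex of $X_\tau$, forces $U(\tau,Y)$ to be principal at some $C_Y\in\cC_Y$. Were $v\notin C_Y$, then $\cb_{X,Y}^{-1}(\{C_Y\})=\{C_Y\}\in U(\tau,X)$ would contradict freeness of $U(\tau,X)$; hence $v\in C_Y$, and connectivity of $C_Y$ makes every component of $C_Y-v$ adjacent to $v$, exhibiting $\cb_{X,Y}^{-1}(\{C_Y\})\subset\{C\mid v\in N(C)\}$.

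Given such $U$, verification that $\check{\uI}_U\subset\tau_\ast$ follows from a tangle-identification argument. The arc $\check{\uI}_U\subset G^\ast$ is connected by Corollary~\ref{openIntervalConnected}, so lies in a single component $\tau'_\ast$ of $G^\ast$, corresponding to some $\aleph_0$-tangle $\tau'$ via Theorem~\ref{mainresult1}. For every $\gamma=(X,P)$ with $X\supset X_\tau$, $F_\gamma\in U$ gives $\check{\uI}_U\subset\closureIn{G[\tau,\gamma]}{\beta G}$, and clopenness of the partition $P(\gamma)_\ast$ forces the whole component $\tau'_\ast$ into $\closureIn{G[\tau,\gamma]}{\beta G}$; varying the bipartition $P$ of $\cC_X$ shows $U(\tau',X)=U(\tau,X)$. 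Cofinality of $\{X\supset X_\tau\}$ in $\cX$ plus compatibility of ultrafilters under bonding maps forces $\tau'=\tau$, so $\check{\uI}_U\subset\tau_\ast$. Finally, $E(v)\in U$ ensures that one of $0_U,1_U$ is $\bGsim$-equivalent to $v$, giving $v\in\closureIn{\check{\uI}_U}{\beta G}\subset\closureIn{\tau_\ast}{\beta G}$.

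For (ii), the same three-step template works with $X_\tau$ replaced by $\Delta\omega$. The substitute for minimality of $X_\tau$ in proving infinitude of $F_\gamma$ is the classical fan characterization: $v\in\Delta\omega$ admits infinitely many internally disjoint $v$--$R$-paths for some ray $R\in\omega$, from which a pigeonhole argument yields $|N(v)\cap C(X,\omega)|=\infty$ for every finite $X\ni v$. Conversely, $v\notin\Delta\omega$ furnishes a finite $X\ni v$ with $v\notin N(C(X,\omega))$; then for every free $U\in E^\ast$ with $E(v)\in U$, the edges $e\in E(v)$ whose interior lies in $G[X,\{C(X,\omega)\}]$ must go into the finite set $X\setminus\{v\}$ and therefore do not lie in $U$, so $\check{\uI}_U\not\subset\closureIn{G[X,\{C(X,\omega)\}]}{\beta G}\supset\omega_\ast$; connectivity forces $O_{\beta G}(v)\cap\omega_\ast=\emptyset$, separating $v$ from $\omega_\ast$. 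The main obstacle throughout is establishing infinitude of $F_\gamma$: it requires converting the combinatorial hypothesis on $v$ (membership in $X_\tau$, respectively $\Delta\omega$) into concrete adjacency information, which in (i) demands the careful minimality argument above and in (ii) uses the fan characterization rather than the bare definition of a dominating vertex.
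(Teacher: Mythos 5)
Your proof is correct and follows the same overall strategy as the paper's: you build the filter generated by the sets $F_\gamma=E(v,\bigcup\cC(\tau,\gamma))$, extend it to a free ultrafilter $U$ with $[0_U]_{\bGsim}=v$, and show that $\check{\uI}_U$ lies in $\closureIn{G[\tau,\gamma]}{\beta G}$ for cofinally many $\gamma$ and hence in $\tau_\ast$; the exclusion of vertices outside $X_\tau$ (resp.\ $\Delta\omega$) is likewise the paper's separation argument via Theorem~\ref{tangleComponents}. The one genuine divergence is the infinitude of the sets $F_\gamma$: the paper outsources this to \cite[Lemma~4.4]{EndsTanglesCrit}, whereas you derive it from the fact that $\cX_\tau$ is the up-closure of $X_\tau$ --- so that $U(\tau,X\setminus\{v\})$ is principal at some $C_Y$, which must contain $v$ lest $\{C_Y\}$ pull back to a principal generator of the free ultrafilter $U(\tau,X)$, and every component of $C_Y-v$ is then adjacent to $v$ --- which makes the argument self-contained; this minimality argument is sound. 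In (ii) you substitute the fan characterisation of domination for the analogous fact, which works; your converse direction there (taking $X\ni v$ and excluding $\omega_\ast$ from the whole of $O_{\beta G}(v)$ ultrafilter by ultrafilter) is more laborious than the paper's, which simply chooses a finite $X$ with $v\notin X\cup C(X,\omega)$ so that $v\notin\closureIn{G[X,\{C(X,\omega)\}]}{\beta G}\supset\closureIn{\omega_\ast}{\beta G}$, but it is also correct.
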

\begin{proof}
(i). First, we show that $\closureIn{\tau_\ast}{\beta G}$ avoids $G\setminus X_\tau$ (where $G$ is the 1-complex).
Since $\mathring{E}$ is open in $\beta G$ (Lemma~\ref{innerEdgesOpen}) we may assume that $\closureIn{\tau_\ast}{\beta G}\cap G\subset V$.
Let $v$ be any vertex of $G$ that is not in $X_\tau$, and let $C$ be the (graph) component of $G-X_\tau$ with $v\in C$. Then $v\notin\closure{ G[X_\tau,\cC_{X_\tau}\setminus \{C\}]}$ implies $v\notin\closureIn{\tau_\ast}{\beta G}$ by Theorem~\ref{tangleComponents} as desired.
Therefore, $\closureIn{\tau_\ast}{\beta G}\cap G\subset X_\tau$.

Now suppose that any vertex $x\in X_\tau$ is given.
Write $\Gamma_x$ for the set of all $\gamma\in\Gamma$ with $x\in X(\gamma)$, and given $\gamma\in\Gamma_x$ put $F_\gamma=E(x,\bigcup\cC(\tau,\gamma))$.
The sets $F_\gamma$ are infinite due to~\cite[Lemma~4.4]{EndsTanglesCrit}.
We consider the filter on $E(x)$ that is given by the up-closure of the collection
$\{\,F_\gamma\mid \gamma\in\Gamma_x\,\}\subset 2^{E(x)}$
(from the directedness of $\Gamma_x$ it follows that this collection is directed by reverse inclusion, which is enough to ensure that we get a filter).
Next, we extend this filter to an ultrafilter $U$ on $E(G)$, and note that $U$ must be free.
Due to $E(x)\in U$ we may assume without loss of generality that there is some $F\in U$ with $F\subset E(x)$ and $\{\,0_e\mid e\in F\,\}\subset x$ where we view $x$ as a ${\Gsim}$-class of $\M_E$.
Then $0_U\in\closureIn{\{\,0_e\mid e\in F\,\}}{\beta\M_E}$ implies $[0_U]_{\bGsim}=x$ as a consequence of $\beta G=(\beta\M_E)/{\bGsim}$, Theorem~\ref{lem_stonecechquotient}.
If we can show that $\check{\uI}_U$ is included in $\closureIn{ G[\tau,\gamma] }{\beta G}$ for all $\gamma\in\Gamma_x$, then we are done since $\Gamma_x$ is cofinal in $\Gamma$ and $\tau_\ast$ can be written as the directed intersection $G^\ast\cap\bigcap_{\gamma\in\Gamma}\closure{ G[\tau,\gamma] }$ (cf.~Theorem~\ref{tangleComponents}).
For this, let any $\gamma\in\Gamma_x$ be given.
Since $\check{\uI}_U\subset G^\ast$ is connected (cf.~Corollary~\ref{openIntervalConnected}) and $G^\ast\cap\closure{ G[\tau,\gamma] }$ is clopen in $G^\ast$ (cf.~Lemma~\ref{finitePartitionOfGast}), it suffices to show that $\check{\uI}_U$ meets $\closure{ G[\tau,\gamma] }$ in $(\frac{1}{2})_U$.
And indeed we have
\begin{align*}
(\tfrac{1}{2})_U\in\closureIn{\{\,(\tfrac{1}{2})_e\mid e\in F_\gamma\,\}}{\beta\M_E}
\end{align*}
which implies $(\frac{1}{2})_U\in\closure{ G[\tau,\gamma] }$ as desired.

(ii). This is proved similar to (i), where to show $\closureIn{\omega_\ast}{\beta G}\cap G\subseteq\Delta\omega$ we use that for every vertex $v$ of $G$ not dominating $\omega$ there is $X\in \cX$ separating $v$ from $C(X,\omega)$ in that $v\notin X\cup C(X,\omega)$ so in particular $v\notin\closureIn{ G[X,\{C(X,\omega)\}]}{\beta G}\supset\closureIn{\omega_\ast}{\beta G}$.
\end{proof}

For the study of locally finite connected graphs, the so-called \emph{Jumping Arc Lemma} (cf.~\cite[Lemma~8.5.3]{Bible}) plays an important role.
By considering subcontinua of the \SC\ \comp\ instead of arcs in the Freudenthal \comp , we obtain the following quite strong generalisation of this lemma:

\begin{lemma}[Jumping `Arc' Lemma for the \SC\ \comp ]\ \\
Let $F\subset E$ be a cut of $G$ with sides $V_1,V_2$.
\begin{enumerate}
\item If $F$ is finite, then $\closure{G[V_1]}\oplus\closure{G[V_2]}$ is a clopen bipartition of $(\beta G)\setminus\mathring{F}$, and there is no subcontinuum of $(\beta G)\setminus\mathring{F}$ meeting both $V_1$ and $V_2$.
\item If $F$ is infinite, then $(\beta G)\setminus\mathring{F}$ might contain a subcontinuum meeting both $V_1$ and $V_2$.
This is the case, e.g., if both $G[V_1]$ and $G[V_2]$ are connected.
\end{enumerate}
Moreover, two vertices of $G$ lie in the same component (subcontinuum) of $(\beta G)\setminus\mathring{E}$ if and only if they lie on the same side of every finite cut of the graph $G$.
\end{lemma}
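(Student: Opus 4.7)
My plan has three stages, handling assertions (i), (ii), and the final equivalence in turn.

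For (i), the idea is to use that the 1-complex $G$ is normal together with the disjointness of the closed subgraphs $G[V_1]$ and $G[V_2]$: Theorem~\ref{thm_stonecechchar}~(v) gives $\closureIn{G[V_1]}{\beta G}\cap\closureIn{G[V_2]}{\beta G}=\emptyset$. Since $F$ is finite, $\mathring{F}$ is open in $\beta G$ with $\closureIn{\mathring{F}}{\beta G}=F\subset G$, and a short density computation using $\beta G=\closureIn{G}{\beta G}$ shows that $\closureIn{G[V_1]}{\beta G}\cup\closureIn{G[V_2]}{\beta G}=\closureIn{G\setminus\mathring{F}}{\beta G}=\beta G\setminus\mathring{F}$. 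This gives the clopen bipartition, and rules out any subcontinuum of $\beta G\setminus\mathring{F}$ meeting both $V_1$ and $V_2$.

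For (ii), I will exhibit a subcontinuum explicitly. Pick any free ultrafilter $U$ on $F$ and, after passing to a member of $U$, orient the edges of $F$ uniformly so that $0_e\in V_1$ and $1_e\in V_2$. Via the canonical quotient $\beta G=(\beta\M_E)/{\bGsim}$, the non-standard interval $\uI_U$ of Section~\ref{sec_beta1} descends to a continuum in $\beta G$ whose inner points lie in $G^{*}$ and whose endpoints lie in $\beta V$, with $0_U\in\closureIn{V_1}{\beta G}\subset\closureIn{G[V_1]}{\beta G}$ and $1_U\in\closureIn{V_2}{\beta G}\subset\closureIn{G[V_2]}{\beta G}$; in particular $\uI_U\subset\beta G\setminus\mathring{E}\subset\beta G\setminus\mathring{F}$. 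Under the hypothesis that $G[V_1]$ and $G[V_2]$ are both connected, Lemma~\ref{closureconnd} upgrades $\closureIn{G[V_i]}{\beta G}$ to continua, so that assembling the three continua
\[
\closureIn{G[V_1]}{\beta G}\,\cup\,\uI_U\,\cup\,\closureIn{G[V_2]}{\beta G}
\]
through the attaching points $0_U$ and $1_U$ yields the desired connecting subcontinuum.

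For the final equivalence, the ``only if'' direction is immediate from (i): a finite cut separating $a$ and $b$ yields, on restriction to the compact subspace $\beta G\setminus\mathring{E}\subset\beta G\setminus\mathring{F}$, a clopen bipartition of $\beta G\setminus\mathring{E}$ that still separates them. For the ``if'' direction I plan to show that \emph{every} clopen bipartition $K_1\oplus K_2$ of $\beta G\setminus\mathring{E}$ comes from a finite cut: setting $V_i:=K_i\cap V$, the edge boundary $F$ between $V_1$ and $V_2$ must be finite. Indeed, were $F$ infinite, the construction of (ii) applied to a free ultrafilter $U\in F^{*}$ would produce a connected set $\uI_U\subset\beta G\setminus\mathring{E}$ with $0_U\in\closureIn{V_1}{\beta G}\subset K_1$ and $1_U\in\closureIn{V_2}{\beta G}\subset K_2$ (the inclusions use $\closureIn{V_i}{\beta G}\subset\beta V\subset\beta G\setminus\mathring{E}$ together with $V_i\subset K_i$ and $K_i$ closed in $\beta G\setminus\mathring{E}$), contradicting connectedness. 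Once $F$ is known to be finite, the hypothesis that $a,b$ lie on the same side of every finite cut places them on the same side of every clopen bipartition of $\beta G\setminus\mathring{E}$, hence in the same quasi-component, which coincides with the component in this compact Hausdorff space.

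The step I expect to be the main obstacle is this last use of non-standard intervals to rule out infinite edge boundaries; it is precisely what part~(i) alone cannot provide, and it echoes the role played by $\uI_U$ in Example~\ref{infinitestar} and throughout Section~\ref{sec5}.
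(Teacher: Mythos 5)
Your proof is correct, but part (ii) and the backward direction of the ``moreover'' clause take a genuinely different route from the paper. For (ii) the paper does not assemble a connecting subcontinuum at all: it observes that when both sides are connected, $F$ is a bond, so $G\setminus\mathring{F}'$ is connected for every finite $F'\subset F$, whence each $(\beta G)\setminus\mathring{F}'=\closureIn{G\setminus\mathring{F}'}{\beta G}$ is a continuum, and $(\beta G)\setminus\mathring{F}$ is then itself a continuum as a directed intersection of these (Lemma~\ref{continuaintersection}) --- a stronger conclusion than the mere existence of a connecting subcontinuum, though less explicit than your $\closureIn{G[V_1]}{\beta G}\cup\uI_U\cup\closureIn{G[V_2]}{\beta G}$ glued at $0_U$ and $1_U$. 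For the ``moreover'' backward direction the paper uses a combinatorial input --- infinitely many pairwise edge-disjoint paths between the two vertices, which exist precisely because no finite cut separates them --- and intersects the closures of their tails to obtain a continuum in $(\beta G)\setminus\mathring{E}$ through both vertices; you instead show that every clopen bipartition of $(\beta G)\setminus\mathring{E}$ traces out a finite cut on $V$ (reusing the non-standard interval $\uI_U$ over a free ultrafilter on the edge boundary to rule out infinite boundaries) and then invoke the coincidence of components and quasi-components in compact Hausdorff spaces. Both arguments are sound; yours avoids the path-finding step, unifies (ii) with the ``moreover'' part, and yields the independently interesting byproduct that clopen subsets of $(\beta G)\setminus\mathring{E}$ correspond exactly to sides of finite cuts, while the paper's route is shorter and, in (ii), establishes connectedness of the entire space $(\beta G)\setminus\mathring{F}$ rather than of a subcontinuum. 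The only points where your sketch should be expanded in a write-up are the routine verifications that the quotient image of $\uI_U$ avoids $\mathring{E}$ and that the orientation of the edges in $F$ can be normalised along a member of $U$, both of which follow from Theorem~\ref{lem_stonecechquotient} and Lemma~\ref{innerEdgesOpen}.
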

\begin{proof}
(i) is immediate from Theorem~\ref{thm_stonecechchar}~(v). For (ii), note that if both $G[V_1]$ and $G[V_2]$ are connected then $F$ is a bond so $(\beta G)\setminus\mathring{F}'=\subclosureIn{G\setminus\mathring{F}'}{\beta G}$ is a continuum for every finite $F'\subset F$ by Lemmas~\ref{closureconnd},~\ref{innerEdgesOpen} and Theorem~\ref{thm_stonecechchar}~(v). 
Hence $(\beta G)\setminus \mathring{F}$ is also a continuum as directed intersection of the continua $(\beta G)\setminus \mathring{F}'$, see Lemma~\ref{continuaintersection}.

Finally, note that, by (i), for the `moreover' part it suffices to show the backward direction.
For this, find infinitely many edge-disjoint paths $P_0,P_1,\hdots$ between the two vertices inductively, and note that by Lemmas~\ref{closureconnd},~\ref{continuaintersection} and~\ref{innerEdgesOpen} the intersection
\begin{align*}
\bigcap_{n\in\N}\closureIn{ \bigcup_{m>n} P_m }{\beta G}\subset (\beta G)\setminus\mathring{E}
\end{align*}
is a continuum containing the two vertices as desired.
\end{proof}


\bibliographystyle{amsplain}
\bibliography{TanglesSC}

\end{document}